\renewcommand{\th}{\textsuperscript{th}\xspace}
\newcommand{\st}{\textsuperscript{st}\xspace}
\newcommand{\rd}{\textsuperscript{rd}\xspace}
\newcommand{\area}{\mathsf{area}}
\newcommand{\dinv}{\mathsf{dinv}}
\newcommand{\tdinv}{\mathsf{tdinv}}
\newcommand{\cdinv}{\mathsf{cdinv}}
\newcommand{\falldinv}{\mathsf{falldinv}}
\newcommand{\shift}{\mathsf{shift}}
\newcommand{\RP}{\mathsf{RP}} %
\newcommand{\LRD}{\mathsf{LRD}} %
\newcommand{\LRP}{\mathsf{LRP}} %
\newcommand{\FL}{\mathsf{FL}} %
\newcommand{\sgn}{\textnormal{sgn}}
\newcommand{\verticalsteps}{\mathcal{V}}
\newcommand{\horizontalsteps}{\mathcal{H}}
\newcommand{\decoratedsteps}{\mathcal{D}}
\newcommand{\bigsteps}{\mathcal{B}}
\newcommand{\southsteps}{\mathcal{S}}
\colorlet{standardcolor}{blue!75}
\definecolor{decoratedcolor}{HTML}{b80026}
\definecolor{bigcolor}{HTML}{ed0031}
\colorlet{southcolor}{black!40}
\colorlet{gridcolor}{gray!40}
\colorlet{areacolor}{gray!15}
\colorlet{diagonalcolor}{gray!60}
\newcommand{\Ht}{\widetilde{H}}
\newcommand{\Z}{\mathbb{Z}}
\def\multiset#1#2{\ensuremath{\left(\kern-.3em\left(\genfrac{}{}{0pt}{}{#1}{#2}\right)\kern-.3em\right)}}
\newcommand{\<}{\langle}
\renewcommand{\>}{\rangle}
\theoremstyle{plain}
\theoremstyle{definition}
\newtheorem{theorem}{Theorem}[section]
\newtheorem{conjecture}[theorem]{Conjecture}
\newtheorem{definition}[theorem]{Definition}
\newtheorem{lemma}[theorem]{Lemma}
\newtheorem{proposition}[theorem]{Proposition}
\theoremstyle{remark}
\newtheorem{remark}[theorem]{Remark}
\DeclareRobustCommand*{\bfseries}{%
	\not@math@alphabet\bfseries\mathbf
	\fontseries\bfdefault\selectfont
	\boldmath
}
\def\paragraph{\@startsection{paragraph}{4}{\z@}%
                                    {3.25ex \@plus1ex \@minus.2ex}%
                                    {-1em}%
                                    {\normalfont\normalsize\bfseries}}
\def\subparagraph{\@startsection{subparagraph}{5}{\parindent}%
                                       {3.25ex \@plus1ex \@minus .2ex}%
                                       {-1em}%
                                      {\normalfont\normalsize\bfseries}}
\newdimen\qrr@tikz@sharp@z@
	\edef\pgf@marshal{\noexpand\pgfutil@in@{and}{\pgfgetarrowoptions{sharp >}}}%
	\edef\pgf@tempa{\pgfgetarrowoptions{sharp >}}
	\qrr@tikz@sharp@parse\pgfgetarrowoptions{sharp >}and-\pgfgetarrowoptions{sharp >}\@qrr@tikz@sharp@parse
	\let\qrr@tikz@sharp@max\pgfmathresult
	\pgfmathsetlength\pgf@xa{.5*\pgflinewidth * tan(\qrr@tikz@sharp@max)}%
	\edef\pgf@marshal{\noexpand\pgfutil@in@{and}{\pgfgetarrowoptions{sharp >}}}%
	\edef\pgf@tempa{\pgfgetarrowoptions{sharp >}}
	\qrr@tikz@sharp@parse\pgfgetarrowoptions{sharp >}and-\pgfgetarrowoptions{sharp >}\@qrr@tikz@sharp@parse
	\pgfmathsetlength\pgf@ya{.5*\pgflinewidth * tan(max(\pgf@tempa,\pgf@tempb,0))}%
	\pgfmathsetlength\pgf@xa{-.5*\pgflinewidth * tan(\pgf@tempa)}%
	\pgfmathsetlength\pgf@xb{-.5*\pgflinewidth * tan(\pgf@tempb)}%
	\edef\pgf@marshal{\noexpand\pgfutil@in@{and}{\pgfgetarrowoptions{sharp <}}}%
	\edef\pgf@tempa{\pgfgetarrowoptions{sharp <}}
\qrr@tikz@sharp@parse\pgfgetarrowoptions{sharp <}and-\pgfgetarrowoptions{sharp <}\@qrr@tikz@sharp@parse
	\let\qrr@tikz@sharp@max\pgfmathresult
	\pgfmathsetlength\pgf@xa{.5*\pgflinewidth * tan(\qrr@tikz@sharp@max)}%
	\edef\pgf@marshal{\noexpand\pgfutil@in@{and}{\pgfgetarrowoptions{sharp <}}}%
	\edef\pgf@tempa{\pgfgetarrowoptions{sharp <}}
\qrr@tikz@sharp@parse\pgfgetarrowoptions{sharp <}and-\pgfgetarrowoptions{sharp <}\@qrr@tikz@sharp@parse
	\pgfmathsetlength\pgf@ya{.5*\pgflinewidth * tan(max(\pgf@tempa,\pgf@tempb,0))}%
	\pgfmathsetlength\pgf@xa{-.5*\pgflinewidth * tan(\pgf@tempa)}%
	\pgfmathsetlength\pgf@xb{-.5*\pgflinewidth * tan(\pgf@tempb)}%
\def\qrr@tikz@sharp@parse#1and#2\@qrr@tikz@sharp@parse{\def\pgf@tempa{#1}\def\pgf@tempb{#2}}
\title[Rectangular paths and Delta conjectures]{Falling stars: a fall-decorated rational shuffle theorem}
\author{Alessandro Iraci \and Roberto Pagaria \and Giovanni Paolini}
\address{Alessandro Iraci \newline \textup{Università Telematica Pegaso, Dipartimento di Ingegneria}\newline Centro Direzionale Isola F2 - Napoli\\ Italy.}
\email{alessandro.iraci@unipegaso.it}
\address{Roberto Pagaria \newline \textup{Università di Bologna, Dipartimento di Matematica}\newline Piazza di Porta San Donato 5 - 40126 Bologna\\ Italy.}
\email{roberto.pagaria@unibo.it}
\address{Giovanni Paolini \newline \textup{Università di Bologna, Dipartimento di Matematica}\newline Piazza di Porta San Donato 5 - 40126 Bologna\\ Italy.}
\email{g.paolini@unibo.it}
\begin{document}

\begin{abstract}
    In this paper, we formulate a rational analog of the fall Delta theorem and the Delta square conjecture.
    We find a new dinv statistic on fall-decorated paths on a $(m+k) \times (n+k)$ rectangle that simultaneously extends the previously known dinv statistics on decorated square objects and non-decorated rectangular objects.
    We prove a symmetric function formula for the $q,t$-generating function of fall-decorated rectangular Dyck paths as a skewing operator applied to $e_{m,n+km}$ and, conditionally on the rectangular paths conjecture, an analogous formula for fall-decorated rectangular paths.
\end{abstract}

\maketitle

\section{Introduction}

In the 1990s, Garsia and Haiman set out to prove the Schur positivity of the (modified) Macdonald polynomials by showing them to be the bi-graded Frobenius characteristic of certain Garsia-Haiman modules \cite{GarsiaHaiman1993GradedRepresentationModel}. Their prediction was confirmed in 2001, when Haiman used the algebraic geometry of the Hilbert scheme to prove that the dimension of their modules is equal to $n!$ \cite{Haiman2001nFactorial}, thus proving the $n!$ theorem. In the course of these developments, it became clear that there were remarkable connections to be found between Macdonald polynomial theory and the representation theory of the symmetric group. For example, during their quest for Macdonald positivity, Garsia and Haiman introduced the $\mathfrak{S}_n$-module of \emph{diagonal harmonics}, i.e.\ the coinvariants of the diagonal action of $\mathfrak{S}_n$ on polynomials in two sets of $n$ variables, and they conjectured that its Frobenius characteristic is given by $\nabla e_n$, where $\nabla$ is the \emph{nabla} operator on symmetric functions introduced in \cite{BergeronGarsiaHaimanTesler1999IdentitiesPositivityConjectures}, which acts diagonally on Macdonald polynomials. Haiman proved this conjecture in 2002 \cite{Haiman2002HilbertScheme}.

The combinatorial side of things solidified when Haglund, Haiman, Loehr, Remmel, and Ulyanov then formulated the so-called \emph{shuffle conjecture}  \cite{HaglundHaimanLoehrRemmelUlyanov2005ShuffleConjecture}, i.e.\ they predicted a combinatorial formula for $\nabla e_n$ in terms of labeled Dyck paths, which are lattice paths using North and East steps going from $(0,0)$ to $(n,n)$ and staying weakly above the line connecting these two points (called the \emph{main diagonal}). Several years later, Haglund, Morse, and Zabrocki conjectured a \emph{compositional} refinement of the shuffle conjecture, which also specified all the points where the Dyck paths return to the main diagonal \cite{HaglundMorseZabrocki2012CompositionalShuffleConjecture}. This was the statement later proved by Carlsson and Mellit in \cite{CarlssonMellit2018ShuffleConjecture}, which implies the \emph{shuffle theorem}.

Over the years, this subject has revealed itself to be extremely fruitful and to have striking connections to other fields of mathematics including elliptic Hall algebras \cites{SchiffmannVasserot2011EllipticHallAlgebra,BlasiakHaimanMorsePunSeelinger2023ShuffleAnyLine}, affine Hecke algebras \cite{CarlssonMellit2018ShuffleConjecture}, Springer fibers \cite{Mellit2020SpringerFibers}, the homology of torus knots \cites{GorskyNegut2015RefinedKnotInvariants,Mellit2022HomologyTorusKnots}, and the shuffle algebra of symmetric functions \cite{Negut2014ShuffleAlgebra}.

In this paper, we add a few (conjectural) formulas to the substantial list of variants and generalizations inspired by the success story of the shuffle theorem; that is, equations with a symmetric function related to Macdonald polynomials on one side and lattice paths combinatorics on the other. Furthermore, we support one of these conjectures by proving a non-trivial special case.

One of the first shuffle-like formulas was conjectured in 2007 by Loehr and Warrington \cite{LoehrWarrington2007Square}. They predicted an expression of $\nabla \omega(p_n)$ in terms of \emph{square paths}, i.e.\ lattice paths from $(0,0)$ to $(n,n)$ using only North and East steps and ending with an East step (without the restriction of staying above the main diagonal). Their formula was proved by Sergel in \cite{Sergel2017SquarePaths} to be a consequence of the shuffle theorem.

Next, Haglund, Remmel, and Wilson formulated the \emph{Delta conjecture} \cite{HaglundRemmelWilson2018DeltaConjecture}, a pair of conjectures for the symmetric function $\Delta'_{e_{n-k-1}}e_n$ in terms of decorated Dyck paths, where $k$ decorations are placed on either \emph{rises} or \emph{valleys} of the path. The symmetric function operator $\Delta'_f$ acts diagonally on the Macdonald polynomials and generalizes $\nabla$, in a sense.
The rise version of the Delta conjecture was proved by D'Adderio and Mellit in \cite{DAdderioMellit2022CompositionalDelta}, using the compositional refinement in \cite{DAdderioIraciVandenWyngaerd2021ThetaOperators}.
A \emph{Delta square conjecture} was stated in \cite{DAdderioIraciVandenWyngaerd2019DeltaSquareConjecture} and is still open today; it extends (the rise version of) the Delta conjecture in the same way as the square paths theorem extends the shuffle theorem.
The valley version also has similar extensions \cites{QiuWilson2020ValleyExtendedDelta,IraciVandenWyngaerd2021ValleySquare}, but it lacks a compositional version and is still open.

Around the same time as the formulation of the Delta conjecture, the story was extended to rectangular Dyck paths: paths from $(0,0)$ to $(m,n)$ staying above the main diagonal. In \cite{BergeronGarsiaSergelLevenXin2016PlethysticOperators}, building on the work in \cite{GorskyNegut2015RefinedKnotInvariants}, Bergeron, Garsia, Sergel, and Xin conjectured that a certain symmetric function related to the elliptic Hall algebra studied by Schiffmann and Vasserot \cite{SchiffmannVasserot2011EllipticHallAlgebra} can be expressed in terms of rectangular Dyck paths. Their prediction was recently proved by Mellit \cite{Mellit2021Rational}.

A first attempt to unify these last two shuffle-like conjectures appears in \cite{IraciPagariaPaoliniVandenWyngaerd2023}, where the authors of the present paper and Vanden Wyngaerd propose a univariate rational rise Delta conjecture, in terms of Dyck paths on a rectangle that lie above a certain ``broken'' diagonal. In the present work, we perfect the attempt, giving a complete formulation in terms of fall-decorated Dyck paths instead, which we manage to prove using the techniques introduced by Gillespie, Gorsky, and Griffin in \cite{GillespieGorskyGriffin2025}.

Our main result is a rational shuffle theorem for rectangular paths, which generalizes both the rational shuffle theorem for Dyck paths in \cite{Mellit2021Rational} (cf.\ \cite{BlasiakHaimanMorsePunSeelinger2023ShuffleAnyLine}) and the rise Delta theorem in \cites{HaglundRemmelWilson2018DeltaConjecture, DAdderioMellit2022CompositionalDelta}.

\begin{theorem}[Fall-decorated rational shuffle theorem]
    \label{thm:main}
    For any $m, n, k \in \mathbb{N}$ with $m > 0$, we have
    \begin{equation}
        s^\perp_{(m-1)^k} e_{m,n+km} = \sum_{\pi \in \LRD(m+k,n+k)_{\ast k}} q^{\dinv(\pi)} t^{\area(\pi)} x^\pi.
    \end{equation}    
\end{theorem}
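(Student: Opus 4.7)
The plan is to deduce the theorem from Mellit's rational shuffle theorem \cite{Mellit2021Rational} via a fall-insertion bijection in the spirit of Gillespie, Gorsky, and Griffin \cite{GillespieGorskyGriffin2025}, lifting their argument from the square to the rectangular setting. The starting point is the identity
\[ e_{m,n+km} = \sum_{\pi \in \LD(m,n+km)} q^{\dinv(\pi)} t^{\area(\pi)} x^\pi, \]
so the task reduces to showing that $s^\perp_{(m-1)^k}$ applied to the right-hand side equals the generating function over fall-decorated paths on the larger $(m+k) \times (n+k)$ rectangle.

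First I would construct a \emph{collapsing map} that, given a path $\pi \in \LRD(m+k,n+k)_{\ast k}$ with its $k$ decorated falls, removes each fall by shrinking the rectangle accordingly, producing a labeled Dyck path on $m \times (n+km)$ together with auxiliary data recording the positions of the removed decorations and a partial reshuffling of the labels. The $\area$ statistic should transform by a predictable shift under this map, as the ``area lost'' when collapsing a fall can be computed locally from its position; this is the easier of the two statistics to analyze.

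Second I would match the auxiliary data, together with the induced action on $x^\pi$, to the combinatorial effect of $s^\perp_{(m-1)^k}$. Using the adjunction $\langle s^\perp_{(m-1)^k} f, g \rangle = \langle f, s_{(m-1)^k} g \rangle$ together with standard identities for skewing by a rectangular Schur function, I expect the fibers of the collapsing map to correspond exactly to the effect of pairing with $s_{(m-1)^k}$ on the label side, with the $k$ rows of length $m-1$ reflecting the data carried by the $k$ decorated falls.

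The main obstacle will be controlling the transformation of the new $\dinv$ statistic. Since, by construction, $\dinv$ specializes on the one hand to the decorated-square dinv of \cites{HaglundRemmelWilson2018DeltaConjecture, DAdderioMellit2022CompositionalDelta} and on the other to the rectangular dinv of \cite{Mellit2021Rational}, its change under fall insertion must be compatible with both specializations simultaneously. Tracking the individual dinv contributions of each inserted fall -- broken down according to the fall's position relative to other decorations, rises, and the diagonal -- and matching the sum with the symmetric-function-side action of the skewing operator is where most of the technical work will lie.
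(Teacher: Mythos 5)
Your overall skeleton -- start from Mellit's rational shuffle theorem on the $m \times (n+km)$ rectangle, relate decorated paths on the $(m+k)\times(n+k)$ rectangle to undecorated paths on the thin rectangle by collapsing/inserting falls, and track the statistics -- is the same as the paper's. But there is a genuine gap in how you propose to handle the operator $s^\perp_{(m-1)^k}$. You write that you expect ``the fibers of the collapsing map to correspond exactly to the effect of pairing with $s_{(m-1)^k}$,'' as if Schur skewing admitted a direct positive combinatorial interpretation on labeled paths. It does not. Only skewing by complete homogeneous functions $h_\alpha^\perp$ has such an interpretation (extracting paths with a prescribed multiset of ``big'' labels). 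The proof therefore must first expand $s_{(m-1)^k}$ via Jacobi--Trudi as a \emph{signed} sum $\sum_\alpha (-1)^{\sgn(\alpha)} h_{\tilde\alpha}$ over allowable compositions (modulo $h_j$ with $j>m$), interpret each signed term via big-labeled paths, transport the big labels to fall-labelings of the decorated path, and then exhibit a sign-reversing involution on fall-labelings (the Gillespie--Gorsky--Griffin involution $\varphi$) whose unique fixed point per decorated path is the labeling with star word $\overline{1}\,\overline{2}\cdots\overline{k}$. Without this signed expansion and cancellation, the fibers of your collapsing map are not in bijection with anything, and the argument cannot close.

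This omission also affects your plan for $\dinv$. The identity $\dinv(\pi) = \dinv(\tilde\pi)$ is \emph{false} for a generic element of the fiber; it holds only for the canonical fall-labeling surviving the involution, and one needs the separate fact that the auxiliary statistic $\mathsf{falldinv}$ is invariant under $\varphi$ so that the non-fixed terms genuinely cancel in pairs. So ``tracking the individual dinv contributions of each inserted fall'' must be done relative to that specific labeling, and the bookkeeping (carried out in the paper via the ENS representation and a signed crossing count of half-lines against subpaths $\pi_s$) is where the real work is. One minor further correction: under the paper's bijection the area is preserved exactly, not shifted.
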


For the definition of the symmetric function $e_{m,n}$, see~\Cref{def:hall-algebra-op}. The same approach works for paths above any ``broken'' line (see \Cref{def:broken-diagonal} and \Cref{rmk:any-slope} for details), but we will restrict to paths from $(0,0)$ to $(m+k, n+k)$ for simplicity. Conditionally on \cite{IraciPagariaPaoliniVandenWyngaerd2023}*{Conjecture~4.2}, we also prove the following.

\begin{theorem}[Fall-decorated rectangular shuffle theorem]
    \label{thm:rectangular}
    If \cite{IraciPagariaPaoliniVandenWyngaerd2023}*{Conjecture~4.2} holds, then for any $m, n, k \in \mathbb{N}$ and $d = \gcd(m,n)$, we have
    \begin{equation}
        s^\perp_{(m-1)^k} \frac{[m]_q}{[d]_q} p_{m,n+km} = \sum_{\pi \in \LRP(m+k,n+k)_{\ast k}} q^{\dinv(\pi)} t^{\area(\pi)} x^\pi.
    \end{equation}
    In particular, the identity holds if $d = 1$.
\end{theorem}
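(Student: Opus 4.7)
My plan is to derive Theorem~\ref{thm:rectangular} from Theorem~\ref{thm:main} by combining it with the rectangular paths conjecture \cite{IraciPagariaPaoliniVandenWyngaerd2023}*{Conjecture~4.2}, along the lines of Sergel's strategy \cite{Sergel2017SquarePaths} for extending the shuffle theorem to the square paths theorem. Both sides are expected to decompose along cyclic rotations, which reduce rectangular paths to rectangular Dyck paths on the combinatorial side and convert the $p$-basis to the $e$-basis via a known plethystic identity on the symmetric function side.

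First, I would set up a shift decomposition for $\pi \in \LRP(m+k,n+k)_{\ast k}$. Given such a path, let $s$ be the minimum of the ``height'' of $\pi$ relative to the broken diagonal (see \Cref{def:broken-diagonal}), and rotate the sequence of steps so that a step attaining this minimum is brought to the origin. The resulting object is a fall-decorated rectangular Dyck path in $\LRD(m+k,n+k)_{\ast k}$, together with a marker recording the step chosen. Assuming the new $\dinv$ statistic of this paper transforms predictably under this cyclic rotation (it should shift by a power of $q$ controlled by $s$), one can express the LRP generating function as a weighted sum of LRD generating functions, with weights assembling into a $[m]_q/[d]_q$-style factor that matches the symmetric-function side.

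Second, I would invoke Theorem~\ref{thm:main} to rewrite each LRD generating function as the image under $s^\perp_{(m-1)^k}$ of $e_{m,n+km}$, and apply \cite{IraciPagariaPaoliniVandenWyngaerd2023}*{Conjecture~4.2}, which supplies the non-decorated identity relating $\frac{[m]_q}{[d]_q}p_{m,n+km}$ to a weighted sum of shifted copies of $e_{m,n+km}$, to collect these contributions into a single symmetric function. Since $s^\perp_{(m-1)^k}$ is linear, the $k$ fall decorations are carried across this identification without interacting with the plethystic coefficients.

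The main obstacle I anticipate is checking the compatibility of the fall decorations and the new $\dinv$ with the shift rotation: while $\area$ transforms in the familiar way, the $\dinv$ contribution of a decorated step may depend on global features of the path (e.g.\ which broken diagonals it crosses), and one must verify that these contributions are trackable through cyclic rotation. This is the analogue of the technical heart of \cite{Sergel2017SquarePaths} and \cite{DAdderioIraciVandenWyngaerd2019DeltaSquareConjecture}, adapted to the rectangular, fall-decorated setting. When $d=1$, the factor $[m]_q/[d]_q=[m]_q$ and \cite{IraciPagariaPaoliniVandenWyngaerd2023}*{Conjecture~4.2} is a consequence of the rational shuffle theorem \cite{Mellit2021Rational}, so the argument yields the unconditional corollary stated at the end of the theorem.
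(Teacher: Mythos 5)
Your route is genuinely different from the paper's, and it has a gap that I do not see how to close. You propose to use \cite{IraciPagariaPaoliniVandenWyngaerd2023}*{Conjecture~4.2} as a symmetric-function identity expressing $\frac{[m]_q}{[d]_q}p_{m,n+km}$ as a weighted sum of ``shifted copies'' of $e_{m,n+km}$, to be matched against a shift decomposition of $\LRP(m+k,n+k)_{\ast k}$ into Dyck paths à la Sergel. But Conjecture~4.2 is not such an identity: it is a combinatorial formula, asserting that $\frac{[m]_q}{[d]_q}p_{m,n}$ equals the $q,t,x$-generating function of \emph{all} labeled rectangular paths $\LRP(m,n)$. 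No plethystic decomposition of $p_{m,n}$ into $e_{m,n}$-type terms indexed by shift is known in the rational case (this is precisely why the conjecture is open for $d>1$ and why the paper's Theorem~\ref{thm:rectangular} is conditional). Your plan therefore rests on two unproven ingredients — the $p$-to-$e$ decomposition and the compatibility of the new $\dinv$ and of the fall decorations with cyclic rotation — and the second is exactly the ``technical heart'' you defer; note also that rotation can destroy the defining conditions of the objects (a decorated fall must be followed by a horizontal step, and a rectangular path must end with one), so even the combinatorial bijection is not routine.

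The paper's argument is much more direct and avoids rotation entirely. Every combinatorial step in the proof of Theorem~\ref{thm:main} — the interpretation of $h_\alpha^\perp$ as coefficient extraction (\Cref{prop:skewing-h}), the bijection $\psi$ (\Cref{prop:shape-bijection}), the sign-reversing involution (\Cref{prop:dinv-involution}), and the dinv identity (\Cref{thm:bijection-dinv}) — is stated for arbitrary rectangular paths $\LRP$, not only for Dyck paths. One therefore reruns the proof of \Cref{thm:main} verbatim, replacing Mellit's theorem for $e_{m,n+km}$ (a sum over $\LRD(m,n+km)$) by the conjectural combinatorial formula for $\frac{[m]_q}{[d]_q}p_{m,n+km}$ (a sum over $\LRP(m,n+km)$), using $\gcd(m,n+km)=\gcd(m,n)$; the output is the sum over $\LRP(m+k,n+k)_{\ast k}$. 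The $d=1$ case is unconditional because \cite{IraciPagariaPaoliniVandenWyngaerd2023}*{Theorem~6.1} proves the conjecture there. If you want to salvage your approach, you would first need to establish the rotation lemma for the fall-decorated rectangular $\dinv$, which is a substantial open problem in its own right rather than a checkable detail.
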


The paper is structured as follows. In \Cref{sec:symmetric-functions}, we introduce the notation for the symmetric functions we use, and in \Cref{sec:combinatorial-definitions}, we do the same for the combinatorial objects.

\Cref{sec:combinatorial-arguments} and \Cref{sec:dinv} are set up for the proof of \Cref{thm:main}, which will be completed in \Cref{sec:proof-main}. In \Cref{sec:combinatorial-arguments}, we adapt the main arguments in \cite{GillespieGorskyGriffin2025} to our case. First, in Subsection~\ref{ssc:skewing}, we show that applying $s^\perp_{(m-1)^k}$ to $e_{m,n+km}$ gives a signed sum of a certain subset of labeled rectangular paths of size $m \times (n+km)$.
Then, in Subsection~\ref{ssc:bijection} we describe a bijection between a certain subset of labeled rectangular paths of size $m \times (n+km)$ and all fall-labeled rectangular paths of size $(m+k) \times (n+k)$ with $k$ decorated falls, without any restriction on being above the main diagonal. This bijection makes use of a new representation for decorated objects, where decorated East steps are replaced by South steps; this representation preserves the area and the vertical distances of the vertical steps from the main diagonal.
Afterwards, in Subsection~\ref{ssc:involution} we exhibit a sign-reversing involution on our set with a unique fixed point for each element in $\LRP(m+k,n+k)_{\ast k}$.
Finally, in \Cref{sec:dinv} we show that the dinv of the fixed point is equal to the dinv of the corresponding element in $\LRP(m+k,n+k)_{\ast k}$, and in \Cref{sec:proof-main} we combine the pieces, completing the proof of \Cref{thm:main}.

We conclude the paper by highlighting some connections with the existing literature in \Cref{sec:links}, especially concerning the results about $D_\alpha$ operators by Blasiak et al.\ \cite{BlasiakHaimanMorsePunSeelinger2023ShuffleAnyLine} and the Theta operators by D'Adderio et al.\ \cite{DAdderioIraciVandenWyngaerd2021ThetaOperators}.

\section{Symmetric functions}
\label{sec:symmetric-functions}

For all undefined notation and unproven identities, we refer to \cite{DAdderioIraciVandenWyngaerd2022TheBible}*{Section~1}, where definitions, proofs, and/or references can be found. See also \cite{IraciPagariaPaoliniVandenWyngaerd2023}.

We denote by $\Lambda$ the graded algebra of symmetric functions with coefficients in $\mathbb{Q}(q,t)$, and by $\<\, , \>$ the \emph{Hall scalar product} on $\Lambda$, defined by declaring that the Schur functions form an orthonormal basis.

The standard bases of the symmetric functions that will appear in our calculations are the monomial $\{m_\lambda\}_{\lambda}$, complete $\{h_{\lambda}\}_{\lambda}$, elementary $\{e_{\lambda}\}_{\lambda}$, power $\{p_{\lambda}\}_{\lambda}$, and Schur $\{s_{\lambda}\}_{\lambda}$ bases.

For $f \in \Lambda$, we denote by $f^\perp$ the operator that is the adjoint of the product by $f$ with respect to the Hall scalar product, that is, for every $g, h \in \Lambda$, we have $\< f^\perp g, h \> = \< g, fh \>$.

For a partition $\mu \vdash n$, we denote by \[ \Ht_\mu \coloneqq \Ht_\mu[X] = \Ht_\mu[X; q,t] = \sum_{\lambda \vdash n} \widetilde{K}_{\lambda \mu}(q,t) s_{\lambda} \] the \emph{(modified) Macdonald polynomials}, where \[ \widetilde{K}_{\lambda \mu} \coloneqq \widetilde{K}_{\lambda \mu}(q,t) = K_{\lambda \mu}(q,1/t) t^{n(\mu)} \] are the \emph{(modified) Kostka coefficients} (see \cite{Haglund2008Book}*{Chapter~2} for more details).

Macdonald polynomials form a basis of the algebra of symmetric functions $\Lambda$. This is a modification of the basis introduced by Macdonald \cite{Macdonald1995Book}.

If we identify the partition $\mu$ with its Young diagram, i.e.\ with the collection of cells $\{(i,j)\mid 1\leq i\leq \mu_j, 1\leq j\leq \ell(\mu)\}$, then for each cell $c\in \mu$ we refer to the \emph{arm}, \emph{leg}, \emph{co-arm} and \emph{co-leg} (denoted respectively by $a_\mu(c), l_\mu(c), a_\mu'(c), l_\mu'(c)$) as the number of cells in $\mu$ that are strictly to the right, below, to the left and above $c$ in $\mu$, respectively (see Figure~\ref{fig:notation}).

\begin{figure}
    \centering
    \begin{tikzpicture}[scale=0.4]
        \draw[gray,opacity=.6](0,0) grid (15,10);
        \fill[white] (1,-0.1)|-(3,1) |- (6,3) |- (9,7) |- (15.1,8) |- (1,-.1);
        \fill[blue, opacity=.15] (0,7) rectangle (9,8) (3,3) rectangle (4,10);
        \fill[blue, opacity=.5] (3,7) rectangle (4,8);
        \draw (6,7.5) node {\tiny{Arm}} (3.5,5) node[rotate=90] {\tiny{Leg}} (3.5, 9) node[rotate = 90] {\tiny{Co-leg}} (1.5,7.5) node {\tiny{Co-arm}} ;
    \end{tikzpicture}
    \caption{Arm, leg, co-arm, and co-leg of a cell of a partition.}
    \label{fig:notation}
\end{figure}

For every partition $\mu$, we define the following constants:

\[ B_{\mu} \coloneqq B_{\mu}(q,t) = \sum_{c \in \mu} q^{a_{\mu}'(c)} t^{l_{\mu}'(c)}, \qquad
    \Pi_{\mu} \coloneqq \Pi_{\mu}(q,t) = \prod_{c \in \mu / (1)} (1-q^{a_{\mu}'(c)} t^{l_{\mu}'(c)}). \]

Notice that in the definition of $\Pi_\mu$, the exponents $a_{\mu}'$ and $l_{\mu}'$ are evaluated with respect to the shape $\mu$, not $\mu / (1)$, the latter only denoting that we should skip the cell $(1,1)$ in the product (otherwise the product would be $0$).

We will make extensive use of the \emph{plethystic notation} (cf. \cite{Haglund2008Book}*{Chapter~1}). We also need several linear operators on $\Lambda$.

\begin{definition}[{\cite{BergeronGarsia1999ScienceFiction}*{3.11}}]
    \label{def:nabla}
    We define the linear operator $\nabla \colon \Lambda \rightarrow \Lambda$ on the eigenbasis of Macdonald polynomials as \[ \nabla \Ht_\mu = e_{\lvert \mu \rvert}[B_\mu] \Ht_\mu. \]
\end{definition}

\begin{definition}
    \label{def:pi}
    We define the linear operator $\mathbf{\Pi} \colon \Lambda \rightarrow \Lambda$ on the eigenbasis of Macdonald polynomials as \[ \mathbf{\Pi} \Ht_\mu = \Pi_\mu \Ht_\mu \] where we conventionally set $\Pi_{\varnothing} \coloneqq 1$.
\end{definition}

\begin{definition}
    \label{def:delta}
    For $f \in \Lambda$, we define the linear operators $\Delta_f, \Delta'_f \colon \Lambda \rightarrow \Lambda$ on the eigenbasis of Macdonald polynomials as \[ \Delta_f \Ht_\mu = f[B_\mu] \Ht_\mu, \qquad \qquad \Delta'_f \Ht_\mu = f[B_\mu-1] \Ht_\mu. \]
\end{definition}

Observe that on the vector space of homogeneous symmetric functions of degree $n$, denoted by $\Lambda^{(n)}$, the operator $\nabla$ equals $\Delta_{e_n}$.

From now on, let $M \coloneqq (1-q)(1-t)$.

\begin{definition}[{\cite{DAdderioIraciVandenWyngaerd2021ThetaOperators}*{(28)}}]
    \label{def:theta}
    For any symmetric function $f \in \Lambda^{(n)}$ we define the \emph{Theta operators} on $\Lambda$ in the following way: for every $F \in \Lambda^{(m)}$ we set
    \begin{equation*}
        \Theta_f F  \coloneqq
        \left\{\begin{array}{ll}
            0                                                           & \text{if } n \geq 1 \text{ and } m=0 \\
            f \cdot F                                                   & \text{if } n=0 \text{ and } m=0      \\
            \mathbf{\Pi} f \left[\frac{X}{M}\right] \mathbf{\Pi}^{-1} F & \text{otherwise}
        \end{array}
        \right. ,
    \end{equation*}
    and we extend by linearity the definition to any $f, F \in \Lambda$.
\end{definition}

It is clear that $\Theta_f$ is linear. In addition, if $f$ is homogeneous of degree $k$, then so is $\Theta_f$:
\[\Theta_f \Lambda^{(n)} \subseteq \Lambda^{(n+k)} \qquad \text{ for } f \in \Lambda^{(k)}. \]

Finally, we need to refer to \cite{BergeronGarsiaSergelLevenXin2015ShuffleConjectures}*{Algorithm~4.1} (see also \cite{BergeronGarsiaSergelLevenXin2016PlethysticOperators}*{Definition~1.1, Theorem~2.5}).

\begin{definition}
    Let $m, n > 0$. Let $a,b,c,d \in \mathbb{N}$ be such that $a+c=m$, $b+d=n$, $ad-bc = \gcd(m,n)$. We recursively define $Q_{m,n}$ as an operator on $\Lambda$ by \[ Q_{m,n} = \frac{1}{M} \left( Q_{c,d} Q_{a,b} - Q_{a,b} Q_{c,d} \right), \] with the base cases \[ Q_{1,0} = D_0 = \mathsf{id} - M \Delta_{e_1} \quad \text{ and } \quad Q_{0,1} = - \underline{e_1} \] (where $\underline{f}$ is the multiplication by $f$).
\end{definition}

\begin{definition}
    \label{def:hall-algebra-op}
    For a coprime pair $(a,b)$ and $f \in \Lambda^{(d)}$, we define $F_{a,b}(f)$ as follows. Let \[ f = \sum_{\lambda \vdash d} c_\lambda(q,t) \left( \frac{qt}{qt-1} \right)^{\ell(\lambda)} h_\lambda \left[ \frac{1-qt}{qt} X \right]. \] Then, we define \[ F_{a,b}(f) \coloneqq \sum_{\lambda \vdash d} c_\lambda(q,t) \prod_{i=1}^{\ell(\lambda)} Q_{\lambda_i a, \lambda_i b}(1). \]
\end{definition}

In the literature, $F_{a,b}(f)$ is often written as $f[-MX^{a,b}]$.
We use the notation \[ e_{m,n} \coloneqq F_{a,b}(e_d), \qquad p_{m,n} \coloneqq F_{a,b}(p_d) \] where $m = ad, n = bd$, and $\gcd(a,b) = 1$. Beware: $e_{4,2} = F_{2,1}(e_2)$, but $e_{42} = e_4 e_2$. Notice that, in the literature, we sometimes see $e_{m,n}$ denoted as $E_{m,n} \cdot 1$, where $E$ is the elliptic Hall algebra operator.

\section{Combinatorial definitions}
\label{sec:combinatorial-definitions}

The objects we are concerned with are \emph{rectangular Dyck paths} and \emph{rectangular paths}. The following definitions first appeared in \cite{BergeronGarsiaSergelLevenXin2016PlethysticOperators} for rectangular Dyck paths and in \cite{IraciPagariaPaoliniVandenWyngaerd2023} for rectangular paths.

\subsection{Rectangular paths}

\begin{definition}
    A \emph{rectangular path} of size $m \times n$ is a lattice path composed of unit vertical and horizontal steps, going from $(0,0)$ to $(m,n)$, and ending with a horizontal step. A \emph{rectangular Dyck path} is a rectangular path that lies weakly above the diagonal $my = nx$ (called the \emph{main diagonal}).
\end{definition}

\begin{figure}
    \centering
    \begin{tikzpicture}[scale=0.6]
        \draw[gridcolor] (0,0) grid (7,9);
        \draw[dashed, diagonalcolor] (0, -11/7) -- (7, 9 - 11/7);
        \draw[diagonalcolor] (0,0) -- (7,9);

        \draw[standardcolor, very thick] (0,0) -- (0,1) -- (1,1) -- (2,1) -- (2,2) -- (2,3) -- (2,4) -- (3,4) -- (4,4) -- (4,5) -- (4,6) -- (4,7) -- (5,7) -- (6,7) -- (6,8) -- (6,9) -- (7,9);
    \end{tikzpicture}
    \caption{A $7 \times 9$ rectangular path with its base diagonal (dashed) and main diagonal (solid).}
    \label{fig:rectangular-path}
\end{figure}

The requirement that the path end with a horizontal step is imposed by analogy with the original definition of a labeled square path \cite{LoehrWarrington2007Square} and also because the resulting combinatorics better matches the algebraic expressions. We denote the sets of rectangular paths of size $m \times n$ as $\RP(m,n)$.

Given a rectangular path $\pi$, denote by $\horizontalsteps = \horizontalsteps(\pi)$ (resp.\ $\verticalsteps = \verticalsteps(\pi)$) the set of horizontal (resp.\ vertical) steps of~$\pi$. The set $\horizontalsteps \sqcup \verticalsteps$ is totally ordered by traversing the path from $(0, 0)$ to $(m, n)$: we write $i < j$ if step $i$ precedes step $j$.
If $i$ is a step of $\pi$ and $p \in \Z$ is an integer, we write $i + p$ to denote the step that lies $p$ positions away from $i$ along $\pi$; thus, $i+1$ is the step immediately after~$i$ (if it exists).

\begin{definition}
    For a $m \times n$ rectangular path $\pi$ and a step $i \in \horizontalsteps$ (resp.\ $i \in \verticalsteps$), denote by $v_i = v_i(\pi)$ the (signed) vertical distance between the right-most (resp.\ bottom-most) point of $i$ and the main diagonal;
    the sign is positive if the endpoint is above the main diagonal.
    Define the \emph{vertical area word} of the path as the sequence $(v_i)_{i \in \horizontalsteps}$, i.e., the sequence of heights of the horizontal steps of the path. Set $s \coloneqq - \min \{v_i \mid i \in \horizontalsteps\}$, which we call the \emph{shift} of the path. Note that $s = 0$ if $\pi$ is a rectangular Dyck path, and $s > 0$ otherwise.
\end{definition}

\begin{definition}
    The diagonal $m(y+s) = nx$ is called the \emph{base diagonal}. It is the lowest diagonal that intersects the path.
\end{definition}

\begin{definition}
    The \emph{area} of a rectangular path $\pi$ is $\area(\pi) \coloneqq \sum_{i\in \horizontalsteps} \lfloor v_i + s \rfloor$. This is the number of whole squares that lie entirely between the path $\pi$ and its base diagonal (including the squares below the line $y=0$, but excluding the squares to the right of the line $x=m$).
\end{definition}

\begin{remark}
    This definition differs from \cite{IraciPagariaPaoliniVandenWyngaerd2023}*{Definition~3.4} because we count the squares vertically below the path instead of the squares horizontally to the right of the path. This is irrelevant for now, as the two triangles outside of the $m \times n$ rectangle we are counting in the two cases are congruent and translated by an integer vector. However, it will become important when we define the area of decorated paths.
\end{remark}

For example, the path in Figure~\ref{fig:rectangular-path} has vertical area word
\[ \textstyle \left(-\frac{2}{7}, -\frac{11}{7}, \frac{1}{7}, -\frac{8}{7}, \frac{4}{7}, -\frac{5}{7}, 0\right). \]
Thus, its shift is $\frac{11}{7}$ and its area is $1 + 0 + 1 + 0 + 2 + 0 + 1 = 5$.

\subsection{Decorated rectangular paths}

In a similar fashion to the rise version of the Delta conjecture \cite{HaglundRemmelWilson2018DeltaConjecture} (which is now a theorem \cites{DAdderioMellit2022CompositionalDelta,BlasiakHaimanMorsePunSeelinger2023ProofExtendedDelta}), we introduce the concept of \emph{decorated falls} for rectangular paths.

\begin{definition}
    The \emph{falls} of a rectangular path are the horizontal steps that are immediately followed by another horizontal step. A \emph{fall-decorated rectangular path} is a pair $(\pi, \decoratedsteps)$ where $\pi$ is a rectangular path and $\decoratedsteps$ is a subset of its falls.
    Normally, we will simply write \emph{decorated} in place of \emph{fall-decorated}.
    For a decorated rectangular path, we denote by $\horizontalsteps$ the set of non-decorated horizontal steps, so that the set of all steps of $\pi$ is the disjoint union $\horizontalsteps \sqcup \verticalsteps \sqcup \decoratedsteps$ (totally ordered by traversing the path).
\end{definition}

\begin{definition}
    \label{def:broken-diagonal}
    For a decorated rectangular path of size $(m+k) \times (n+k)$ with $k$ decorated falls, we define the \emph{broken diagonal} to be the broken line segment from $(0, 0)$ to $(m+k,n+k)$ that proceeds with slope $\frac{n}{m}$ in columns containing non-decorated horizontal steps, and with slope $1$ in columns containing decorated falls.
\end{definition}

This definition is analogous to \cite{IraciPagariaPaoliniVandenWyngaerd2023}*{Definition~3.6} when reflecting the path with respect to the antidiagonal.
Note that if the path has no decorated falls, then the broken diagonal coincides with the main diagonal.

Since the broken diagonal never proceeds horizontally or vertically, it intersects any horizontal or vertical segment in at most one point.
We say that a point $(x, y)$ with $0 \leq x \leq m + k$ \textit{diagonally projects} onto a horizontal or vertical step if the vertical translation of the broken diagonal passing through $(x, y)$ intersects the step.
Formally, if the broken diagonal is denoted by $\mathrm{BD}$, there exists a unique $v \in \mathbb{R}$ such that $(x,y) \in \mathrm{BD}+ (0,v)$; the point $(x,y)$ diagonally projects onto a step $s$ if $s \cap (\mathrm{BD}+ (0,v)) \neq \varnothing$.

\begin{remark}
    \label{rmk:any-slope}
    The definition of broken diagonal extends to any slope, as follows. Fix any positive reals $a$ and $b$, and let $s = b/a$. Construct any broken line segment from $(0, -\{b\})$ to $(a + k, \lfloor b \rfloor + k)$ that proceeds with slope $1$ in $k$ of the columns and with slope $s$ in the remaining columns; here, $\{b\} \coloneqq b - \lfloor b \rfloor$ denotes the fractional part.
    Consider all fall-decorated rectangular paths from $(0, 0)$ to $(\lfloor a \rfloor + k, \lfloor b \rfloor + k)$ that lie above this line and have $k$ decorated falls exactly in the chosen columns of slope $1$.
    All results in this paper extend to these paths as well, but we will only treat the case where $a, b \in \mathbb{N}$ for simplicity.
\end{remark}

\begin{remark}
    If $m = n$, that is, the path is a square path, then there is a bijection between falls and rises (i.e.\ vertical steps preceded by vertical steps).
    This is not the case for rectangular paths. However, the reflection with respect to the antidiagonal defines a bijection between $(m+k) \times (n+k)$ rectangular Dyck paths with $k$ decorated falls and $(n+k) \times (m+k)$ rectangular Dyck paths with $k$ decorated rises, as in \cite{IraciPagariaPaoliniVandenWyngaerd2023}*{Definition~3.5}. This bijection will come into play in \Cref{sec:theta-operators}.
\end{remark}

\begin{definition}
    We define a \emph{decorated rectangular Dyck path} to be a decorated rectangular path that lies weakly above the broken diagonal.
\end{definition}

See Figure~\ref{fig:decorated-dyck} (left) for an example of such a path. In our figures, we use a $\ast$ to mark the decorated falls.

\begin{figure}
    \centering
    \begin{tikzpicture}[scale=.6, baseline={(gridcenter)}]
        \coordinate (gridcenter) at (4.5,3);  %
        
        \fill[areacolor] (0,1) rectangle (2,2);
        \fill[areacolor] (5,4) rectangle (6,5);
    
        \draw[step=1.0, gridcolor, thin] (0,0) grid (9,6);

        \begin{scope}
            \clip (0,0) rectangle (9,6);
            \draw[diagonalcolor] 
            (9,6) -- (8,5.5) -- (7,5) -- (6,4) -- (5,3.5) -- (4,3) -- (3,2) -- (2,1) -- (1,0.5) -- (0,0);
        \end{scope}

        \draw[standardcolor, very thick, -sharp >, sharp angle = -45] 
            (0, 0) -- (0, 2) -- (2, 2) -- (2, 4);

        \draw[decoratedcolor, very thick, sharp <-, sharp angle = -45]
            (2, 4) -- (4, 4);

        \draw[standardcolor, very thick, -sharp >, sharp angle = -45] 
            (4, 4) -- (5, 4) -- (5, 5) -- (6, 5) -- (6, 6);

        \draw[decoratedcolor, very thick, sharp <-, sharp angle = -45] 
            (6, 6) -- (7, 6);

        \draw[standardcolor, very thick] 
            (7, 6) -- (9, 6);

        \begin{scope}[decoratedcolor]
            \draw (6.5,6.3) node {$\ast$};
            \draw (3.5,4.3) node {$\ast$};
            \draw (2.5,4.3) node {$\ast$};
        \end{scope}
    \end{tikzpicture}
    \qquad\qquad
    \begin{tikzpicture}[scale=.6, baseline={(gridcenter)}]
        \coordinate (gridcenter) at (3,2);  %
    
        \fill[areacolor] (0,1) rectangle (2,2);
        \fill[areacolor] (3,2) rectangle (4,3);

        \draw[step=1.0, gridcolor, thin] (0,0) grid (6,4);

        \begin{scope}
            \clip (0,0) rectangle (6,3);
            \draw[diagonalcolor, thin] 
            (0, 0) -- (6, 3);
        \end{scope}

        \draw[standardcolor, very thick]
            (0, 0) -- (0,2) -- (1.95,2) -- (1.95, 4) -- (2, 4);

        \draw[decoratedcolor, very thick, -sharp >, sharp angle = 45]
            (2, 4) -- (2.05, 4) -- (2.05, 2);

        \draw[standardcolor, very thick, sharp <-, sharp angle = 45]
            (2.05, 2) -- (3, 2) -- (3, 3) -- (3.95, 3) -- (3.95, 4) -- (4, 4);

        \draw[decoratedcolor, very thick, -sharp >, sharp angle = 45]
            (4, 4) -- (4.05, 4) -- (4.05, 3);

        \draw[standardcolor, very thick, sharp <-, sharp angle = 45]
            (4.05, 3) -- (6, 3);

        \begin{scope}[decoratedcolor]
            \draw (2.3,3.5) node {$\ast$};
            \draw (2.3,2.5) node {$\ast$};
            \draw (4.3,3.5) node {$\ast$};
        \end{scope}
    \end{tikzpicture}

    \caption{On the left, a decorated rectangular Dyck path of size $(6 + 3) \times (3+3)$ with its broken diagonal. On the right, the ENS representation of the same path.
    Decorated steps are highlighted in dark red.
    The three squares contributing to the area are highlighted in gray.}
    \label{fig:decorated-dyck}
\end{figure}

The definitions of \emph{vertical distance} and \emph{vertical area word} extend to decorated paths as well, using the broken diagonal in place of the main diagonal.
The definition of \emph{area} also extends to decorated paths, where the sum over $i \in \horizontalsteps$ excludes the decorated steps.
For example, the area of the path in Figure~\ref{fig:decorated-dyck} is equal to $3$.

An alternative way to draw a $(m+k) \times (n+k)$ rectangular path $\pi$ with $k$ decorated falls is achieved by replacing decorated horizontal steps $i \in \decoratedsteps$ by decorated South steps.
The result is a lattice path from $(0, 0)$ to $(m, n)$ with $m$ East steps (indexed by $\horizontalsteps$), $n + k$ North steps (indexed by $\verticalsteps$), and $k$ decorated South steps (indexed by $\decoratedsteps$), where a sequence of consecutive South steps must be followed by an East step; see \Cref{fig:decorated-dyck} (right).
We call this alternative representation the \textit{ENS representation} of $\pi$. In this framework, the extension suggested in \Cref{rmk:any-slope} is even more apparent.

Conveniently, the vertical distance $v_i$ of a step from the broken diagonal (in the usual representation) coincides with the vertical distance of the corresponding step from the main diagonal in the ENS representation.
In addition, the area of a path $\pi$ is the number of whole squares between $\pi$ and its base diagonal in the ENS representation.

\subsection{Labeled paths}

Finally, we need to introduce labeled objects.

\begin{definition}
    A \emph{labeling} of a decorated rectangular path is an assignment of a positive integer label to each vertical step of the path such that consecutive vertical steps are assigned strictly increasing labels.
    A \emph{labeled decorated rectangular path} is a triple $(\pi, \decoratedsteps, w)$ where $(\pi, \decoratedsteps)$ is a decorated rectangular path and $w$ is a labeling.
\end{definition}

\begin{remark}
    \label{rmk:notation}
    With an abuse of notation, we will sometimes write $\pi$ to mean $(\pi, \decoratedsteps, w)$.
\end{remark}

We say that a labeling is \emph{standard} if the set of labels is $[n+k] \coloneqq \{1, \dots, n+k\}$, where $n+k$ is the height of the path, and we denote by $w_i$ the label assigned to the vertical step $i \in \verticalsteps$.

\begin{figure}
    \centering
    \begin{minipage}{.45\textwidth}
        \centering
        \begin{tikzpicture}[scale=0.6]
            \draw[draw=none, use as bounding box] (0,0) rectangle (9,8);
            \draw[gridcolor, thin] (0,0) grid (9,7);
            \draw[diagonalcolor] (0,0) -- (9,7);

            \draw[standardcolor, very thick] 
                (9,7) -- (8,7) -- (8,6) -- (8,5)  -- (7,5) -- (6,5) -- (5,5) -- (5,4) 
                -- (4,4) -- (3,4) -- (2,4) -- (2,3) -- (2,2) -- (1,2) -- (0,2) -- (0,1) -- (0,0);

            \begin{scope}[every node/.style = {font=\scriptsize}]
              \draw
                (-0.3,0.5) node {$1$}
                (-0.3,1.5) node {$2$}
                (1.7,2.5) node {$4$}
                (1.7,3.5) node {$7$}
                (4.7,4.5) node {$2$}
                (7.7,5.5) node {$4$}
                (7.7,6.5) node {$7$};
            \end{scope}
            
        \end{tikzpicture}
    \end{minipage}%
    \begin{minipage}{.45\textwidth}
        \centering
        \begin{tikzpicture}[scale=.6]
            \draw[draw=none, use as bounding box] (0,0) rectangle (9,8);
            \draw[gridcolor, thin] (0,0) grid (9,7);

            \begin{scope}
                \clip (0,0) rectangle (9,7);
                \draw[diagonalcolor]
                (9,7) -- (8, 6+1/3) -- (7,5+2/3) -- (6,4+2/3) -- (5,4) -- (4,3+1/3) -- (3,2+1/3) -- (2,1+1/3) -- (0,0);
            
            \end{scope}

            \begin{scope}[standardcolor, very thick]
                \draw (0, 0) -- (0, 3) -- (1, 3) -- (1, 5) -- (2, 5);

                \draw[-sharp >, sharp angle = -45] (4, 5) -- (5, 5) -- (5, 6) -- (6, 6) -- (6, 7);
                \draw (7, 7) -- (9, 7);
            \end{scope}

            \begin{scope}[decoratedcolor, very thick]
                \draw (2, 5) -- (4, 5);
                \draw[sharp <-, sharp angle = -45] (6, 7) -- (7, 7);
            \end{scope}

            \begin{scope}[decoratedcolor]
                \draw (6.5,7.3) node {$\ast$};
                \draw (3.5,5.3) node {$\ast$};
                \draw (2.5,5.3) node {$\ast$};
            \end{scope}

            \begin{scope}[every node/.style = {font=\scriptsize}]
              \draw
                (-0.3,0.5) node {$1$}
                (-0.3,1.5) node {$2$}
                (-0.3,2.5) node {$4$}
                (0.7,3.5) node {$2$}
                (0.7,4.5) node {$4$}
                (4.7,5.5) node {$7$}
                (5.7,6.5) node {$1$};
            \end{scope}
            
        \end{tikzpicture}
    \end{minipage}
    \caption{A $9 \times 7$ labeled rectangular path (left) and labeled decorated Dyck path (right).}
\end{figure}

We denote the sets of labeled rectangular paths and labeled rectangular Dyck paths of size $m \times n$ by $\LRP(m,n)$ and $\LRD(m,n)$ respectively, and the sets of labeled fall-decorated rectangular paths and labeled fall-decorated rectangular Dyck paths of size $(m+k) \times (n+k)$ with $k$ decorations by $\LRP(m+k,n+k)_{\ast k}$ and $\LRD(m+k,n+k)_{\ast k}$, respectively.

\begin{definition}
    Given a labeled decorated rectangular path $(\pi, \decoratedsteps, w)$, let $x^w = \prod_{i \in \verticalsteps} x_{w_i}$.
    We sometimes write $x^\pi$ in place of $x^w$.
\end{definition}

It will be useful later to consider labelings in the alphabet \[ \mathbb{Z}_+ \cup \overline{\mathbb{Z}}_+ = 1 < 2 < 3 < \dots < \overline{1} < \overline{2} < \overline{3} < \dots. \]
In this case, we set $x_{\overline{i}} = y_i$ in the plethystic alphabet $X + Y$.

We now extend the definition of \emph{dinv} given in \cite{IraciPagariaPaoliniVandenWyngaerd2023} (cf.\ \cites{BergeronGarsiaSergelLevenXin2016PlethysticOperators, HicksSergel2015SimplerFormulaNumber, Mellit2021Rational}) to any fall-decorated rectangular path.

\begin{definition}
    \label{def:attack_relation}
    Let $\pi$ be a $(m+k) \times (n+k)$ (decorated) rectangular path, and let $i,j \in \verticalsteps$. We say that $i$ \emph{attacks} $j$ in $\pi$ (or $(i,j)$ is an \emph{attack relation} for $\pi$), and write $i \to j$, if
    \[ (v_i, i) <_{\text{lex}} (v_j, j) <_{\text{lex}} (v_i + 1, i). \]
\end{definition}

At this point, we can define the temporary dinv of a path.

\begin{definition}
    We define the \emph{temporary dinv} of a labeled (decorated)  rectangular path $(\pi, w)$ as
    \[ \tdinv(\pi, w) \coloneqq \# \{ i, j \in \verticalsteps \mid w_i < w_j \text{ and } i \to j \}. \]
\end{definition}

Geometrically, the temporary dinv counts all pairs of vertical steps $(i,j)$ such that $w_i < w_j$ and the bottom-most endpoint of step $j$ diagonally projects onto step $i$, where we include the bottom-most endpoint of step $i$ if $i < j$ and the top-most endpoint of step $i$ if $i > j$.

In order to complete the definition of dinv, we need to introduce a correction term, which is the \emph{cdinv} of the path. This is a generalization of the \emph{cdinv} defined in \cite{HicksSergel2015SimplerFormulaNumber} for Dyck paths and later in \cite{IraciPagariaPaoliniVandenWyngaerd2023} for rectangular paths.

First, we introduce some notation.

\begin{definition}
    Let $(\pi, \decoratedsteps)$ be a $(m+k) \times (n+k)$ fall-decorated rectangular path with $k$ decorated falls. 
    For $j \in \horizontalsteps$, we define $r_j \coloneqq \max \{r \in \mathbb{N} \mid j-p \in \mathcal{D} \, \, \, \forall \, 1 \leq p \leq r\}$ as the number of decorated falls immediately preceding the horizontal step~$j$.
\end{definition}

We are now ready to define the \textit{dinv correction} (or \textit{cdinv}).
We give two different definitions, which we prove to be equivalent in \Cref{prop:def_of_cdinv_coincide}.

\begin{definition}
    \label{def:cdinv}
    Let $(\pi, \decoratedsteps)$ be a fall-decorated rectangular path, and let
    \begin{align*}
        D_+ & \coloneqq \left\{ (i, j) \in \verticalsteps \times  \horizontalsteps \mid j < i \text{ and } v_i < v_j \leq v_i + 1 - \textstyle\frac{n}{m} \right\} \\
        D_- & \coloneqq \left\{ (i, j) \in \verticalsteps \times \horizontalsteps \mid j < i \text{ and } v_j \leq v_i < v_j + \textstyle\frac{n}{m} - 1 \right\} \\
        D_+^\ast & \coloneqq \left\{ (i, j) \in \decoratedsteps\times \horizontalsteps \mid i < j \text{ and } v_i \leq v_j  < v_i + 1- \textstyle\frac{n}{m} \right\} \\
        D_-^\ast & \coloneqq \left\{ (i, j) \in \decoratedsteps \times \horizontalsteps \mid i < j \text{ and } v_j < v_i  \leq  v_j - 1+ \textstyle\frac{n}{m} \right\} \\
         B & \coloneqq \{ i \in \verticalsteps \mid v_i < 0 \} \\
         B^* &\coloneqq \{ i \in \decoratedsteps \mid v_i \leq 0 \}.
    \end{align*}
    We define the \emph{dinv correction} of $(\pi, \decoratedsteps)$ as \[ \cdinv(\pi, \decoratedsteps) \coloneqq \# D_+ - \# D_- + \# D_+^\ast -\# D_-^\ast +\# B -\# B^\ast . \]
\end{definition}

\begin{figure}
    \centering
    \begin{tikzpicture}
        \draw[standardcolor, very thick] (0,0) -- (1,0) (5,1) -- (5,2);
        \filldraw[standardcolor] (5,2) circle (1.5pt);
        \filldraw[standardcolor, fill=white] (5,1) circle (1.5pt);
        \draw[gray, dashed] (1,0) -- (5,1.33) (0,0) -- (5,1.67);
    \end{tikzpicture}\hspace{2cm}
    \begin{tikzpicture}
        \draw[standardcolor, very thick] (0,0) -- (1,0) (2,2) -- (2,3);
        \filldraw[standardcolor] (1,0) circle (1.5pt);
        \filldraw[standardcolor, fill=white] (0,0) circle (1.5pt);
        \draw[gray, dashed] (0.2,0) -- (2,3) (0.8,0) -- (2,2);
    \end{tikzpicture}\\[1cm]
    \begin{tikzpicture}
        \draw[decoratedcolor, very thick] (0,0) -- (1,0);
        \draw[standardcolor, very thick] (3,2) -- (4,2);
        \filldraw[decoratedcolor] (1,0) circle (1.5pt);
        \filldraw[decoratedcolor, fill=white] (0,0) circle (1.5pt);
        \draw[gray, dashed] (0.2,0) -- (1,0.8) -- (3,2) (0.8,0) -- (1,0.2) -- (4,2);
        \node[circle, fill=white, inner sep=0.1, text=decoratedcolor] at (0.5, 0.2) {$\ast$};
    \end{tikzpicture}\hspace{2cm}
    \begin{tikzpicture}
        \draw[decoratedcolor, very thick] (0,0) -- (1,0);
        \draw[standardcolor, very thick] (2,3) -- (3,3);
        \filldraw[standardcolor] (2,3) circle (1.5pt);
        \filldraw[standardcolor, fill=white] (3,3) circle (1.5pt);
        \draw[gray, dashed] (0,0) -- (1,1) -- (2.2,3) (1,0) -- (2.8,3);
        \node[circle, fill=white, inner sep=0.1, text=decoratedcolor] at (0.5, 0.2) {$\ast$};
    \end{tikzpicture}%
    \caption{Graphical description of the pairs of steps in $D_+$ (top left), $D_-$ (top right), $D_+^\ast$ (bottom left), and $D_-^\ast$ (bottom right). The dashed lines denote diagonal projections: in particular, they are vertical translates of the broken diagonal and might not be straight.}
    \label{fig:cdinv}
\end{figure}
\begin{figure}
    \centering
    \begin{tikzpicture}
        \draw[decoratedcolor, very thick] (1,0) -- (1,1);
        \draw[standardcolor, very thick] (3,2) -- (4,2);
        \filldraw[decoratedcolor, fill=white] (1,1) circle (1.5pt);
        \filldraw[decoratedcolor] (1,0) circle (1.5pt);
        \draw[gray, dashed] (1,0.8) -- (3,2) (1,0.2) -- (4,2);
        \node[circle, fill=white, inner sep=0.1, text=decoratedcolor] at (1.2, 0.5) {$\ast$};
    \end{tikzpicture}\hspace{2cm}
    \begin{tikzpicture}
        \draw[decoratedcolor, very thick] (1,0) -- (1,1);
        \draw[standardcolor, very thick] (2,3) -- (3,3);
        \filldraw[standardcolor] (2,3) circle (1.5pt);
        \filldraw[standardcolor, fill=white] (3,3) circle (1.5pt);
        \draw[gray, dashed] (1,1) -- (2.2,3) (1,0) -- (2.8,3);
        \node[circle, fill=white, inner sep=0.1, text=decoratedcolor] at (1.2, 0.5) {$\ast$};
    \end{tikzpicture}%
    \caption{Graphical description of the pairs of steps in $D_+^\ast$ (left), and $D_-^\ast$ (right) in the ENS representation.}
    \label{fig:cdinv_ENS}
\end{figure}

The set $D_+$ in \Cref{def:cdinv} consists of all pairs of vertical and horizontal steps $(i, j)$ such that the vertical step appears after the horizontal step and both endpoints of the horizontal step diagonally project onto the vertical step (top-most endpoint included, bottom-most endpoint excluded);
this can only happen for $n < m$ and if step $j$ is not decorated.

The set $D_-$ consists of all pairs of vertical and horizontal steps $(i, j)$ such that the vertical step is above the horizontal step and both endpoints of the vertical step diagonally project onto the horizontal step
(right-most endpoint included, left-most endpoint excluded);
this can only happen for $n > m$ and if step $j$ is not decorated.

The set $D_+^\ast$ consists of all pairs of horizontal steps $i < j$ such that step $i$ is decorated and
both endpoints of step $j$ diagonally project onto step $i$ (right-most endpoint included, left-most endpoint excluded);
this can only happen for $n < m$ and if step $j$ is not decorated.

The set $D_-^\ast$ consists of all pairs of horizontal steps $i < j$ such that step $i$ is decorated and
both endpoints of step $i$ diagonally project onto step $j$ (right-most endpoint excluded, left-most endpoint included);
this can only happen for $n > m$ and if step $j$ is not decorated.

See Figure~\ref{fig:cdinv}  (cf.\ Figure~\ref{fig:cdinv_ENS}) for an illustration of the definitions of $D_+$, $D_-$, $D_+^\ast$, and $D_-^\ast$.

It is convenient to give an alternative, more compact, definition of the dinv correction.

\begin{definition}
    \label{def:cdinv_alt}
    Let $(\pi, \decoratedsteps)$ be a fall-decorated rectangular path, and let
    \begin{align*}
        C_+ & \coloneqq \left\{ (i, j) \in \verticalsteps \times \horizontalsteps \mid j < i, \; r_j = 0, \text{ and } v_i < v_j \leq v_i + 1 - \textstyle\frac{n}{m} \right\} \\
        C_- & \coloneqq \left\{ (i, j) \in \verticalsteps \times \horizontalsteps \mid j < i \text{ and } v_j \leq v_i < v_j + \textstyle\frac{n}{m} + r_j - 1 \right\} \\
        C^\ast & \coloneqq \left\{ (i, j) \in \decoratedsteps \times (\horizontalsteps \sqcup \decoratedsteps) \mid i < j-1 \text{ and } v_i \leq v_j^+ < v_i + 1 \right\}
    \end{align*}
    where $v_j^+ \coloneqq v_j + \chi_{\decoratedsteps}(j) + \textstyle\frac{n}{m} \chi_{\horizontalsteps}(j)$ is the vertical distance between the left-most point of the horizontal step $j$ and the broken diagonal, which depends on whether or not $j$ is decorated. Here, $\chi_{A}$ denotes the indicator function of the set $A$.
    
    We define the \emph{dinv correction} of $(\pi, \decoratedsteps)$ as
    \[ \cdinv(\pi, \decoratedsteps) \coloneqq \# C_+ - \# C_- + \# C^\ast + \# B. \]
\end{definition}

\begin{figure}
    \centering
    \begin{tikzpicture}
        \draw[standardcolor, very thick] (0,0) -- (1,0) (5,1) -- (5,2);
        \draw[standardcolor, thick, dotted] (-1,0) -- (0,0) -- (0,-1);
        \draw[decoratedcolor] (-0.5, 0.25) node {{\small (no $\ast$)}};
        \filldraw[standardcolor] (5,2) circle (1.5pt);
        \filldraw[standardcolor, fill=white] (5,1) circle (1.5pt);
        \draw[gray, dashed] (1,0) -- (5,1.33) (0,0) -- (5,1.67);
    \end{tikzpicture}\\[1cm]
    \begin{tikzpicture}
        \draw[standardcolor, very thick] (3,0) -- (4,0) (5,2) -- (5,3);
        \draw[decoratedcolor, very thick] (0,0) -- (3,0);
        \filldraw[standardcolor] (4,0) circle (1.5pt);
        \filldraw[decoratedcolor, fill=white] (0,0) circle (1.5pt);
         \draw[decoratedcolor]
            (0.5,0.2) node {$\ast$}
            (1.5,0.2) node {$\ast$}
            (2.5,0.2) node {$\ast$};
        \draw[gray, dashed] (2.1,0) -- (5,3) (3.1,0) -- (5,2);
    \end{tikzpicture}\hspace{2cm}
    \begin{tikzpicture}
        \draw[decoratedcolor, very thick] (0,0) -- (1,0);
        \draw[standardcolor, very thick] (2.5,3) -- (3.5,3);
        \draw[decoratedcolor, very thick] (2.5, 3.08) -- (3.5, 3.08);
        \filldraw[decoratedcolor] (1,0) circle (1.5pt);
        \filldraw[decoratedcolor, fill=white] (0,0) circle (1.5pt);
        \draw[decoratedcolor] (3, 3.28) node {$\ast$};
        \draw[decoratedcolor] (0.5,0.2) node {$\ast$};
        \draw[gray, dashed] (0.6,0) -- (2.5,3);
    \end{tikzpicture}%
    \caption{Graphical description of the pairs of steps in $C_+$ (top), $C_-$ (bottom left), and $C^\ast$ (bottom right). The dashed lines denote the diagonal projections: in particular they are vertical translates of the broken diagonal and might not be straight.}
    \label{fig:cdinv_alt}
\end{figure}

The set $C_+$ in \Cref{def:cdinv_alt} contains the same pairs as the set $D_+$ in \Cref{def:cdinv}, with the additional restriction that the horizontal step is not immediately preceded by a decorated fall.

The set $C_-$ contains the same pairs as the set $D_-$ in \Cref{def:cdinv}, with the relaxed condition that the endpoints of the vertical step diagonally project onto the horizontal step or any of the decorated falls immediately preceding the horizontal step; the horizontal step is required to be non-decorated.

The set $C^\ast$ consists of all pairs of non-consecutive horizontal steps $i < j - 1$ such that step $i$ is decorated and the left-most endpoint of step $j$ diagonally projects onto step $i$ (right-most endpoint included, left-most endpoint excluded).
In the definition, it is important to specify $i < j-1$ (and not simply $i < j$) because otherwise $C^\ast$ would also contain all pairs of consecutive decorated falls, which we want to exclude.

See Figure~\ref{fig:cdinv_alt} (cf.\ Figure~\ref{fig:cdinv_alt_ENS}) for an illustration of the definitions of $C_+$, $C_-$, and $C^\ast$.

\begin{figure}
    \centering
    \begin{tikzpicture}
        \draw[standardcolor, very thick, sharp <-, sharp angle = 45] (3,0) -- (4,0);
        \draw[standardcolor, very thick] (5,2) -- (5,3);
        \draw[decoratedcolor, very thick,        -sharp >, sharp angle = 45] (3,3) -- (3,0);
        \filldraw[standardcolor] (4,0) circle (1.5pt);
        \filldraw[decoratedcolor, fill=white] (3,3) circle (1.5pt);
         \draw[decoratedcolor]
            (3.2,0.5) node {$\ast$}
            (3.2,1.5) node {$\ast$}
            (3.2,2.5) node {$\ast$};
        \draw[gray, dashed] (3,0.93) -- (5,3) (3.1,0) -- (5,2);
    \end{tikzpicture}\hspace{3cm}
    \begin{tikzpicture}
        \draw[decoratedcolor, very thick] (1,1) -- (1,0);
        \draw[standardcolor, very thick] (2.5,3) -- (3.5,3);
        \filldraw[decoratedcolor] (1,0) circle (1.5pt);
        \filldraw[decoratedcolor, fill=white] (1,1) circle (1.5pt);
        \draw[decoratedcolor] (1.2,0.5) node {$\ast$};
        \draw[gray, dashed] (1,0.6) -- (2.5,3);
    \end{tikzpicture}\hspace{2cm}
    \begin{tikzpicture}
        \draw[decoratedcolor, very thick] (1,1) -- (1,0);
        \draw[decoratedcolor, very thick] (2.5, 3) -- (2.5, 2);
        \filldraw[decoratedcolor] (1,0) circle (1.5pt);
        \filldraw[decoratedcolor, fill=white] (1,1) circle (1.5pt);
        \draw[decoratedcolor] (2.7, 2.5) node {$\ast$};
        \draw[decoratedcolor] (1.2,0.5) node {$\ast$};
        \draw[gray, dashed] (1,0.6) -- (2.5,3);
    \end{tikzpicture}%
    \caption{Graphical description of the pairs of steps in $C_-$ (left), and $C^\ast$ (center and right) in the ENS representation.}
    \label{fig:cdinv_alt_ENS}
\end{figure}

\begin{lemma}
    \label{lem:E_def}
    For a path $(\pi, \decoratedsteps) \in \RP(m+k,n+k)_{*k}$, let \[ E = \left\{ (i, j) \in \verticalsteps \times \decoratedsteps \mid j < i \text{ and } v_i < v_j \leq v_i + 1 \right\}, \]
    i.e., the set of pairs of vertical and horizontal steps $(i, j)$ such that the horizontal step is decorated, the vertical step appears after the horizontal step, and the right-most endpoint of the horizontal step diagonally projects onto the vertical step (top-most endpoint excluded, bottom-most endpoint included). Then
    \[ \# E = \sum_{\substack{(i,j) \in \verticalsteps \times \horizontalsteps \\ j < i}} \chi_{[\frac{n}{m}-1,\frac{n}{m}+r_j-1)}(v_i - v_j). \]
\end{lemma}

\begin{proof}
    Let $(i,j) \in \verticalsteps \times \horizontalsteps$, suppose that $r_j \neq 0$, and take $1 \leq p \leq r_j$. By construction, we have $j-p \in \decoratedsteps$, and $j < i$ holds if and only if $j-p < i$, in which case $v_{j-p} = v_j + \frac{n}{m} + p - 1$.
    Therefore
    \[
        \chi_{[-1,0)}(v_i-v_{j-p}) = \chi_{[\frac{n}{m}+p-2,\frac{n}{m}+p-1)}(v_i - v_j).
    \]
    Now,
    \begin{align*}
        \# E & = \sum_{\substack{(i,j) \in \verticalsteps \times \decoratedsteps \\ j < i}} \chi_{[-1,0)}(v_i-v_j) \\
             & = \sum_{\substack{(i,j) \in \verticalsteps \times \horizontalsteps \\ j < i}} \, \sum_{p=1}^{r_j} \chi_{[\frac{n}{m}+p-2,\frac{n}{m}+p-1)}(v_i - v_j) \\
             & = \sum_{\substack{(i,j) \in \verticalsteps \times \horizontalsteps \\ j < i}} \chi_{[\frac{n}{m}-1,\frac{n}{m}+r_j-1)}(v_i - v_j). \qedhere
    \end{align*}
\end{proof}

\begin{proposition}
\label{prop:def_of_cdinv_coincide}
    For each path $(\pi, \decoratedsteps) \in \RP(m+k,n+k)_{*k}$ we have
    \[ \# D_+ - \# D_- + \# D_+^\ast -\# D_-^\ast  -\# B^\ast = \# C_+ - \# C_- + \# C^\ast.\]
    In particular, \Cref{def:cdinv,def:cdinv_alt} are equivalent.
\end{proposition}

\begin{proof}
    Define $E$ as in \Cref{lem:E_def}.
    The statement is implied by the following two equalities:
    \[ \# D_- + \#E = \# D_+ - \# C_+ + \# C_- \quad \text{and} \quad \# C^\ast - \#D_+^\ast + \#D_-^\ast = \# E - \# B^\ast . \]
    Consider the first equality. By \Cref{def:cdinv,def:cdinv_alt} and \Cref{lem:E_def}, we have
    \begin{align}
        \# D_- & = \sum_{\substack{(i,j) \in \verticalsteps \times \horizontalsteps \\ j < i}} \chi_{[0,\frac{n}{m}-1)}(v_i-v_j) \label{eq:d_minus} \\
        \# E & = \sum_{\substack{(i,j) \in \verticalsteps \times \horizontalsteps \\ j < i}} \chi_{[\frac{n}{m}-1,\frac{n}{m}+r_j-1)}(v_i - v_j) \label{eq:e} \\
        \# D_+ - \# C_+ &= \sum_{\substack{(i,j) \in \verticalsteps \times \horizontalsteps \\ j < i}} (1 - \delta_{r_j,0}) \, \chi_{[\frac{n}{m}-1,0)}(v_i-v_j) \label{eq:d_plus_minus_c_plus} \\
        \# C_- & = \sum_{\substack{(i,j) \in \verticalsteps \times \horizontalsteps \\ j < i}} \chi_{[0,\frac{n}{m}+r_j-1)}(v_i-v_j). \label{eq:c_minus}
    \end{align}
    Now, fix a pair $(i,j) \in \verticalsteps \times \horizontalsteps$ with $j < i$. If $r_j = 0$, then the corresponding terms in \eqref{eq:e} and \eqref{eq:d_plus_minus_c_plus} vanish, and the terms in \eqref{eq:d_minus} and \eqref{eq:c_minus} are equal. If $r_j \neq 0$ and $n < m$, the term in \eqref{eq:d_minus} vanishes, and the term in \eqref{eq:e} is the sum of the terms in \eqref{eq:d_plus_minus_c_plus} and \eqref{eq:c_minus}. If $r_j \neq 0$ and $n \geq m$, the term in \eqref{eq:d_plus_minus_c_plus} vanishes, and the term in \eqref{eq:c_minus} is the sum of the terms in \eqref{eq:d_minus} and \eqref{eq:e}.
    This proves the first equality.

    Consider now the second equality. Let
    \[ F = \left\{ (i, j) \in \decoratedsteps \times (\horizontalsteps \sqcup \decoratedsteps) \mid i < j-1 \text{ and }  v_j < v_i \leq v_j^+ \right\}, \]
    and note that
    \[ F \setminus D^\ast_- = \left\{ (i, j) \in \decoratedsteps \times (\horizontalsteps \sqcup \decoratedsteps) \mid i < j-1 \text{ and }  v_j < v_i \leq v_j^+ < v_i + 1 \right\} = C^\ast \setminus D_+^\ast. \]
    Since $D_-^\ast \subseteq F$ and $D_+^\ast \subseteq C^\ast$, we have $\# F = \#C^\ast - \#D_+^\ast + \#D_-^\ast$ and so it is enough to show that $\# E = \# F + \# B^\ast$. Let $(i,j) \in E$; this means that the vertical translate of the broken diagonal passing through the right-most endpoint of $j$ crosses the vertical step $i$ (top-most endpoint excluded, bottom-most endpoint included) at height $v_j$.
    
    If the same vertical translate of the broken diagonal crosses another horizontal step, let $j'> i > j$ be the smallest such step; then $(j,j') \in F$ and this correspondence is one-to-one. If it does not cross any other horizontal step, then we necessarily have $v_j \leq 0$, and for each $j$ such that $v_j \leq 0$ there exists exactly one $i$ with this property, namely $\max \{ i \in \verticalsteps \mid (i,j) \in E \}$. The second equality follows.
\end{proof}

\begin{definition}
    We define the \emph{dinv} of a labeled rectangular path $(\pi, w)$ as \[ \dinv(\pi, w) \coloneqq \tdinv(\pi, w) + \cdinv(\pi).
    \]
\end{definition}
Note that cdinv only depends on the path $\pi$, whereas tdinv also depends on the labels~$w$.

\begin{remark}
    When $k=0$ (i.e.\ there are no decorated falls), the sets $C_+$ and $C_-$ of \Cref{def:cdinv_alt} reduce to the sets in \cite{HicksSergel2015SimplerFormulaNumber}*{Theorem 2 and Figure 3}, which cannot be simultaneously non-empty, and $C_*$ is empty.
    Also note that, when $m=n$ (i.e.\ the path is square), the sets $D_+, D_-, D_+^\ast$, and $D_-^\ast$ of \Cref{def:cdinv} are empty, so the cdinv reduces to $\# B - \# B^\ast$.
    In particular, our dinv statistic extends both the dinv of rectangular (Dyck) paths as defined in \cite{HicksSergel2015SimplerFormulaNumber} and \cite{IraciPagariaPaoliniVandenWyngaerd2023}, and the dinv for rise-decorated Dyck paths of the rise Delta theorem in \cites{HaglundRemmelWilson2018DeltaConjecture, DAdderioMellit2022CompositionalDelta}.
\end{remark}

\section{Combinatorial arguments}
\label{sec:combinatorial-arguments}

\subsection{Combinatorial interpretation of the skewing operator}
\label{ssc:skewing}

Recall that
\begin{equation}
    \label{eq:skewing_x_plus_y}
    g[Y]^\perp f[X+Y] \big|_{Y=0} = g[X]^\perp f[X]
\end{equation}
for any symmetric functions $f, g \in \Lambda$. Readers unfamiliar with plethystic notation can refer to \cite{LoehrRemmel2011ExposePlethystic} for details (cf.\ \cite{Macdonald1995Book}); using the results in \cite{LoehrRemmel2011ExposePlethystic}*{Section~3.2}, \eqref{eq:skewing_x_plus_y} is immediate when $f, g$ are Schur functions, and holds by linearity in the general case.

Let $\alpha$ be any weak composition. We want to apply the skewing operator $h_\alpha^\perp$ to the expression \[ \sum_{\pi \in \LRP(m,n)} q^{\dinv(\pi)} t^{\area(\pi)} x^\pi. \] This operation is well-defined as the expression is a positive sum of LLT polynomials, and so it is a (Schur positive) symmetric function. Using \eqref{eq:skewing_x_plus_y}, we know that \[ h_\alpha^\perp \sum_{\pi \in \LRP(m,n)} q^{\dinv(\pi)} t^{\area(\pi)} x^\pi = h_\alpha[Y]^\perp \left. \left( \sum_{\pi \in \LRP(m,n)} q^{\dinv(\pi)} t^{\area(\pi)} x^\pi \right)\left[X+Y\right] \right\rvert_{Y=0}. \]
The plethystic substitution in the combinatorial term corresponds to the replacement of the alphabet $\mathbb{Z}_+$ with $\mathbb{Z}_+ \cup \overline{\mathbb{Z}}_+$ in the labeling, where we can assume that the labels in $\mathbb{Z}_+$ are smaller than the labels in $\overline{\mathbb{Z}}_+$, since the expression is symmetric.

Now, since homogeneous and monomial symmetric functions are dual to each other, the skewing operator $h_\alpha[Y]^\perp$ followed by the substitution $Y=0$ isolates the terms of the form $f[X] m_\alpha[Y]$ for some $f \in \Lambda$; since the formula is symmetric in $Y$, the coefficient of $m_\alpha[Y]$ is the same as that of $y^\alpha$ when the expression is interpreted as a formal power series in $Y$ with coefficients in $\Lambda$.

For analogous reasons, the same holds for $\LRD(m,n)$. This motivates the following definition.

\begin{definition}
    For $m, n, r \in \mathbb{N}$, and a weak composition $\alpha \vDash r$, we define the sets $\LRD(m,n)^\alpha$ and $\LRP(m,n)^{\alpha}$ to be the sets of rectangular (Dyck) paths of size $m \times n$ with labels in the alphabet $\mathbb{Z}_+ \cup \overline{\mathbb{Z}}_+$ such that there are exactly $\alpha_i$ vertical steps with label $\overline{i}$ for all $i$.
    We refer to the labels in $\mathbb{Z}_+$ as \emph{small} and to the labels in $\overline{\mathbb{Z}}_+$ as \emph{big}.
    For $\pi \in \LRP(m,n)^{\alpha}$, we denote by $\verticalsteps$ the set of vertical steps with small labels and by $\bigsteps$ the set of vertical steps with big labels, so that the set of all steps of $\pi$ is the disjoint union $\horizontalsteps \sqcup \verticalsteps \sqcup \bigsteps$ (totally ordered by traversing the path).    
\end{definition}

As before, set $x^\pi = x^w = \prod_{i \in \verticalsteps} x_{w_i}$; note that this product disregards the big labels.
From the previous discussion, we deduce the following.

\begin{proposition}
    \label{prop:skewing-h}
    For any $m, n, r \in \mathbb{N}$ and $\alpha \vDash r$, we have
    \[ h_\alpha^\perp \sum_{\pi \in \LRP(m,n)} q^{\dinv(\pi)} t^{\area(\pi)} x^\pi = \sum_{\pi \in \LRP(m,n)^{\alpha}} q^{\dinv(\pi)} t^{\area(\pi)} x^\pi, \]
    and the same holds if we replace $\LRP(m,n)$ with $\LRD(m,n)$.
\end{proposition}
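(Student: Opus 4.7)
The plan is to derive both sides as the coefficient of the monomial $y^\alpha$ in $F[X+Y]$, where $F[X] \coloneqq \sum_{\pi \in \LRP(m,n)} q^{\dinv(\pi)} t^{\area(\pi)} x^\pi$. The essential input, which the paragraph preceding the statement invokes, is that $F[X]$ is a symmetric function in $X$. This is standard: grouping the sum by unlabeled path, the $\dinv$ and $\area$ statistics depend only on the relative order of the labels, so each fiber is LLT-positive and hence symmetric.

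Since $\{h_\mu\}$ and $\{m_\mu\}$ are dual bases under the Hall scalar product, every symmetric function admits the coproduct expansion
\[
F[X+Y] \;=\; \sum_{\mu} (h_\mu^\perp F)[X]\, m_\mu[Y].
\]
A monomial $y^\alpha$ appears in $m_\mu[Y]$ with coefficient $1$ exactly when $\mu$ equals the partition $\lambda(\alpha)$ obtained by sorting the nonzero parts of $\alpha$, and $0$ otherwise. Combined with $h_\alpha = h_{\lambda(\alpha)}$, this identifies the coefficient of $y^\alpha$ in $F[X+Y]$ with $(h_\alpha^\perp F)[X]$, the left-hand side of the proposition.

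On the combinatorial side, the plethystic substitution $X \mapsto X + Y$ corresponds to allowing labels in the extended totally ordered alphabet $\mathbb{Z}_+ \cup \overline{\mathbb{Z}}_+$ with $x_{\overline{i}} = y_i$; since $\dinv$ and $\area$ depend only on the order of the labels (not their specific values), this yields
\[
F[X+Y] \;=\; \sum_{\pi} q^{\dinv(\pi)} t^{\area(\pi)} \prod_{i \in \verticalsteps \cup \bigsteps} x_{w_i},
\]
summed over all $\LRP(m,n)$-paths labeled from the extended alphabet. Extracting the coefficient of $y^\alpha$ retains exactly those paths in $\LRP(m,n)^\alpha$, each weighted by $q^{\dinv(\pi)} t^{\area(\pi)} x^\pi$. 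Equating the two expressions proves the claim; the argument for $\LRD(m,n)$ is identical. The only substantive input is the symmetry of $F$; the rest is standard bookkeeping that translates between $h_\alpha^\perp$ as an operator dual to multiplication by $h_\alpha$ and as a plethystic coefficient extractor in a second alphabet.
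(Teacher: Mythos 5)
Your proof is correct and follows essentially the same route as the paper, which deduces the proposition from the discussion immediately preceding it: symmetry of the generating function (as a positive sum of LLT polynomials), the identification of $h_\alpha^\perp$ with extraction of the coefficient of $y^\alpha$ from $F[X+Y]$, and the combinatorial reading of $F[X+Y]$ as the sum over labelings in the extended alphabet. Writing the duality via the coproduct expansion $F[X+Y]=\sum_\mu (h_\mu^\perp F)[X]\, m_\mu[Y]$ is just a slightly more explicit form of the identity $g[Y]^\perp f[X+Y]\big|_{Y=0}=g[X]^\perp f[X]$ that the paper uses.
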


We now want to apply the skewing operator $s_{(m-1)^k}^\perp$ to $e_{m,n+km}$. In order to do this, we need the homogeneous expansion of $s_{(m-1)^k}$. Let us recall the definition of \emph{allowable composition}.

\begin{definition}[{\cite{GillespieGorskyGriffin2025}*{Definition~4.22}}]
    Let $\alpha \vDash k$ be a weak composition with $\ell(\alpha) \leq k$. We say that $\alpha$ is \emph{allowable} if $\alpha_1 > 0$, and each $\alpha_i > 0$ is followed by exactly $\alpha_i - 1$ zeros within the first $k$ entries (i.e.\ $\alpha_i + i - 1 \leq k$ for all $i$, and $\alpha_{i+1} = \dots = \alpha_{i+\alpha_i-1} = 0$ and $\alpha_{i+\alpha_i} \neq 0$ unless $i+\alpha_i - 1 = k$).

    We also define the \emph{sign} of an allowable composition as
    \[ \sgn(\alpha) \coloneqq \sum_{\alpha_i > 0} (\alpha_i - 1) = \# \{1 \leq i \leq k \mid \alpha_i = 0\}. \]
\end{definition}

\begin{remark}
    Allowable compositions of $k$ describe valid rankings of $k$ competitors in a tournament, with ties allowed: if there are $\alpha_i$ competitors tied for $i\th$ place, then the following $\alpha_i - 1$ rankings are not available. Words with \emph{allowable content} (i.e., words such that the content is an allowable composition) are also known as \emph{Fubini rankings}.
    We refer to \href{https://oeis.org/A000670}{OEIS A000670} for related combinatorics.
    
    For example, the word $w = 6313136$ is a Fubini ranking corresponding to a tournament in which the first competitor ranks $6\th$, the second competitor ranks $3\rd$, and so on. There is a two-way tie between competitors $3$ and $5$ for the $1\st$ place, a three-way tie between competitors $2, 4$, and $6$ for the $3\rd$ place, and again a two-way tie between competitors $1$ and $7$ for the $6\th$ place.
\end{remark}

Using the Jacobi-Trudi formula, we get the following.

\begin{proposition}[{\cite{GillespieGorskyGriffin2025}*{Equation~(10)}}]
    \label{prop:schur-expansion}
    For $m, k \in \mathbb{N}$, we have \[ s_{(m-1)^k} = \sum_{\alpha \text{ allowable}} (-1)^{\sgn(\alpha)} h_{\tilde\alpha} \pmod{h_j \mid j > m}, \]
    where $\tilde{\alpha}_i = m - \alpha_i$, and $\sgn(\alpha) = \# \{1 \leq i \leq k \mid \alpha_i = 0\}$ (recall that $h_{a} = 0$ for all $a < 0$).
\end{proposition}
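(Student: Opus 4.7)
The plan is to obtain the identity directly from the Jacobi--Trudi formula, with the only nontrivial work being to identify which permutations contribute modulo $\{h_j : j > m\}$ and to match them with allowable compositions.

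First, I would apply Jacobi--Trudi to the rectangular partition $(m-1)^k$, giving
\[ s_{(m-1)^k} = \det(h_{m-1-i+j})_{1 \leq i,j \leq k} = \sum_{\sigma \in S_k} \sgn(\sigma) \prod_{i=1}^k h_{m-1-i+\sigma(i)}. \]
Modulo the ideal generated by $\{h_j : j > m\}$, the factor $h_{m-1-i+\sigma(i)}$ vanishes whenever $\sigma(i) > i+1$. Hence only permutations satisfying $\sigma(i) \leq i+1$ for every $i$ can contribute.

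Next, I would classify such permutations. Starting at position $1$ we have $\sigma(1) \in \{1,2\}$; an easy induction on $k$ shows that the permutations with $\sigma(i) \leq i+1$ for all $i$ are exactly the products of disjoint cycles of the form $(a, a+1, \ldots, b)$, where the intervals $[a,b]$ partition $\{1, \ldots, k\}$ into consecutive blocks. These are therefore parametrized by ordinary compositions $(r_1, \ldots, r_\ell) \vDash k$ into positive parts, where $r_j$ is the length of the $j$-th cycle read from left to right. A cycle $(a, a+1, \ldots, a+r-1)$ of length $r$ has sign $(-1)^{r-1}$ and contributes $h_m^{r-1} h_{m-r}$ to the product: each of the $r-1$ positions $a, a+1, \ldots, a+r-2$ maps to the next integer and therefore yields $h_m$, while the closing step $\sigma(a+r-1) = a$ yields $h_{m-r}$. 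In total, a permutation with cycle lengths $(r_1, \ldots, r_\ell)$ contributes $(-1)^{k-\ell} \prod_{j=1}^\ell h_m^{r_j - 1} h_{m-r_j}$.

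Finally, I would match this expansion with the allowable-composition sum. Allowable compositions of $k$ are in bijection with ordinary compositions $(r_1, \ldots, r_\ell) \vDash k$ via
\[ (r_1, \ldots, r_\ell) \longmapsto \alpha = (r_1, 0^{r_1 - 1}, r_2, 0^{r_2 - 1}, \ldots, r_\ell, 0^{r_\ell - 1}). \]
Under this correspondence $\sgn(\alpha) = k - \ell$ and $h_{\tilde\alpha} = \prod_{j=1}^\ell h_{m-r_j} h_m^{r_j - 1}$, matching exactly the sign and monomial contributed by the corresponding permutation. The main conceptual step is the classification of permutations with $\sigma(i) \leq i+1$ as consecutive-cycle products; the rest is routine bookkeeping of signs and monomials.
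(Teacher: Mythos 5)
Your proposal is correct and follows exactly the route the paper intends: the paper derives this identity from the Jacobi--Trudi formula (citing Gillespie--Gorsky--Griffin), and your argument simply carries that out in full, correctly classifying the surviving permutations $\sigma(i)\le i+1$ as products of consecutive cycles and matching them bijectively with allowable compositions, signs and all.
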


\subsection{Bijection with rectangular Dyck paths}
\label{ssc:bijection}

Let $m, n, k \in \mathbb{N}$ and $\alpha \vDash k$ allowable. Let $\tilde{\alpha} \vDash k(m-1)$ be defined as $\tilde{\alpha}_i = m - \alpha_i$. In order to compute $s_{(m-1)^k}^\perp e_{m,n+km}$ in terms of our objects, we need a bijection between the set of paths in $\LRP(m,n+km)^{\tilde\alpha}$ and a subset of paths in $\LRP(m+k,n+k)_{\ast k}$ whose labels depend on $\alpha$ (see~\Cref{prop:shape-bijection}). This map is an adaptation of \cite{GillespieGorskyGriffin2025}*{Definition~4.6}. Note that both sets are empty if $\alpha_i > m$ for some $i$.

From now on, to improve readability, we will use $\tilde\pi$ to denote paths of size $m \times (n+km)$ and reserve $\pi$ for (decorated) paths of size $(m+k) \times (n+k)$.

\begin{definition}
    \label{def:shape-bijection}
    Let $\tilde\pi \in \LRP(m,n+km)^{\tilde\alpha}$.
    We define $\psi_0(\tilde\pi) \in \LRP(m+k,n+k)_{\ast k}$ as the path obtained from $\tilde\pi$ by the following procedure:
    \begin{enumerate}
        \item delete all vertical steps $i \in \bigsteps$; %
        \item if immediately before the horizontal step $j \in \horizontalsteps$ there were $a_j$ such steps, replace them with $k-a_j$ decorated falls;
        \item keep everything else in place, including the remaining labels.
    \end{enumerate}      
\end{definition}

\begin{remark}
\label{rmk:fall_labels}
    Via this map, we lose the information given by the big labels.
However, since the labels in each column are a subset of $[\overline{k}]$, and they get replaced by a streak of decorated falls of complementary size, we can recover the information by labeling the decorated falls of $\psi_0(\tilde\pi)$ with the complementary subset of $[\overline{k}]$.
\end{remark}
 This leads to the following definition.

\begin{definition}
    Let $\pi \in \LRP(m+k,n+k)_{\ast k}$. We define a fall-labeling of $\pi$ to be a function $\overline{w} \colon \decoratedsteps \to [\overline{k}] \coloneqq \{\overline{1}, \dots, \overline{k}\}$ such that, if $i, i+1 \in \decoratedsteps$, then $\overline{w}_i < \overline{w}_{i+1}$.
\end{definition}

In other words, we assign a label from $\overline{1}$ to $\overline{k}$ to each decorated fall of the path, in such a way that the labels are strictly increasing from left to right.
The \emph{content} of a fall-labeling is the weak composition $\alpha \vDash k$ where
\[ \alpha_i \coloneqq \# \{ j \in \decoratedsteps \mid \overline{w}_j = \overline{i} \}, \]
that is, the number of times the label $i$ appears on a fall.

\begin{definition}
    Given a fall-labeling $\overline{w}$ of $\pi$, we define the \emph{star word} of $\pi$ as the sequence $w^\ast(\pi, \overline{w})$ of labels of the decorated falls, read from the greatest to the smallest vertical distance, and from right to left in case of a tie. We say that a fall-labeling is \emph{allowable} if it is a Fubini ranking, that is, $w^\ast(\pi, \overline{w})$ has allowable content. See \Cref{fig:star-word} for an example.
\end{definition}

\begin{figure}
    \centering
    \begin{tikzpicture}[scale=0.6]
        \begin{scope}
        
            \draw[step=1.0, gridcolor, thin] (0, 0) grid (12, 8);
    
            \begin{scope}
                \clip (0,0) rectangle (12, 8);
                \draw[diagonalcolor, thin]
                    (0, 0) -- (1, 0.5) -- (2, 1.5) -- (4, 2.5) -- (6, 4.5) -- (8, 5.5) -- (9, 6.5) -- (12, 8);
            \end{scope}
    
            \begin{scope}[standardcolor, very thick]
                \draw[-sharp >, sharp angle = -45] (0, 0) -- (0, 1) -- (1, 1) -- (1, 3);
                \draw (2, 3) -- (3, 3) -- (3, 4) -- (4, 4);
                \draw (6, 4) -- (7, 4) -- (7, 6) -- (8, 6);
                \draw (9, 6) -- (11, 6) -- (11, 8) -- (12, 8);
            \end{scope}
    
            \begin{scope}[decoratedcolor, very thick]
                \draw[sharp <-, sharp angle = -45] (1, 3) -- (2, 3);
                \draw (4, 4) -- (6, 4);
                \draw (8, 6) -- (9, 6);
            \end{scope}

            \begin{scope}[decoratedcolor]
                \draw (1.7, 3.3) node {$\ast$};
                \draw (4.7, 4.3) node {$\ast$};
                \draw (5.7, 4.3) node {$\ast$};
                \draw (8.7, 6.3) node {$\ast$};
            \end{scope}
    
            \begin{scope}[every node/.style = {font=\small}]
                \draw[decoratedcolor]
                    (1.3, 3.35) node {$\overline{2}$}
                    (4.25, 4.35) node {$\overline{2}$}
                    (5.25, 4.35) node {$\overline{4}$}
                    (8.25, 6.35) node {$\overline{1}$}
                    ;

                \draw[standardcolor]
                    (-0.25, 0.5) node {$2$}
                    (0.75, 1.5) node {$1$}
                    (0.75, 2.5) node {$4$}
                    (2.75, 3.5) node {$5$}
                    (6.75, 4.5) node {$2$}
                    (6.75, 5.5) node {$4$}
                    (10.75, 6.5) node {$1$}
                    (10.75, 7.5) node {$5$}
                    ;
            \end{scope}
        \end{scope}
    \end{tikzpicture}
    \caption{A path $\pi \in \LRP(12,8)_{\ast 4}$ with a fall-labeling $\overline{w}$.
    Left to right, the vertical distances of the decorated falls from the broken diagonal are: $\frac32, \frac12, -\frac12, -\frac12$.
    Therefore, the star word is $w^\ast(\pi, \overline{w}) = \overline{2}\,\overline{2}\,\overline{1}\,\overline{4}$; here, the label $\overline{1}$ appears before $\overline{4}$ in the star word because the step labeled $\overline{1}$ appears after the step labeled $\overline{4}$.
    The fall-labeling is allowable because the content of $w^\ast(\pi, \overline{w})$ is $(1,2,0,1)$, which is allowable.}
    \label{fig:star-word}
\end{figure}

Let $\FL(\pi)$ denote the set of allowable fall-labelings of $\pi$, and $\FL(\pi, \alpha)$ the subset of fall-labelings with content $\alpha$. The following proposition is immediate.

\begin{proposition}
    \label{prop:shape-bijection}
    The map $\psi_0$ lifts to a bijection $\psi$ between $\LRP(m,n+km)^{\tilde\alpha}$ and the set \[ \{ (\pi, \overline{w}) \mid \pi \in \LRP(m+k,n+k)_{\ast k}, \; \overline{w} \in \FL(\pi, \alpha) \}, \]
    where $\overline{w}$ is obtained as in \Cref{rmk:fall_labels}.
\end{proposition}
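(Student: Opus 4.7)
My plan is to exhibit the inverse map $\psi^{-1}$ explicitly and verify that both directions are well-defined, with mutual inversion reducing column by column to the involution $S \mapsto [\overline{k}] \setminus S$ on subsets of $[\overline{k}]$. Given $(\pi, \overline{w})$ with $\pi \in \LRP(m+k,n+k)_{\ast k}$ and $\overline{w} \in \FL(\pi, \alpha)$, I define $\psi^{-1}(\pi, \overline{w}) = \tilde\pi$ by reversing the three-step recipe of \Cref{def:shape-bijection}: for each non-decorated horizontal step $j \in \horizontalsteps(\pi)$, let $b_j$ be the number of decorated falls immediately preceding $j$ and let $S_j \subseteq [\overline{k}]$ be the set of their $\overline{w}$-labels; delete those $b_j$ decorated falls and insert $k - b_j$ big vertical steps immediately before $j$ (above any preexisting small-labeled vertical steps in that column), labeled by $[\overline{k}] \setminus S_j$ in increasing order from bottom to top.

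Well-definedness of $\psi$ then follows from step counting: deleting the $\lvert \tilde\alpha \rvert = k(m-1)$ big vertical steps and adding $\sum_j (k - a_j) = mk - k(m-1) = k$ decorated falls produces a path of shape $(m+k) \times (n+k)$ with exactly $k$ decorations. The resulting labels on the decorated falls form a valid fall-labeling because decorations coming from different column-blocks are always separated by the non-decorated horizontal step that closes each block; thus any pair of consecutive decorated falls lies within a single block, where they receive a subset of $[\overline{k}]$ listed in strictly increasing left-to-right order. The content is $\alpha$, since the label $\overline{i}$ appears on a decorated fall in column $j$ exactly when it does not appear among the big labels of that column, yielding a total multiplicity of $m - \tilde\alpha_i = \alpha_i$.

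Symmetrically, $\psi^{-1}$ lands in $\LRP(m, n+km)^{\tilde\alpha}$: the step counts match the shape $m \times (n+km)$, every inserted big label exceeds all preexisting small labels in the alphabet $\mathbb{Z}_+ \cup \overline{\mathbb{Z}}_+$ so strict increase within each column is preserved, and the total multiplicity of the label $\overline{i}$ comes out to $\tilde\alpha_i = m - \alpha_i$ by the same complementation count. Finally, both $\psi \circ \psi^{-1}$ and $\psi^{-1} \circ \psi$ act in each column as the involution $S \mapsto [\overline{k}] \setminus S$, and are therefore the identity. The only point requiring care — the minor obstacle — is to confirm that the phrase ``immediately before $j$'' selects corresponding runs before and after the transformation; this is immediate from the column-preserving nature of the construction and the fact that, in the alphabet $\mathbb{Z}_+ \cup \overline{\mathbb{Z}}_+$, small labels always sit strictly below big labels in each column.
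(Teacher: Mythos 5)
Your proposal is correct and matches the paper's (unstated) argument: the paper declares the proposition immediate from \Cref{def:shape-bijection} together with \Cref{rmk:fall_labels}, and your write-up simply makes explicit the inverse map, the step counts, and the column-wise complementation $S \mapsto [\overline{k}] \setminus S$ that the remark alludes to. The one detail worth keeping in mind (which your appeal to $S_j \subseteq [\overline{k}]$ already covers) is that $a_j \leq k$ in each column, since the big labels there are distinct elements of $[\overline{k}]$ by the strict-increase condition.
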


The definitions of $\dinv$, $\tdinv$, and all the other statistics extend to objects with a fall-labeling by disregarding it.

To better visualize the bijection $\psi$, we introduce an ENS representation for paths in $\LRP(m, n+km)^{\tilde\alpha}$ as follows: before every horizontal step $j$, insert $k$ South steps, where the first $a_j$ are non-decorated and the last $k - a_j$ are decorated (here, $a_j$ is as in \Cref{def:shape-bijection}).
The result is a lattice path from $(0, 0)$ to $(m, n)$ with $m$ East steps (indexed by $\horizontalsteps$), $n+km$ North steps ($n+k$ of them being indexed by $\verticalsteps$, and $k(m-1)$ being indexed by $\bigsteps$), and $km$ South steps.
Denote by $\decoratedsteps$ the set of decorated South steps and by $\southsteps$ the set of non-decorated South steps (naturally in bijection with $\bigsteps$); we have $\# \decoratedsteps = k$ and $\#\southsteps = \#\bigsteps = k(m-1)$.
See \Cref{fig:psi} (top right).

When passing to the ENS representation, the vertical distance between the left-most endpoint of horizontal steps and the main diagonal is reduced by $k$; the vertical distances are preserved for all other endpoints.
As illustrated in \Cref{fig:psi}, the map $\psi$ operates on the ENS representation simply by pruning away the vertical steps in $\bigsteps \sqcup \southsteps$.
The remaining steps $i \in \horizontalsteps \sqcup \verticalsteps \sqcup \decoratedsteps$ are identified with the corresponding steps in $\psi(\tilde\pi)$; when working with both $\tilde\pi$ and $(\pi, \overline{w}) = \psi(\tilde\pi)$, we will use the sets $\horizontalsteps, \verticalsteps, \decoratedsteps$ to denote steps from both paths.

\begin{figure} %
    \centering
    \begin{tikzpicture}[scale=.6]
        \begin{scope}[shift={(0, 0)}]
            \fill[areacolor] (1, 7) rectangle (2, 8);
            \fill[areacolor] (2, 11) rectangle (3, 12);
            
            \draw[step=1.0, gridcolor, thin] (0,0) grid (5,17);
    
            \draw[diagonalcolor] (0, 0) -- (5, 17);
    
            \begin{scope}[standardcolor, very thick]
                \draw (0, 0) -- (0, 1);
                \draw[sharp <-, sharp angle = -45] (0, 4) -- (1, 4) -- (1, 6);
                \draw[sharp <-, sharp angle = -45] (1, 8) -- (2, 8) -- (2, 9);
                \draw[sharp <-, sharp angle = -45] (2, 12) -- (2.5, 12);
                \draw[-sharp >, sharp angle = 45] (2.5, 12) -- (3, 12);
                \draw[sharp <-, sharp angle = -45] (3, 13) -- (4, 13) -- (4, 14);
                \draw[sharp <-, sharp angle = -45] (4, 17) -- (5, 17);
            \end{scope}
    
            \begin{scope}[bigcolor, very thick, -sharp >, sharp angle = -45]
                \draw (0, 1) -- (0, 4);
                \draw (1, 6) -- (1, 8);
                \draw (2, 9) -- (2, 12);
                \draw[sharp <-, sharp angle = 45] (3, 12) -- (3, 12.5);
                \draw (3, 12.5) -- (3, 13);
                \draw (4, 14) -- (4, 17);
            \end{scope}
            
            \begin{scope}[every node/.style = {font=\scriptsize}]
                \draw[bigcolor]
                    (-0.25,1.5) node {$\overline{1}$}
                    (-0.25,2.5) node {$\overline{2}$}
                    (-0.25,3.5) node {$\overline{3}$}
                    (0.75,6.5) node {$\overline{2}$}
                    (0.75,7.5) node {$\overline{3}$}
                    (1.75,9.5) node {$\overline{1}$}
                    (1.75,10.5) node {$\overline{2}$}
                    (1.75,11.5) node {$\overline{3}$}
                    (2.7,12.5) node {$\overline{1}$}
                    (3.75,14.5) node {$\overline{1}$}
                    (3.75,15.5) node {$\overline{2}$}
                    (3.75,16.5) node {$\overline{3}$}
                    ;

                \draw[standardcolor]
                    (-0.25, 0.5) node {$2$}
                    (0.75, 4.5) node {$1$}
                    (0.75, 5.5) node {$4$}
                    (1.75, 8.5) node {$5$}
                    (3.75, 13.5) node {$2$}
                    ;
            \end{scope}
        \end{scope}

        \draw[->] (5.5, 14) to [out=20,in=160] node[midway, above] {\small ENS} (7.5, 14);

        \begin{scope}[shift={(8.5, 11)}]
            \fill[areacolor] (1, 1) rectangle (2, 2);
            \fill[areacolor] (2, 2) rectangle (3, 3);
        
            \draw[step=1.0, gridcolor, thin] (0,0) grid (5,6);
    
            \begin{scope}
                \clip (0,0) rectangle (5,2);
                \draw[diagonalcolor, thin]
                (0, 0) -- (5, 2);
            \end{scope}
    
            \begin{scope}[standardcolor, very thick]
                \draw (-0.05, 0) -- (-0.05,1);
                \draw[sharp <-, sharp angle = 45] (0.05, 1) -- (0.95, 1) -- (0.95, 3);
                \draw[sharp <-, sharp angle = 45] (1.05, 2) -- (1.95, 2) -- (1.95, 3);
                \draw[sharp <-sharp >, sharp angle = 45] (2.05, 3) -- (2.95, 3);
                \draw[sharp <-, sharp angle = 45] (3.05, 1) -- (3.95, 1) -- (3.95, 2);
                \draw[sharp <-, sharp angle = 45] (4.05, 2) -- (5,2);
            \end{scope}
    
            \begin{scope}[bigcolor, very thick]
                \draw (-0.05,1) -- (-0.05, 4) -- (0, 4);
                \draw (0.95, 3) -- (0.95, 5) -- (1, 5);
                \draw (1.95, 3) -- (1.95, 6) -- (2, 6);
                \draw[sharp <-, sharp angle = 45] (2.95, 3) -- (2.95, 4) -- (3, 4);
                \draw (3.95, 2) -- (3.95, 5) -- (4, 5);
            \end{scope}
            
            \begin{scope}[southcolor, very thick]
                \draw[-sharp >, sharp angle = 45] (0, 4) -- (0.05, 4) -- (0.05, 1);
                \draw (1, 5) -- (1.05, 5) -- (1.05, 3);
                \draw[-sharp >, sharp angle = 45] (2, 6) -- (2.05, 6) -- (2.05, 3);
                \draw (3, 4) -- (3.05, 4) -- (3.05, 3);
                \draw[-sharp >, sharp angle = 45] (4, 5) -- (4.05, 5) -- (4.05, 2);
            \end{scope}
    
            \begin{scope}[decoratedcolor, very thick]
                \draw[-sharp >, sharp angle = 45] (1.05, 3) -- (1.05, 2);
                \draw[-sharp >, sharp angle = 45] (3.05, 3) -- (3.05, 1);
            \end{scope}

            \begin{scope}[decoratedcolor]
                \draw (1.25,2.25) node {$\ast$};
                \draw (3.25,2.25) node {$\ast$};
                \draw (3.25,1.25) node {$\ast$};
            \end{scope}
    
            \begin{scope}[every node/.style = {font=\scriptsize}]
                \draw[bigcolor]
                    (-0.25,1.5) node {$\overline{1}$}
                    (-0.25,2.5) node {$\overline{2}$}
                    (-0.25,3.5) node {$\overline{3}$}
                    (0.75,3.5) node {$\overline{2}$}
                    (0.75,4.5) node {$\overline{3}$}
                    (1.75,3.5) node {$\overline{1}$}
                    (1.75,4.5) node {$\overline{2}$}
                    (1.75,5.5) node {$\overline{3}$}
                    (2.7,3.5) node {$\overline{1}$}
                    (3.75,2.5) node {$\overline{1}$}
                    (3.75,3.5) node {$\overline{2}$}
                    (3.75,4.5) node {$\overline{3}$}
                    ;
    
                \draw[decoratedcolor]
                    (1.25, 2.7) node {$\overline{1}$}
                    (3.25, 1.7) node {$\overline{3}$}
                    (3.25, 2.7) node {$\overline{2}$}
                    ;

                \draw[standardcolor]
                    (-0.25, 0.5) node {$2$}
                    (0.75, 1.5) node {$1$}
                    (0.75, 2.5) node {$4$}
                    (1.75, 2.5) node {$5$}
                    (3.75, 1.5) node {$2$}
                    ;

            \end{scope}
        \end{scope}

        \draw[->] (11, 10.5) -- node[midway, right] {\small $\psi$} (11, 9.5);

        \begin{scope}[shift={(8.5, 6)}]
            \fill[areacolor] (1, 1) rectangle (2, 2);
            \fill[areacolor] (2, 2) rectangle (3, 3);
        
            \draw[step=1.0, gridcolor, thin] (0,0) grid (5,3);
    
            \begin{scope}
                \clip (0,0) rectangle (5,2);
                \draw[diagonalcolor, thin]
                (0, 0) -- (5, 2);
            \end{scope}
    
            \begin{scope}[standardcolor, very thick]
                \draw (0, 0) -- (0, 1) -- (0.95, 1) -- (0.95, 3) -- (1, 3);
                \draw[sharp <-, sharp angle = 45] (1.05, 2) -- (2, 2) -- (2, 2.5);
                \draw[-sharp >, sharp angle = -45] (2, 2.5) -- (2, 3) -- (3, 3);
                \draw[sharp <-, sharp angle = 45] (3, 1) -- (4, 1) -- (4, 2) -- (5, 2);
            \end{scope}
    
            \begin{scope}[decoratedcolor, very thick]
                \draw[-sharp >, sharp angle = 45] (1, 3) -- (1.05, 3) -- (1.05, 2);
                \draw[sharp <-, sharp angle = -45] (3, 3) -- (3, 2);
                \draw[-sharp >, sharp angle = 45] (3, 2) -- (3, 1);
            \end{scope}

            \begin{scope}[decoratedcolor]
                \draw (1.25,2.25) node {$\ast$};
                \draw (3.25,2.25) node {$\ast$};
                \draw (3.25,1.25) node {$\ast$};
            \end{scope}
    
            \begin{scope}[every node/.style = {font=\scriptsize}]
                \draw[decoratedcolor]
                    (1.25, 2.7) node {$\overline{1}$}
                    (3.25, 1.7) node {$\overline{3}$}
                    (3.25, 2.7) node {$\overline{2}$}
                    ;

                \draw[standardcolor]
                    (-0.25, 0.5) node {$2$}
                    (0.75, 1.5) node {$1$}
                    (0.75, 2.5) node {$4$}
                    (1.75, 2.5) node {$5$}
                    (3.75, 1.5) node {$2$}
                    ;
            \end{scope}
        \end{scope}

        \draw[->] (14.5, 7) to [out=20,in=70] node[midway, above right] {\small ENS$^{-1}$} (15.5, 5.5);

        \begin{scope}[shift={(7, 0)}]
            \fill[areacolor] (2, 2) rectangle (3, 3);
            \fill[areacolor] (3, 3) rectangle (4, 4);
        
            \draw[step=1.0, gridcolor, thin] (0, 0) grid (8, 5);
    
            \begin{scope}
                \clip (0,0) rectangle (8, 5);
                \draw[diagonalcolor, thin]
                    (0, 0) -- (1, 0.4) -- (2, 1.4) -- (4, 2.2) -- (6, 4.2) -- (8, 5);
            \end{scope}
    
            \begin{scope}[standardcolor, very thick]
                \draw[-sharp >, sharp angle = -45] (0, 0) -- (0, 1) -- (1, 1) -- (1, 3);
                \draw (2, 3) -- (3, 3) -- (3, 4) -- (4, 4);
                \draw (6, 4) -- (7, 4) -- (7, 5) -- (8, 5);
            \end{scope}
    
            \begin{scope}[decoratedcolor, very thick]
                \draw[sharp <-, sharp angle = -45] (1, 3) -- (2, 3);
                \draw (4, 4) -- (6, 4);
            \end{scope}

            \begin{scope}[decoratedcolor]
                \draw (1.7, 3.3) node {$\ast$};
                \draw (4.7, 4.3) node {$\ast$};
                \draw (5.7, 4.3) node {$\ast$};
            \end{scope}
    
            \begin{scope}[every node/.style = {font=\scriptsize}]
                \draw[decoratedcolor]
                    (1.3, 3.35) node {$\overline{1}$}
                    (4.25, 4.35) node {$\overline{2}$}
                    (5.25, 4.35) node {$\overline{3}$}
                    ;

                \draw[standardcolor]
                    (-0.25, 0.5) node {$2$}
                    (0.75, 1.5) node {$1$}
                    (0.75, 2.5) node {$4$}
                    (2.75, 3.5) node {$5$}
                    (6.75, 4.5) node {$2$}
                    ;
            \end{scope}
        \end{scope}
    \end{tikzpicture}

    \caption{
        An example of the map $\psi$ in \Cref{def:shape-bijection} for $m=5$, $n=2$, $k=3$, and $\tilde\alpha = (4,4, 4)$.
        On the left, a path $\tilde\pi \in \LRP(m, n+km)^{\tilde\alpha}$ with its big labels (and the corresponding vertical steps $i \in \bigsteps$) highlighted in red.
        On the top right, the ENS representation of $\tilde\pi$.
        On the center right, the ENS representation of $(\pi, \overline{w}) = \psi(\tilde\pi)$; it is obtained from the ENS representation of $\tilde\pi$ by pruning away the steps in $\bigsteps \sqcup \southsteps$. 
        On the bottom right, the standard representation of $\pi$.
        In all paths, the two squares contributing to the area are highlighted in gray, and the fall-labeling $\overline{w}$ is shown next to the decorated steps.
        In this example, the star word is $w^\ast(\pi, \overline{w}) = \overline{1} \, \overline{2}\, \overline{3}$.
    }
    \label{fig:psi}
\end{figure}

From the previous description of $\psi$, the following lemma is immediate.

\begin{lemma}
    \label{lem:preserve-vdistance}
    Let $\tilde\pi \in \LRP(m,n+km)^{\tilde\alpha}$.
    For any step $i \in \horizontalsteps \sqcup \verticalsteps$ (horizontal, or vertical with a small label), its vertical distance is preserved by $\psi$:
    \[ v_i(\tilde\pi) = v_i(\psi(\tilde\pi)). \]
    In particular, we have $\area(\tilde\pi) = \area(\psi(\tilde\pi))$ and $\shift(\tilde\pi) = \shift(\psi(\tilde\pi))$.
    Therefore, $\psi$ restricts to a bijection between $\LRD(m,n+km)^{\tilde\alpha}$ and the set
    \[ \{ (\pi, \overline{w}) \mid \pi \in \LRD(m+k,n+k)_{\ast k}, \, \overline{w} \in \FL(\pi, \alpha) \}. \]
\end{lemma}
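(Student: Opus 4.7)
The plan is to work entirely in the ENS representation, where $\psi$ acts simply by pruning the vertical steps in $\bigsteps$ and the South steps in $\southsteps$, while keeping the positions of all other steps intact. Both ENS pictures live inside the $m \times n$ rectangle, so they share the same main diagonal $my = nx$, and, as recalled just before the statement, the vertical distances in ENS coincide with the vertical distances from the broken diagonal in the standard representation. Hence it suffices to show that the heights of the steps in $\horizontalsteps \sqcup \verticalsteps \sqcup \decoratedsteps$ are not affected by the pruning.

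First I would analyze each ``block'' of steps between two consecutive East steps in the ENS of $\tilde\pi$. Fix a horizontal step $j \in \horizontalsteps$ and let $s_j$ (resp.\ $a_j$) denote the number of small (resp.\ big) vertical steps of $\tilde\pi$ appearing immediately before East step $j$. By construction of the ENS, the block for $j$ in $\tilde\pi$ consists, in order, of $s_j$ Norths in $\verticalsteps$, $a_j$ Norths in $\bigsteps$, $a_j$ Souths in $\southsteps$, and $k - a_j$ Souths in $\decoratedsteps$; in $\psi(\tilde\pi)$ the same block retains only the $s_j$ small Norths followed by the $k - a_j$ decorated Souths.

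The key observation is then that the $a_j$ steps in $\bigsteps$ and the $a_j$ steps in $\southsteps$ cancel each other block by block. More precisely, for every step $i \in \horizontalsteps \sqcup \verticalsteps \sqcup \decoratedsteps$, the number of steps of $\bigsteps$ preceding $i$ in the ENS of $\tilde\pi$ equals the number of steps of $\southsteps$ preceding $i$: for a small North or a decorated South inside the $j$-th block, the big Norths and non-decorated Souths of that block either both precede $i$ or both follow $i$; for an East step this holds because entire blocks lie before it. Consequently, the height and the horizontal position of any such $i$ are the same in the ENS of $\tilde\pi$ and in the ENS of $\psi(\tilde\pi)$, which yields $v_i(\tilde\pi) = v_i(\psi(\tilde\pi))$.

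Once this is established, the remaining claims follow routinely: $\shift(\tilde\pi) = \shift(\psi(\tilde\pi))$ since the shift only depends on $\{v_i \mid i \in \horizontalsteps\}$; $\area(\tilde\pi) = \area(\psi(\tilde\pi))$ since the area is $\sum_{i \in \horizontalsteps} \lfloor v_i + \shift \rfloor$; and the Dyck condition is exactly $v_i \geq 0$ for all $i \in \horizontalsteps$, which is manifestly preserved. I do not expect any real obstacle here; the only subtlety is the block-by-block bookkeeping needed to verify the cancellation simultaneously for small Norths, decorated Souths, and East steps.
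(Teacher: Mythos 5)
Your proposal is correct and follows the same route as the paper, which treats this lemma as immediate from the ENS description of $\psi$ as pruning the steps in $\bigsteps \sqcup \southsteps$; your block-by-block count showing that every remaining step is preceded by equally many pruned North and pruned South steps is exactly the bookkeeping that makes that "immediate" claim rigorous. The only point worth spelling out slightly more is that for the decorated path the Dyck condition also reduces to $v_j \geq 0$ for $j \in \horizontalsteps$, because within a run of horizontal steps the vertical distance decreases from left to right, so the non-decorated step ending each streak of decorated falls gives the binding constraint.
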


\begin{remark}
    \label{rmk:falldinv}
    Given $i, j \in \verticalsteps$, we have $i \rightarrow j$ in $\tilde\pi$ if and only if $i \rightarrow j$ in $(\pi, \overline{w}) = \psi(\tilde\pi)$.
    Therefore, the pair $(i, j)$ contributes to $\tdinv(\tilde\pi)$ if and only if it contributes to $\tdinv(\pi)$.
    The remaining contributions to $\tdinv(\tilde\pi)$ are due to attacking pairs $(i, j) \in (\verticalsteps \sqcup \bigsteps) \times \bigsteps$.
    We call $\falldinv(\pi, \overline{w})$ the number of such contributions, so that
    \[ \tdinv(\tilde\pi) = \tdinv(\pi) + \falldinv(\pi, \overline{w}). \]
\end{remark}

\subsection{Sign-reversing involution}
\label{ssc:involution}

Let $\pi \in \LRP(m+k,n+k)_{\ast k}$, and for $\overline{w} \in \FL(\pi)$, let $\sgn(\overline{w})$ be the sign of its content. We aim to define a sign-reversing involution on $\FL(\pi)$ such that $\overline{w}$ is a fixed point if and only if $w^\ast(\pi, \overline{w}) = \overline{1} \, \overline{2} \cdots \overline{k}$.

Let us recall the sign-reversing involution $\varphi$ on words with allowable content
from \cite{GillespieGorskyGriffin2025}.

\begin{definition}[{\cite{GillespieGorskyGriffin2025}*{Definition~4.28}}]
    Let $a$
    be any word of length $k$ with allowable content. Let $p$ be the largest entry in $a$ such that
    \begin{samepage}
    \begin{itemize}
        \item there is only one $p$, and
        \item if $q$ is the largest entry in $a$ that is less than $p$, then $p$ is to the left of every $q$.
    \end{itemize}
    \end{samepage}
    Notice that such a $p$ might not exist. Let $r$ be the largest repeated entry in $a$, and let $s$ be the smallest entry larger than $r$ in $a$, or $s = k+1$ if $r$ is already the largest. Note that $p \neq r$. Then we define $\varphi(a)$ as the word obtained from $a$ by applying the following procedure:
    \begin{itemize}
        \item[\textit{Case 1.}] If $p > r$ or $r$ does not exist, replace $p$ with an occurrence of $q$.
        \item[\textit{Case 2.}] If $p < r$ or $p$ does not exist, replace the first occurrence of $r$ with $s-1$.
        \item[\textit{Case 3.}] If neither $p$ nor $r$ exist, do nothing.
    \end{itemize}
\end{definition}

We have the following result.

\begin{theorem}[{\cite{GillespieGorskyGriffin2025}*{Theorem~4.30}}]
    \label{thm:involution}
    The map $\varphi$ is a sign-reversing involution on words with allowable content, preserves tied inversions (i.e.\ if $i < j$, then $a_i \geq a_j \iff \varphi(a)_i \geq \varphi(a)_j$), and its unique fixed point is $1 2 \cdots k$, which has positive sign.
\end{theorem}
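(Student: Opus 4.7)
Although this statement is cited from \cite{GillespieGorskyGriffin2025}*{Theorem~4.30}, here is how I would organize a direct argument. The bookkeeping device is the bijection between allowable compositions $\alpha$ of $k$ with $\ell(\alpha) \leq k$ and set-compositions of $[k]$: if the positive entries of $\alpha$ occur at positions $i_1 < \cdots < i_l$, the blocks are $[i_j, i_{j+1})$ of length $\alpha_{i_j} = i_{j+1} - i_j$ (setting $i_{l+1} = k+1$). Under this bijection $\sgn(\alpha) = k - l$ is (a shift of) the block count, so the sign-reversing claim reduces to showing Case 1 merges two adjacent blocks while Case 2 splits a block. In Case 1 the unique $p$ has $\alpha_p = 1$, so $p$ starts a singleton block preceded by the block starting at $q$; changing the $p$ into a $q$ merges these two blocks. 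In Case 2 the position $s-1$ is the last position of a block (the block at $r$ when $s \leq k$, or $s - 1 = k$ when $s = k + 1$), hence turning $\alpha_{s-1}$ from $0$ into $1$ splits that block. Well-definedness and the sign-reversing property both follow.

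The main obstacle is to verify $\varphi \circ \varphi = \operatorname{id}$. After Case 1 is applied to $a$, the newly created $q$ sits at the old position of $p$; by the defining property of $p$ (being to the left of every $q$ in $a$) this is the \emph{first} occurrence of $q$ in $\varphi(a)$. Because $p > r$ in $a$ and the multiplicities of entries above $p$ are unchanged, $q$ becomes the largest repeated entry in $\varphi(a)$, so $r' = q$. Since no entry of $a$ lies strictly between $q$ and $p$ (block structure), the smallest entry of $\varphi(a)$ strictly greater than $q$ is $p+1$, or $k+1$ if $p$ was the largest value in $a$; in either case $s' - 1 = p$. Hence Case 2 applied to $\varphi(a)$ replaces the first $q$ with $p$, recovering $a$. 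The reverse direction is symmetric: after Case 2 on $a$, the newly placed $s-1$ is the unique entry and the largest value for which the $p$-condition holds, so Case 1 undoes Case 2.

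For the fixed-point analysis, non-existence of $r$ forces all entries of $a$ to be distinct, so the content has all parts in $\{0,1\}$; allowability then forces $l = k$, i.e.\ every value in $\{1,\dots,k\}$ appears exactly once. Non-existence of $p$ forces each $v \geq 2$ to have $v-1$ to its left, and by induction on $v$ this gives $a = 12\cdots k$, which has $\sgn = 0$ and hence positive sign. Finally, tied-inversion preservation reduces to the observation that only one entry changes, from $p$ to $q$ in Case 1 or from $r$ to $s-1$ in Case 2, and the block structure guarantees that no other entry of $a$ lies in the open interval $(q,p)$ or $(r,s-1)$; the order between the changed entry and any unchanged entry is therefore preserved. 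The principal risk in this plan is the precise identification of $(r',s')$ after the move, but the block picture keeps the bookkeeping honest.
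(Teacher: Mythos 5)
This statement is imported verbatim from Gillespie--Gorsky--Griffin (their Theorem~4.30); the paper offers no proof of its own, so there is nothing internal to compare against. Your reading of allowable contents as interval set-compositions of $[k]$ --- with Case~1 merging a singleton block into the block below it and Case~2 splitting off the top element of the block of $r$ --- is the right picture, it makes the sign-reversal ($\sgn(\alpha)=k-\ell(\alpha)$ counts the non-block-starts) and the allowability of the output immediate, and it is essentially the argument of the cited source. (Note in passing that the definition as transcribed in this paper has two typos, ``larger than $m$'' for ``larger than $r$'' and ``$p<k$'' for ``$p<r$'', which you have silently and correctly repaired; also, as written, the smallest present value is a vacuous candidate for $p$ when it has no entry below it, which would stop $12\cdots k$ from being a fixed point --- one must read the definition as requiring $q$ to exist.)

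Two steps in your plan need more work than you give them. First, for $\varphi\circ\varphi=\mathrm{id}$ it is not enough to compute $(r',s')$ and observe that Case~2 \emph{would} undo Case~1: you must also check that Case~2 is the case that fires on $\varphi(a)$, i.e.\ that no candidate $p'$ with $p'>r'=q$ survives. This does hold --- every value $v>p$ of $a$ has multiplicity one (else $r\geq v>p$) and already failed the leftmost test in $a$; the only such $v$ whose ``largest entry below it'' changes is the smallest value above $p$, and it still fails because it lies to the right of the old position of $p$, which now carries a $q$ --- but it is a separate verification, with a symmetric check needed after Case~2. Second, in the tied-inversion argument, ``no present value lies strictly in $(q,p)$'' does not by itself dispose of the boundary case where the unchanged entry $a_j$ \emph{equals} the new value ($q$ in Case~1, $r$ in Case~2): there the comparison against the changed entry degenerates from strict to tied, and the biconditional $a_i\geq a_j\iff\varphi(a)_i\geq\varphi(a)_j$ survives only because of the positional constraints (the unique $p$ lies to the left of every $q$; the replaced $r$ is the leftmost $r$) combined with the fact that the claim only compares in the direction $i<j$. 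Neither point is a wrong turn, but both are where a complete write-up must do real work, beyond the $(r',s')$ bookkeeping you flagged.
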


We can lift $\varphi$ to $\FL(\pi)$ by applying it to $w^\ast(\pi, \overline{w})$, using the alphabet $\overline{\Z}_+$. The fixed point of $\varphi$ is the unique fall-labeling of $\pi$ such that $w^\ast(\pi, \overline{w}) = \overline{1} \, \overline{2} \cdots \overline{k}$.

\begin{proposition}
    \label{prop:dinv-involution}
    For $\pi \in \LRP(m+k,n+k)_{\ast k}$, and for $\overline{w} \in \FL(\pi)$, we have
    \[ \falldinv(\pi, \overline{w}) = \falldinv(\pi, \varphi(\overline{w})). \]
\end{proposition}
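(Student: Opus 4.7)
The plan is to decompose $\falldinv(\pi, \overline{w})$ into a piece depending only on $\pi$ and a piece depending on $\overline{w}$ only through the tied-inversion pattern of the star word, and then to conclude via \Cref{thm:involution}. Write $\falldinv = T_1 + T_2$ with $T_1 = \#\{(i,j) \in \verticalsteps \times \bigsteps : i \to j\}$ (no label condition is needed since small labels precede big ones in our alphabet) and $T_2 = \#\{(i,j) \in \bigsteps^2 : i \to j,\ w_i < w_j\}$.

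First I would observe that $T_1$ depends only on $\pi$. The number $a_c$ of big vertical steps in column $c$ of $\tilde\pi := \psi^{-1}(\pi, \overline{w})$ equals $k - r_c$, where $r_c$ is the number of decorated falls of $\pi$ immediately preceding the horizontal step corresponding to column $c$, and the number $v_c$ of small vertical steps in column $c$ is likewise determined by $\pi$. Hence all step heights and vertical distances, and in particular the attack relation on $\verticalsteps \cup \bigsteps$, are $\pi$-determined.

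For $T_2$, the set of attacking pairs of big positions remains $\pi$-determined; only the labels at those positions depend on $\overline{w}$, through the rule that the $p$-th big position from the bottom of column $c$ carries the $p$-th smallest element of $B_c := [\overline{k}] \setminus F_c$, where $F_c$ is the set of fall labels of decorated falls in column $c$. It suffices to check $T_2(\pi, \overline{w}) = T_2(\pi, \varphi(\overline{w}))$ for a single application of $\varphi$. Consider Case 1, which replaces one fall label $\overline{p}$ (at a decorated fall $d$ in some column $c_d$) by $\overline{q}$, with $q$ the largest entry less than $p$ in the star word. By the Case 1 hypothesis, the labels $\overline{q+1}, \ldots, \overline{p-1}$ do not appear in the star word and are hence big in every column; consequently in column $c_d$ they sit at consecutive ranks $j_q, \ldots, r := j_q + p - q - 1$ of $B_{c_d}$, and the swap shifts the labels at those positions monotonically from $\overline{q}, \overline{q+1}, \ldots, \overline{p-1}$ to $\overline{q+1}, \overline{q+2}, \ldots, \overline{p}$ at the same geometric heights. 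Writing $\Delta T_2$ as a signed sum over $l \in [q, p-1]$ of $A_l - B_l$, where $A_l$ counts incoming attacks to the big position originally labeled $\overline{l}$ from external big positions with label $\overline{l}$, and $B_l$ counts outgoing attacks from the same position to external big positions with label $\overline{l+1}$, a direct check of the lexicographic attack condition (using that consecutive big positions in $c_d$ differ in vertical distance by exactly $1$) shows that for any external $P$ with $w(P) = \overline{l+1}$ one has $P_l \to P \iff P \to P_{l+1}$; hence $A_{l+1} = B_l$ for $l \in [q, p-2]$, telescoping $\Delta T_2$ to the boundary expression $A_q - B_{p-1}$.

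The main obstacle is to prove the boundary equality $A_q = B_{p-1}$. My plan is a column-by-column analysis. In external columns where $\overline{q}$ is big, the per-column contributions to $A_q$ and $B_{p-1}$ agree by a computation parallel to the interior telescoping, since both attacks are governed by the same threshold on $\sigma_c' := \sigma_c + \rho_{c_d}(q) - \rho_c(q)$, where $\sigma_c$ encodes the $\pi$-dependent offset between the two columns. In external columns where $\overline{q}$ is a fall label, the contribution to $A_q$ vanishes, and the contribution to $B_{p-1}$ must also vanish; this is where the Case 1 hypothesis—that every occurrence of $\overline{q}$ in the star word lies strictly to the right of the unique $\overline{p}$, i.e., that the corresponding decorated falls have vertical distance strictly smaller than $d$ in $\pi$—forces the attack $P_{p-1} \to P_c^{(p)}$ to fail, via a translation into a vertical-distance inequality that is precisely a tied-inversion comparison preserved under $\varphi$ by \Cref{thm:involution}. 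Case 2 of $\varphi$ is handled by a dual telescoping, with $s-1$ and $r$ playing the roles of $p$ and $q$. Combining these, $T_2(\pi, \overline{w}) = T_2(\pi, \varphi(\overline{w}))$, and the proposition follows.
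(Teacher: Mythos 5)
Your proposal is correct in substance, and it is worth noting how it relates to what the paper actually does: the paper's own proof is two lines long --- it observes (via \Cref{rmk:falldinv}) that the claim is equivalent to $\tdinv(\psi^{-1}(\pi,\overline{w})) = \tdinv(\psi^{-1}(\pi,\varphi(\overline{w})))$ and then defers entirely to ``the same argument as'' Lemma~4.6 of Gillespie--Gorsky--Griffin. You have essentially supplied the argument that the paper outsources. Your decomposition $\falldinv = T_1 + T_2$ with $T_1$ (small attacker, big target) depending only on $\pi$ is right, since the shape of $\psi^{-1}(\pi,\overline{w})$ is determined by $\pi$ alone and small labels are uniformly below big ones; and the interior telescoping $A_{l+1}=B_l$ is correct, because $P_l$ and $P_{l+1}$ are adjacent steps at consecutive heights in the same column, so for any step $P$ in a different column the lexicographic conditions for $P_l \to P$ and $P \to P_{l+1}$ coincide.

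Two points in your writeup deserve tightening. First, for the block $\overline{q},\dots,\overline{p-1}$ to occupy consecutive ranks of $B_{c_d}$ you also need $\overline{q}\notin F_{c_d}$; this does follow from the Case~1 hypothesis (a decorated fall labelled $\overline{q}$ in the same streak as $d$ would precede $d$ in the star word), but you should say so. Second, in the boundary case where $\overline{q}$ is a fall label of an external column $c$, the input that kills the attack $P_{p-1}\to Q_c^{(p)}$ is the \emph{defining property of Case~1} (the unique $\overline{p}$ precedes every $\overline{q}$ in the star-word order, including the tie-breaking convention), not the tied-inversion preservation of \Cref{thm:involution}; the computation works because the big step labelled $\overline{p}$ (resp.\ $\overline{p-1}$) sits at the same offset $p-1$ above the decorated fall labelled $\overline{p}$ (resp.\ $\overline{q}$) in each of the two columns, so $v_{P_{p-1}} - v_{Q_c^{(p)}} = v_d - v_{d'} \geq 0$, and the tied case is excluded by the positional tie-break. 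Also note that ``strictly smaller vertical distance'' is not quite the Case~1 hypothesis --- ties broken by position are allowed and must be handled. Finally, Case~2 need not be re-done by a dual telescoping: since $\varphi$ is an involution exchanging Case~1 and Case~2 words, the Case~2 instances follow by applying the Case~1 identity to $\varphi(\overline{w})$.
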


\begin{proof}
    By definition, this is equivalent to saying that $\tdinv(\psi^{-1}(\pi, \overline{w})) = \tdinv(\psi^{-1}(\pi, \varphi(\overline{w})))$. By the same argument as \cite{GillespieGorskyGriffin2025}*{Lemma~4.6}, the result follows.
\end{proof}

\section{Computing the dinv}
\label{sec:dinv}

This section is dedicated to the proof of the following theorem.

\begin{theorem}
    \label{thm:bijection-dinv}
    Let $\pi \in \LRP(m+k,n+k)_{\ast k}$, and let $\overline{w} \in \FL(\pi)$ be the fall-labeling such that $w^\ast(\pi, \overline{w}) = \overline{1} \, \overline{2} \cdots \overline{k}$. Let $\tilde{\pi} = \psi^{-1}(\pi, \overline{w})$. Then
    \[ \dinv(\pi) = \dinv(\tilde\pi). \]
\end{theorem}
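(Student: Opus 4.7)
Since $\dinv = \tdinv + \cdinv$ and Remark~\ref{rmk:falldinv} gives $\tdinv(\tilde\pi) = \tdinv(\pi) + \falldinv(\pi, \overline{w})$, the theorem is equivalent to the equality
\[
\cdinv(\pi) - \cdinv(\tilde\pi) = \falldinv(\pi, \overline{w}).
\]
So the task is to match the change in cdinv against the attacking pairs counted by $\falldinv$.

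\textbf{Expansion of both sides.} Expand $\cdinv(\pi)$ using Definition~\ref{def:cdinv_alt} (or Definition~\ref{def:cdinv} together with Lemma~\ref{lm:def_of_cdinv_coincide}) and $\cdinv(\tilde\pi)$ using Definition~\ref{def:cdinv}. Since $\tilde\pi$ has no decorations, its cdinv only involves $C_+(\tilde\pi), C_-(\tilde\pi), B(\tilde\pi)$, with slope parameter $(n+km)/m$ rather than $n/m$; for $k \geq 1$ the set $C_+(\tilde\pi)$ is in fact empty. Since $v_i$ is preserved on $\horizontalsteps \sqcup \verticalsteps$ by Lemma~\ref{lem:preserve-vdistance}, one has $B(\tilde\pi) \setminus B(\pi) = \{i \in \bigsteps : v_i(\tilde\pi) < 0\}$. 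For $\falldinv(\pi, \overline{w})$, the identity fall-labeling determines the labels of $\bigsteps$: the $a_j$ big-labeled vertical steps above column $j$ of $\tilde\pi$ carry the labels in $\{\overline{1}, \ldots, \overline{k}\}$ complementary to those of the decorated falls in column $j$ of $\pi$, with the intra-column order forced by strict increase. These latter labels are globally determined by sorting all decorated falls by decreasing vertical distance (with top-to-bottom tiebreaker), so the condition ``$w_i < w_j$'' translates into an explicit condition on vertical distances.

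\textbf{Case-by-case matching.} Split the attacking pairs $(i,j)$ counted by $\falldinv$ according to whether $i \in \verticalsteps$ or $i \in \bigsteps$ (with $j \in \bigsteps$ in both cases). In the first case, $w_i < w_j$ is automatic, so one simply counts attacks $i \to j$ in $\tilde\pi$ between a small vertical and a big vertical; these should match the discrepancy between the slope-$n/m$ counts ($C_\pm(\pi)$) and the slope-$(n+km)/m$ counts ($C_-(\tilde\pi)$), together with the $C^*(\pi)$ contributions coming from decorated-fall interactions with other horizontal steps. In the second case, $w_i < w_j$ corresponds to an ordering of the associated decorated falls by vertical distance; these pairs are matched with the remaining $C^*(\pi)$ contributions together with $\# B(\tilde\pi) - \# B(\pi)$.

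\textbf{Main obstacle.} The delicate step is the explicit computation of $v_j(\tilde\pi)$ for $j \in \bigsteps$ and the verification that the attack intervals at slope $(n+km)/m$ partition cleanly into pieces corresponding to slope-$n/m$ attack intervals in $\pi$ and $C^*$-type intervals, with the identity-labeling tiebreakers matching the lexicographic tiebreakers in the definition of $i \to j$. An induction on $k$, peeling off the decorated fall of $\pi$ with label $\overline{k}$ (equivalently, the decorated fall with smallest vertical distance) together with its associated big-labeled vertical steps in $\tilde\pi$, should streamline this analysis by reducing each step to a single-column local check.
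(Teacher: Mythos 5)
Your reduction is exactly the paper's: subtract the $\tdinv$ identity of Remark~\ref{rmk:falldinv} and reduce to $\cdinv(\pi) = \cdinv(\tilde\pi) + \falldinv(\pi,\overline{w})$, note that $C_+(\tilde\pi)$ is empty for $k\geq 1$ so only $\tilde C_-$ and $\tilde B$ survive, split everything according to small versus big labels, and observe that the small-label part of $\tilde B$ coincides with $B(\pi)$. Up to that point you are on track. But the substance of the theorem is precisely the two cancellations that you defer with ``these should match'': (i) the identity relating the slope-$\frac{n}{m}$ counts $C_\pm(\pi)$ to the slope-$(\frac{n}{m}+k)$ count coming from small-labeled steps, and (ii) the identity
\[
\# C^* + \# \tilde{C}_{\mathrm{b}} - \# F_{\mathrm{b}}^{\swarrow} - \# F^{\nearrow} = \# \tilde{B}_{\mathrm{b}},
\]
where the subscript $\mathrm{b}$ denotes the big-label contributions. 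Neither is verified in your proposal, and neither is routine.

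For (i), the paper groups the big-labeled steps attacked from the right by a small-labeled step according to the horizontal step that follows them, and then exhibits the two disjoint unions $C_+ \sqcup \tilde C_{\mathrm{s}}$ and $C_- \sqcup F_{\mathrm{s}}'$ as equal to one and the same explicitly described set of pairs; this is a concrete interval-splitting computation, not an automatic consequence of the definitions. For (ii), the paper's argument is genuinely geometric: for each big-labeled step $i$ it introduces the half-line $\ell_i$ parallel to the main diagonal ending at the bottom endpoint of $i$, and a filtered family of subpaths $\pi_s$; the left-hand side is interpreted as a signed count of crossings of $\ell_i$ with $\pi_{w_i}$, which must equal $0$ or $1$ according to whether $\ell_i$ starts above or below the path, and the $+1$'s are exactly counted by $\tilde B_{\mathrm{b}}$. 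Your proposal contains no mechanism playing this role. The suggested induction on $k$ (peeling off the decorated fall labeled $\overline{k}$ together with its $m-1$ associated big steps) is a plausible alternative strategy, but it is not carried out: removing a decorated fall changes the broken diagonal and hence the vertical distances and the membership of pairs in $C^*$, $D_\pm^\ast$, and the attack relations in columns to the right, so the claim that each inductive step is ``a single-column local check'' is itself the hard part and would need a proof. As it stands, the proposal is a correct plan with the central argument missing.
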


Before proceeding with the proof, let us make some remarks and introduce some notation. First, notice that if $k=0$, then $\psi$ is the identity and \Cref{thm:bijection-dinv} holds, so let us assume $k > 0$.

To lighten the notation, we denote sets referring to the path $\pi$ (and to $\overline{w}$) with plain letters, and sets referring to $\tilde{\pi}$ with a tilde. For example, $C_+ \coloneqq C_+(\pi)$, $\tilde{C}_- \coloneqq C_-(\tilde{\pi})$, and so on.
Using the ENS representation for both paths, we can think of $\tilde\pi$ as the path consisting of the steps $\horizontalsteps \sqcup \verticalsteps \sqcup \decoratedsteps \sqcup \bigsteps \sqcup \southsteps$ and of $\pi$ as the subpath consisting of the steps $\horizontalsteps \sqcup \verticalsteps \sqcup \decoratedsteps$ (see \Cref{fig:psi}).

Recall (cf.~\Cref{def:cdinv_alt}) that \[ \cdinv(\tilde \pi) \coloneqq \# \tilde{C}_+ - \# \tilde{C}_- + \# \tilde{C}^\ast + \# \tilde{B}. \] Since decorated steps of (the ENS representation of) $\tilde\pi$ do not contribute to $\cdinv(\tilde \pi)$, we have $\tilde{C}^\ast = \varnothing$;
since $k>0$, we have $m < n + km$ and therefore $\tilde{C}_+ = \varnothing$.
Let us partition $\tilde{C}_-$ and $\tilde B$ based on the contributions of $\verticalsteps$ (steps with small labels) and $\bigsteps$ (steps with big labels) as follows:
    \begin{align*}
        \tilde{C}_{\text{s}} & \coloneqq \left\{ (i, j) \in \verticalsteps \times \horizontalsteps \mid j < i \text{ and } v_j \leq v_i < v_j + \textstyle\frac{n}{m} + k - 1 \right\} \\
        \tilde{C}_{\text{b}} & \coloneqq \left\{ (i, j) \in \bigsteps \times \horizontalsteps \mid j < i \text{ and } v_j \leq v_i < v_j + \textstyle\frac{n}{m} + k - 1 \right\} \\
        \tilde{B}_{\text{s}} & \coloneqq \left\{ i \in \verticalsteps \mid v_i < 0 \right\} \\
        \tilde{B}_{\text{b}} & \coloneqq \left\{ i \in \bigsteps \mid v_i < 0 \right\}.
    \end{align*}

\begin{remark}
    In the previous expressions for $\tilde{C}_{\text{s}}$ and $\tilde{C}_{\text{b}}$, the $+\,k$ term appears because the slope of the main diagonal of~$\tilde\pi$ is
    \[ \textstyle \frac{n+km}{m} = \frac{n}{m} + k. \]
    In the ENS representation, this means that we are counting the pairs $(i, j) \in \verticalsteps \times \horizontalsteps$ (resp.\ $\bigsteps \times \horizontalsteps$) such that both endpoints of step $i$ diagonally project onto step $j$ or onto any of the $k$ South steps immediately preceding it (right-most endpoint included, left-most endpoint excluded).
    See \Cref{fig:cdinv_example} for an example.
\end{remark}

\begin{figure}
    \centering
    \begin{tikzpicture}[scale=.6]
        \draw[step=1.0, gridcolor, thin] (0,0) grid (5,6);

        \begin{scope}
            \clip (0,0) rectangle (5,2);
            \draw[diagonalcolor, thin]
            (0, 0) -- (5, 2);
        \end{scope}

        \begin{scope}
            \clip (1,1) rectangle (4,4);
            \draw[gray, thin, dashed]
            (1, 1) -- (6, 3)
            (1, 2) -- (6, 4);
        \end{scope}

        \begin{scope}[standardcolor, very thick]
            \draw (-0.05, 0) -- (-0.05,1);
            \draw[sharp <-, sharp angle = 45] (0.05, 1) -- (0.95, 1) -- (0.95, 3);
            \draw[sharp <-, sharp angle = 45] (1.05, 2) -- (1.95, 2) -- (1.95, 3);
            \draw[sharp <-sharp >, sharp angle = 45] (2.05, 3) -- (2.95, 3);
            \draw[sharp <-, sharp angle = 45] (3.05, 1) -- (3.95, 1) -- (3.95, 2);
            \draw[sharp <-, sharp angle = 45] (4.05, 2) -- (5,2);
        \end{scope}

        \begin{scope}[bigcolor, very thick]
            \draw (-0.05,1) -- (-0.05, 4) -- (0, 4);
            \draw (0.95, 3) -- (0.95, 5) -- (1, 5);
            \draw (1.95, 3) -- (1.95, 6) -- (2, 6);
            \draw[sharp <-, sharp angle = 45] (2.95, 3) -- (2.95, 4) -- (3, 4);
            \draw (3.95, 2) -- (3.95, 5) -- (4, 5);
        \end{scope}
        
        \begin{scope}[southcolor, very thick]
            \draw[-sharp >, sharp angle = 45] (0, 4) -- (0.05, 4) -- (0.05, 1);
            \draw (1, 5) -- (1.05, 5) -- (1.05, 3);
            \draw[-sharp >, sharp angle = 45] (2, 6) -- (2.05, 6) -- (2.05, 3);
            \draw (3, 4) -- (3.05, 4) -- (3.05, 3);
            \draw[-sharp >, sharp angle = 45] (4, 5) -- (4.05, 5) -- (4.05, 2);
        \end{scope}

        \begin{scope}[decoratedcolor, very thick]
            \draw[-sharp >, sharp angle = 45] (1.05, 3) -- (1.05, 2);
            \draw[-sharp >, sharp angle = 45] (3.05, 3) -- (3.05, 1);
        \end{scope}

        \begin{scope}[black, very thick]
            \draw (0.95, 1) -- (0.95, 2) (4.05, 2) -- (5, 2);            
        \end{scope}
        
        \begin{scope}[decoratedcolor]
            \draw (1.25,2.5) node {$\ast$};
            \draw (3.25,2.5) node {$\ast$};
            \draw (3.25,1.5) node {$\ast$};
        \end{scope}

        \begin{scope}[every node/.style = {font=\footnotesize}]

            \draw[black]                
                (0.75, 1.5) node {$i$}
                (4.5, 2.3) node {$j$}
                ;
        \end{scope}
    \end{tikzpicture}
    \caption{
        A pair of steps $(i, j) \in \tilde{C}_{\text{s}}$.
        The endpoints of $i \in \verticalsteps$ both diagonally project onto one of the three South steps preceding $j\in \horizontalsteps$.
    }
    \label{fig:cdinv_example}
\end{figure}

Partition also the contributions to $\falldinv(\pi, \overline{w})$ based on the attack relations as follows:
    \begin{align*}
        F_{\text{s}}^\swarrow & \coloneqq \left\{ (i, j) \in \verticalsteps \times \bigsteps \mid j < i \text{ and } i \rightarrow j \right\} \\
        F_{\text{b}}^\swarrow & \coloneqq \left\{ (i, j) \in \bigsteps \times \bigsteps \mid j < i, \; w_i < w_j, \text{ and } i \rightarrow j \right\} \\
        F^\nearrow & \coloneqq \left\{ (i, j) \in (\verticalsteps \sqcup \bigsteps) \times \bigsteps \mid i < j, \; w_i < w_j, \text{ and } i \rightarrow j \right\}.
    \end{align*}

The set $F_{\text{s}}^\swarrow$ (resp.\ $F_{\text{b}}^\swarrow$) consists of all pairs of vertical steps $(i, j)$ such that step $i$ has a small (resp.\ big) label, step $j$ has a big label, and $i$ attacks $j$ from the right.
The set $F^\nearrow$ consists of all pairs of vertical steps $(i, j)$ where $j$ has a big label and $i$ attacks $j$ from the left.

For $s \in [\overline{k}]$, let $\pi_s$ be the subpath obtained from the ENS representation of $\tilde\pi$ by removing all steps $i \in \bigsteps$ with $w_i \geq s$ and the corresponding steps $i' \in \southsteps$ (recall that there is a natural bijection between $\bigsteps$ and $\southsteps$ sending $i \in \bigsteps$ to the step $i' \in \southsteps$ with the same endpoints).
Note that $\pi_{\overline{1}} = \pi$ and $\pi_{\overline{k+1}} = \tilde\pi$.
Two examples are shown in \Cref{fig:dinv-proof,fig:dinv-proof-example2}.

\begin{figure}
    \centering
    \begin{tikzpicture}[scale=.6]
        \begin{scope}[shift={(0, 0)}]
            \draw[step=1.0, gridcolor, thin] (0,0) grid (5,6);
    
            \begin{scope}[standardcolor, very thick]
                \draw (-0.05, 0) -- (-0.05,1);
                \draw[sharp <-, sharp angle = 45] (0.05, 1) -- (0.95, 1) -- (0.95, 3);
                \draw[sharp <-, sharp angle = 45] (1.05, 2) -- (1.95, 2) -- (1.95, 3);
                \draw[sharp <-sharp >, sharp angle = 45] (2.05, 3) -- (2.95, 3);
                \draw[sharp <-, sharp angle = 45] (3.05, 1) -- (3.95, 1) -- (3.95, 2);
                \draw[sharp <-, sharp angle = 45] (4.05, 2) -- (5,2);
            \end{scope}
    
            \begin{scope}[bigcolor, very thick]
                \draw (-0.05,1) -- (-0.05, 3) -- (0, 3);
                \draw (0.95, 3) -- (0.95, 4) -- (1, 4);
                \draw (1.95, 3) -- (1.95, 5) -- (2, 5);
                \draw[sharp <-, sharp angle = 45] (2.95, 3) -- (2.95, 4) -- (3, 4);
                \draw (3.95, 2) -- (3.95, 4) -- (4, 4);

                \begin{scope}[bigcolor!50]
                    \draw (-0.05, 3) -- (-0.05, 4);
                    \draw (0.95, 4) -- (0.95, 5);
                    \draw (1.95, 5) -- (1.95, 6);
                    \draw (3.95, 4) -- (3.95, 5);
                \end{scope}
            \end{scope}

            \begin{scope}[gray]
                \clip (-1.5, 0) rectangle (5, 6);
                \draw (0, 3) -- (-5, 1);
                \draw (1, 4) -- (-4, 2);
                \draw (2, 5) -- (-3, 3);
                \draw (4, 4) -- (-6, 0);
            \end{scope}
            
            \begin{scope}[southcolor, very thick]
                \draw[-sharp >, sharp angle = 45] (0, 3) -- (0.05, 3) -- (0.05, 1);
                \draw (1, 4) -- (1.05, 4) -- (1.05, 3);
                \draw[-sharp >, sharp angle = 45] (2, 5) -- (2.05, 5) -- (2.05, 3);
                \draw (3, 4) -- (3.05, 4) -- (3.05, 3);
                \draw[-sharp >, sharp angle = 45] (4, 4) -- (4.05, 4) -- (4.05, 2);
            \end{scope}
    
            \begin{scope}[decoratedcolor, very thick]
                \draw[-sharp >, sharp angle = 45] (1.05, 3) -- (1.05, 2);
                \draw[-sharp >, sharp angle = 45] (3.05, 3) -- (3.05, 1);
            \end{scope}

            \begin{scope}[decoratedcolor]
                \draw (1.25,2.25) node {$\ast$};
                \draw (3.25,2.25) node {$\ast$};
                \draw (3.25,1.25) node {$\ast$};
            \end{scope}
    
            \begin{scope}[every node/.style = {font=\scriptsize}]
                \draw[bigcolor]
                    (-0.25,1.5) node {$\overline{1}$}
                    (-0.25,2.5) node {$\overline{2}$}
                    (-0.25,3.5) node {$\overline{3}$}
                    (0.75,3.5) node {$\overline{2}$}
                    (0.75,4.5) node {$\overline{3}$}
                    (1.75,3.5) node {$\overline{1}$}
                    (1.75,4.5) node {$\overline{2}$}
                    (1.75,5.5) node {$\overline{3}$}
                    (2.7,3.5) node {$\overline{1}$}
                    (3.75,2.5) node {$\overline{1}$}
                    (3.75,3.5) node {$\overline{2}$}
                    (3.75,4.5) node {$\overline{3}$}
                    ;
    
                \draw[decoratedcolor]
                    (1.25, 2.7) node {$\overline{1}$}
                    (3.25, 1.7) node {$\overline{3}$}
                    (3.25, 2.7) node {$\overline{2}$}
                    ;

            \end{scope}
        \end{scope}

        \begin{scope}[shift={(8, 0)}]
            \draw[step=1.0, gridcolor, thin] (0,0) grid (5,6);
    
            \begin{scope}[standardcolor, very thick]
                \draw (-0.05, 0) -- (-0.05,1);
                \draw[sharp <-, sharp angle = 45] (0.05, 1) -- (0.95, 1) -- (0.95, 3) -- (1, 3);
                \draw[sharp <-, sharp angle = 45] (1.05, 2) -- (1.95, 2) -- (1.95, 3);
                \draw[sharp <-sharp >, sharp angle = 45] (2.05, 3) -- (2.95, 3);
                \draw[sharp <-, sharp angle = 45] (3.05, 1) -- (3.95, 1) -- (3.95, 2);
                \draw[sharp <-, sharp angle = 45] (4.05, 2) -- (5,2);
            \end{scope}
    
            \begin{scope}[bigcolor, very thick]
                \draw (-0.05, 1) -- (-0.05, 2) -- (0, 2);
                \draw (0.95, 3) -- (0.95, 3) -- (1, 3);
                \draw (1.95, 3) -- (1.95, 4) -- (2, 4);
                \draw[sharp <-, sharp angle = 45] (2.95, 3) -- (2.95, 4) -- (3, 4);
                \draw (3.95, 2) -- (3.95, 3) -- (4, 3);

                \begin{scope}[bigcolor!50]
                    \draw (-0.05, 2) -- (-0.05, 3);
                    \draw (0.95, 3) -- (0.95, 4);
                    \draw (1.95, 4) -- (1.95, 5);
                    \draw (3.95, 3) -- (3.95, 4);
                \end{scope}
            \end{scope}

            \begin{scope}[gray]
                \clip (-1.5, 0) rectangle (5, 6);
                \draw (0, 2) -- (-5, 0);
                \draw (1, 3) -- (-4, 1);
                \draw (2, 4) -- (-3, 2);
                \draw (4, 3) -- (-6, -1);
            \end{scope}
            
            \begin{scope}[southcolor, very thick]
                \draw[-sharp >, sharp angle = 45] (0, 2) -- (0.05, 2) -- (0.05, 1);
                \draw (1, 3) -- (1.05, 3) -- (1.05, 3);
                \draw[-sharp >, sharp angle = 45] (2, 4) -- (2.05, 4) -- (2.05, 3);
                \draw (3, 4) -- (3.05, 4) -- (3.05, 3);
                \draw[-sharp >, sharp angle = 45] (4, 3) -- (4.05, 3) -- (4.05, 2);
            \end{scope}
    
            \begin{scope}[decoratedcolor, very thick]
                \draw[-sharp >, sharp angle = 45] (1, 3) -- (1.05, 3) -- (1.05, 2);
                \draw[-sharp >, sharp angle = 45] (3.05, 3) -- (3.05, 1);
            \end{scope}

            \begin{scope}[decoratedcolor]
                \draw (1.25,2.25) node {$\ast$};
                \draw (3.25,2.25) node {$\ast$};
                \draw (3.25,1.25) node {$\ast$};
            \end{scope}
    
            \begin{scope}[every node/.style = {font=\scriptsize}]
                \draw[bigcolor]
                    (-0.25,1.5) node {$\overline{1}$}
                    (-0.25,2.5) node {$\overline{2}$}
                    (0.75,3.5) node {$\overline{2}$}
                    (1.75,3.5) node {$\overline{1}$}
                    (1.75,4.5) node {$\overline{2}$}
                    (2.7,3.5) node {$\overline{1}$}
                    (3.75,2.5) node {$\overline{1}$}
                    (3.75,3.5) node {$\overline{2}$}
                    ;
    
                \draw[decoratedcolor]
                    (1.25, 2.7) node {$\overline{1}$}
                    (3.25, 1.7) node {$\overline{3}$}
                    (3.25, 2.7) node {$\overline{2}$}
                    ;

            \end{scope}
        \end{scope}

        \begin{scope}[shift={(16, 0)}]
            \draw[step=1.0, gridcolor, thin] (0,0) grid (5,6);

            \begin{scope}[standardcolor, very thick]
                \draw (0, 0) -- (0,1) -- (0.95, 1) -- (0.95, 3) -- (1, 3);
                \draw[sharp <-, sharp angle = 45] (1.05, 2) -- (2, 2) -- (2, 3) -- (3, 3);
                \draw[sharp <-, sharp angle = 45] (3, 1) -- (4, 1) -- (4, 2) -- (5, 2);
            \end{scope}
    
            \begin{scope}[bigcolor!50, very thick]
                    \draw (0, 1) -- (0, 2);
                    \draw (2, 3) -- (2, 4);
                    \draw (3, 3) -- (3, 4);
                    \draw (4, 2) -- (4, 3);
            \end{scope}

            \begin{scope}[gray]
                \clip (-1.5, -1) rectangle (5, 6);
                \draw (0, 1) -- (-5, -1);
                \draw (2, 3) -- (-3, 1);
                \draw (3, 3) -- (-2, 1);
                \draw (4, 2) -- (-6, -2);
            \end{scope}
    
            \begin{scope}[decoratedcolor, very thick]
                \draw[-sharp >, sharp angle = 45] (1, 3) -- (1.05, 3) -- (1.05, 2);
                \draw[-sharp >, sharp angle = 45] (3, 3) -- (3, 1);
            \end{scope}

            \begin{scope}[decoratedcolor]
                \draw (1.25,2.25) node {$\ast$};
                \draw (3.25,2.25) node {$\ast$};
                \draw (3.25,1.25) node {$\ast$};
            \end{scope}
    
            \begin{scope}[every node/.style = {font=\scriptsize}]
                \draw[bigcolor]
                    (-0.25,1.5) node {$\overline{1}$}
                    (1.75,3.5) node {$\overline{1}$}
                    (2.7,3.5) node {$\overline{1}$}
                    (3.75,2.5) node {$\overline{1}$}
                    ;
    
                \draw[decoratedcolor]
                    (1.25, 2.7) node {$\overline{1}$}
                    (3.25, 1.7) node {$\overline{3}$}
                    (3.25, 2.7) node {$\overline{2}$}
                    ;

            \end{scope}
        \end{scope}

        \draw (2.5, -1) node {$\pi_{\overline{3}}$};
        \draw (10.5, -1) node {$\pi_{\overline{2}}$};
        \draw (18.5, -1) node {$\pi_{\overline{1}}$};
    \end{tikzpicture}
    \caption{
        The subpaths $\pi_s$ constructed from the paths of \Cref{fig:psi}.
        The half-lines $\ell_i$ and the corresponding steps $i \in \bigsteps$ with $w_i = s$ are also shown.
        We omit the small labels.
    }
    \label{fig:dinv-proof}
\end{figure}

\begin{figure}
    \centering
    \begin{tikzpicture}[scale=.6]
        \begin{scope}[shift={(0, 0)}]
            \draw[step=1.0, gridcolor, thin] (0,-1) grid (5,5);
    
            \begin{scope}[standardcolor, very thick]
                \draw (-0.05, 0) -- (-0.05,1);
                \draw[sharp <-sharp >, sharp angle = 45] (0.05, 1) -- (0.95, 1);
                \draw[sharp <-, sharp angle = 45] (1.05, -1) -- (1.95, -1) -- (1.95, 0);
                \draw[sharp <-, sharp angle = 45] (2.05, 0) -- (2.95, 0) -- (2.95, 1);
                \draw[sharp <-, sharp angle = 45] (3.05, 0) -- (3.95, 0) -- (3.95, 2);
                \draw[sharp <-, sharp angle = 45] (4.05, 2) -- (5,2);
            \end{scope}
    
            \begin{scope}[bigcolor, very thick]
                \draw (-0.05,1) -- (-0.05, 3) -- (0, 3);
                \draw[sharp <-, sharp angle = 45] (0.95, 1) -- (0.95, 2) -- (1, 2);
                \draw (1.95, 0) -- (1.95, 2) -- (2, 2);
                \draw (2.95, 1) -- (2.95, 2) -- (3, 2);
                \draw (3.95, 2) -- (3.95, 4) -- (4, 4);

                \begin{scope}[bigcolor!50]
                    \draw (-0.05, 3) -- (-0.05, 4);
                    \draw (1.95, 2) -- (1.95, 3);
                    \draw (2.95, 2) -- (2.95, 3);
                    \draw (3.95, 4) -- (3.95, 5);
                \end{scope}
            \end{scope}

            \begin{scope}[gray]
                \clip (-1.5, 0) rectangle (5, 6);
                \draw (0, 3) -- (-5, 1);
                \draw (2, 2) -- (-3, 0);
                \draw (3, 2) -- (-2, 0);
                \draw (4, 4) -- (-6, 0);
            \end{scope}
            
            \begin{scope}[southcolor, very thick]
                \draw[-sharp >, sharp angle = 45] (0, 3) -- (0.05, 3) -- (0.05, 1);
                \draw (1, 2) -- (1.05, 2) -- (1.05, 1);
                \draw[-sharp >, sharp angle = 45] (2, 2) -- (2.05, 2) -- (2.05, 0);
                \draw (3, 2) -- (3.05, 2) -- (3.05, 1);
                \draw[-sharp >, sharp angle = 45] (4, 4) -- (4.05, 4) -- (4.05, 2);
            \end{scope}
    
            \begin{scope}[decoratedcolor, very thick]
                \draw[-sharp >, sharp angle = 45] (1.05, 1) -- (1.05, -1);
                \draw[-sharp >, sharp angle = 45] (3.05, 1) -- (3.05, 0);
            \end{scope}

            \begin{scope}[decoratedcolor]
                \draw (1.25,0.25) node {$\ast$};
                \draw (1.25,-0.75) node {$\ast$};
                \draw (3.25,0.25) node {$\ast$};
            \end{scope}
    
            \begin{scope}[every node/.style = {font=\scriptsize}]
                \draw[bigcolor]
                    (-0.25,1.5) node {$\overline{1}$}
                    (-0.25,2.5) node {$\overline{2}$}
                    (-0.25,3.5) node {$\overline{3}$}
                    (0.75,1.5) node {$\overline{2}$}
                    (1.75,0.5) node {$\overline{1}$}
                    (1.75,1.5) node {$\overline{2}$}
                    (1.75,2.5) node {$\overline{3}$}
                    (2.75,1.5) node {$\overline{1}$}
                    (2.75,2.5) node {$\overline{3}$}
                    (3.75,2.5) node {$\overline{1}$}
                    (3.75,3.5) node {$\overline{2}$}
                    (3.75,4.5) node {$\overline{3}$}
                    ;
    
                \draw[decoratedcolor]
                    (1.25, 0.7) node {$\overline{1}$}
                    (1.25, -0.3) node {$\overline{3}$}
                    (3.25, 0.7) node {$\overline{2}$}
                    ;

            \end{scope}
        \end{scope}

        \begin{scope}[shift={(8, 0)}]
            \draw[step=1.0, gridcolor, thin] (0,-1) grid (5,5);
    
            \begin{scope}[standardcolor, very thick]
                \draw (-0.05, 0) -- (-0.05,1);
                \draw[sharp <-, sharp angle = 45] (0.05, 1) -- (0.95, 1);
                \draw[sharp <-, sharp angle = 45] (1.05, -1) -- (1.95, -1) -- (1.95, 0);
                \draw[sharp <-, sharp angle = 45] (2.05, 0) -- (2.95, 0) -- (2.95, 1);
                \draw[sharp <-, sharp angle = 45] (3.05, 0) -- (3.95, 0) -- (3.95, 2);
                \draw[sharp <-, sharp angle = 45] (4.05, 2) -- (5,2);
            \end{scope}
    
            \begin{scope}[bigcolor, very thick]
                \draw (-0.05, 1) -- (-0.05, 2) -- (0, 2);
                \draw (1.95, 0) -- (1.95, 1) -- (2, 1);
                \draw (2.95, 1) -- (2.95, 2) -- (3, 2);
                \draw (3.95, 2) -- (3.95, 3) -- (4, 3);

                \begin{scope}[bigcolor!50]
                    \draw (-0.05, 2) -- (-0.05, 3);
                    \draw (0.95, 1) -- (0.95, 2);
                    \draw (1.95, 1) -- (1.95, 2);
                    \draw (3.95, 3) -- (3.95, 4);
                \end{scope}
            \end{scope}

            \begin{scope}[gray]
                \clip (-1.5, -1) rectangle (5, 6);
                \draw (0, 2) -- (-5, 0);
                \draw (1, 1) -- (-4, -1);
                \draw (2, 1) -- (-3, -1);
                \draw (4, 3) -- (-6, -1);
            \end{scope}
            
            \begin{scope}[southcolor, very thick]
                \draw[-sharp >, sharp angle = 45] (0, 2) -- (0.05, 2) -- (0.05, 1);
                \draw[-sharp >, sharp angle = 45] (2, 1) -- (2.05, 1) -- (2.05, 0);
                \draw (3, 2) -- (3.05, 2) -- (3.05, 1);
                \draw[-sharp >, sharp angle = 45] (4, 3) -- (4.05, 3) -- (4.05, 2);
            \end{scope}
    
            \begin{scope}[decoratedcolor, very thick]
                \draw[-sharp >, sharp angle = 45] (0.95, 1) -- (1.05, 1) -- (1.05, -1);
                \draw[-sharp >, sharp angle = 45] (3.05, 1) -- (3.05, 0);
            \end{scope}

            \begin{scope}[decoratedcolor]
                \draw (1.25,0.25) node {$\ast$};
                \draw (1.25,-0.75) node {$\ast$};
                \draw (3.25,0.25) node {$\ast$};
            \end{scope}
    
            \begin{scope}[every node/.style = {font=\scriptsize}]
                \draw[bigcolor]
                    (-0.25,1.5) node {$\overline{1}$}
                    (-0.25,2.5) node {$\overline{2}$}
                    (0.75,1.5) node {$\overline{2}$}
                    (1.75,0.5) node {$\overline{1}$}
                    (1.75,1.5) node {$\overline{2}$}
                    (2.75,1.5) node {$\overline{1}$}
                    (3.75,2.5) node {$\overline{1}$}
                    (3.75,3.5) node {$\overline{2}$}
                    ;
    
                \draw[decoratedcolor]
                    (1.25, 0.7) node {$\overline{1}$}
                    (1.25, -0.3) node {$\overline{3}$}
                    (3.25, 0.7) node {$\overline{2}$}
                    ;

            \end{scope}
        \end{scope}

        \begin{scope}[shift={(16, 0)}]
            \draw[step=1.0, gridcolor, thin] (0,-1) grid (5,5);
    
            \begin{scope}[standardcolor, very thick]
                \draw[-sharp >, sharp angle = -45] (0, 0) -- (0, 1) -- (1, 1);
                \draw[sharp <-, sharp angle = 45] (1, -1) -- (2, -1) -- (2, 0) -- (2.95, 0) -- (2.95, 1) -- (3, 1);
                \draw[sharp <-, sharp angle = 45] (3.05, 0) -- (4, 0) -- (4, 2) -- (5,2);
            \end{scope}
    
            \begin{scope}[bigcolor, very thick]

                \begin{scope}[bigcolor!50]
                    \draw (0, 1) -- (0, 2);
                    \draw (2, 0) -- (2, 1);
                    \draw (2.95, 1) -- (2.95, 2);
                    \draw (4, 2) -- (4, 3);
                \end{scope}
            \end{scope}

            \begin{scope}[gray]
                \clip (-1.5, -2) rectangle (5, 6);
                \draw (0, 1) -- (-5, -1);
                \draw (2, 0) -- (-3, -2);
                \draw (3, 1) -- (-2, -1);
                \draw (4, 2) -- (-6, -2);
            \end{scope}
            
            \begin{scope}[southcolor, very thick]
            \end{scope}
    
            \begin{scope}[decoratedcolor, very thick]
                \draw[sharp <-sharp >, sharp < angle = 45, sharp > angle = 45] (1, 1) -- (1, -1);
                \draw[-sharp >, sharp angle = 45] (3, 1) -- (3.05, 1) -- (3.05, 0);
            \end{scope}

            \begin{scope}[decoratedcolor]
                \draw (1.25,0.25) node {$\ast$};
                \draw (1.25,-0.75) node {$\ast$};
                \draw (3.25,0.25) node {$\ast$};
            \end{scope}
    
            \begin{scope}[every node/.style = {font=\scriptsize}]
                \draw[bigcolor]
                    (-0.25,1.5) node {$\overline{1}$}
                    (1.75,0.5) node {$\overline{1}$}
                    (2.75,1.5) node {$\overline{1}$}
                    (3.75,2.5) node {$\overline{1}$}
                    ;
    
                \draw[decoratedcolor]
                    (1.25, 0.7) node {$\overline{1}$}
                    (1.25, -0.3) node {$\overline{3}$}
                    (3.25, 0.7) node {$\overline{2}$}
                    ;

            \end{scope}
        \end{scope}

        \draw (2.5, -2) node {$\pi_{\overline{3}}$};
        \draw (10.5, -2) node {$\pi_{\overline{2}}$};
        \draw (18.5, -2) node {$\pi_{\overline{1}}$};
    \end{tikzpicture}
    \caption{
        The subpaths $\pi_s$ and half-lines $\ell_i$ in another example with $m=5$, $n=2$, $k=3$, and $w^\ast(\pi, \overline{w}) = \overline{1} \, \overline{2}\, \overline{3}$.
        In the third picture, two half-lines start below the path, so the set $\tilde{B}_{\text{b}}$ consists of two steps with label $\overline{1}$.
        In this example, part of the path (in the ENS representation) lies below the $x$-axis.
    }
    \label{fig:dinv-proof-example2}
\end{figure}

For $i \in \bigsteps$, denote by $\ell_i$ the half-line parallel to the main diagonal (in the ENS representation) ending at the bottom-most endpoint of step $i$.
We are going to count the number of intersections of these half-lines with certain steps of the path: for the purpose of intersecting $\ell_i$, vertical steps have their top-most endpoint excluded and their bottom-most endpoint included; horizontal steps have their left-most endpoint excluded and their rightmost endpoint included.

Let us now state a few useful counting lemmas.

\begin{lemma}
    \label{lem:tilde_B_big}
    For $i \in \bigsteps$, we have $i \in \tilde B_\text{b}$ if and only if $\ell_i$ starts below the path, i.e.\ it intersects the $y$-axis in a negative $y$-coordinate.
\end{lemma}

\begin{proof}
    The equation of $\ell_i$ is $y = \frac{n}{m} x + v_i$, so it intersects the $y$-axis in $(0, v_i)$. Since $\tilde{B}_{\text{b}} = \left\{ i \in \bigsteps \mid v_i < 0 \right\}$, the claim follows.
    An example where $\tilde{B}_{\text{b}}$ is non-empty is shown in \Cref{fig:dinv-proof-example2}.
\end{proof}

\begin{lemma}
    \label{lem:F_nearrow}
    For $(i,j) \in \left( \verticalsteps \sqcup \bigsteps \right) \times \bigsteps$, we have $(i,j) \in F^\nearrow$ if and only if $i \in \pi_{w_j}$ and $\ell_j$ intersects $i$.
\end{lemma}

\begin{proof}
    By definition of $F^\nearrow$, the pair $(i, j)$ is in $F^\nearrow$ if and only if $i < j, \; w_i < w_j, \text{ and } i \rightarrow j$.
    By definition of $\pi_{w_j}$, we have $w_i < w_j$ if and only if $i \in \pi_{w_j}$. By \Cref{def:attack_relation}, we have $i < j$ and $i \rightarrow j$ if and only if $v_i \leq v_j < v_i + 1$, which is exactly the condition for $\ell_j$ to intersect $i$.
\end{proof}

\begin{figure}
    \centering
    \begin{tikzpicture}[scale=.6]
    
        \begin{scope}[shift={(0, 0)}]
            \begin{scope}[standardcolor, very thick]
                \draw[sharp <-, sharp angle = 45] (0.05, -1) -- (1, -1);
            \end{scope}
    
            \begin{scope}[bigcolor, very thick]
                \draw (-0.05, 1) -- (-0.05, 3) -- (0, 3);
                \draw (2, 2) -- (2, 3);
            \end{scope}

            \begin{scope}[gray]
                \clip (-1.3, 0) rectangle (2, 3);
                \draw (2, 2) -- (-8, -2);
            \end{scope}
            
            \begin{scope}[southcolor, very thick]
                \draw (0, 3) -- (0.05, 3) -- (0.05, 1);
            \end{scope}
    
            \begin{scope}[decoratedcolor, very thick]
                \draw[-sharp >, sharp angle = 45] (0.05, 1) -- (0.05, -1);
            \end{scope}
                
            \begin{scope}[decoratedcolor]
                \draw (0.25, 0.25) node {$\ast$};
                \draw (0.25, -0.75) node {$\ast$};
            \end{scope}
    
            \begin{scope}[every node/.style = {font=\scriptsize}]
                \draw[bigcolor]
                    (-0.255, 1.5) node {$\overline{1}$}
                    (-0.255, 2.5) node {$\overline{3}$}
                    (1.75, 2.5) node {$\overline{4}$}
                    ;
    
                \draw[decoratedcolor]
                    (0.25, 0.7) node {$\overline{2}$}
                    (0.25, -0.3) node {$\overline{5}$}
                    ;

                \draw[black]                
                    (2.25, 2.5) node {$i$}
                    (0.5, -1.3) node {$j$}
                    ;

                \draw[black] (-1, 1.2) node {$\ell_i$};
            \end{scope}

            \draw[
                  decorate,
                  decoration={brace, amplitude=4pt}
            ] (-1.75, 0) -- (-1.75, 3)
              node[midway, xshift=-0.65cm] {\scriptsize $w_i - 1$
              };
                
            \draw[
                  decorate,
                  decoration={brace, amplitude=3.5pt}
            ] (-1.75, -1) -- (-1.75, -0.05)
              node[midway, xshift=-0.4cm] {\scriptsize $r_{ij}'$};
                
        \end{scope}

        \begin{scope}[shift={(9, 0)}]
            \begin{scope}[standardcolor, very thick]
                \draw[sharp <-, sharp angle = 45] (0.05, -1) -- (1, -1);
            \end{scope}
    
            \begin{scope}[bigcolor, very thick]
                \draw (-0.05, 1) -- (-0.05, 2) -- (0, 2);
                \draw (2, 2) -- (2, 3);
            \end{scope}

            \begin{scope}[gray]
                \clip (-1.3, 0) rectangle (2, 3);
                \draw (2, 2) -- (-8, -2);
            \end{scope}
            
            \begin{scope}[southcolor, very thick]
                \draw (0, 2) -- (0.05, 2) -- (0.05, 1);
            \end{scope}
    
            \begin{scope}[decoratedcolor, very thick]
                \draw[-sharp >, sharp angle = 45] (0.05, 1) -- (0.05, -1);
            \end{scope}
                
            \begin{scope}[decoratedcolor]
                \draw (0.25, 0.25) node {$\ast$};
                \draw (0.25, -0.75) node {$\ast$};
            \end{scope}
    
            \begin{scope}[every node/.style = {font=\scriptsize}]
                \draw[bigcolor]
                    (-0.25, 1.5) node {$\overline{1}$}
                    (1.75, 2.5) node {$\overline{2}$}
                    ;
    
                \draw[decoratedcolor]
                    (0.25, 0.7) node {$\overline{2}$}
                    (0.25, -0.3) node {$\overline{5}$}
                    ;

                \draw[black]
                    (-0.25, 0.5) node {$i_*$}
                    (2.25, 2.5) node {$i$}
                    (0.5, -1.3) node {$j$}
                    ;

                \draw[black] (-1, 1.2) node {$\ell_i$};
            \end{scope}

            \draw[
                  decorate,
                  decoration={brace, amplitude=3.5pt}
            ] (-1.75, 1) -- (-1.75, 2)
              node[midway, xshift=-0.65cm] {\scriptsize $w_i - 1$
              };
                
            \draw[
                  decorate,
                  decoration={brace, amplitude=4pt}
            ] (-1.75, -1) -- (-1.75, 0.95)
              node[midway, xshift=-0.4cm] {\scriptsize $r_{ij}'$};
        \end{scope}

    \end{tikzpicture}
    \caption{
        Examples of pairs $(i, j) \in \bigsteps \times \horizontalsteps$ satisfying \eqref{eq:half-line-intersects-before-j} in the proof of \Cref{lem:complicated}.
        The two cases differ by the label $w_i$, which is $\overline{4}$ on the left and $\overline{2}$ on the right; consequently, the path $\pi_{w_i}$ on the right contains fewer steps.
        We have $r_{ij}' = 1$ on the left and $r_{ij}' = 2$ on the right.
    }
    \label{fig:lem-complicated}
\end{figure}

\begin{figure}
    \centering
    \begin{tikzpicture}[scale=.6]
    
        \begin{scope}[shift={(0, 0)}]
            \begin{scope}[standardcolor, very thick]
                \draw[sharp <-, sharp angle = 45] (0.05, -1) -- (1, -1);
            \end{scope}
    
            \begin{scope}[bigcolor, very thick]
                \draw (-0.05, 1) -- (-0.05, 5) -- (0, 5);
                \draw (2, 4) -- (2, 5);
            \end{scope}

            \begin{scope}[gray]
                \clip (-1.4, 0) rectangle (2, 5);
                \draw[dashed] (2, 5) -- (-8, 1);
            \end{scope}
            
            \begin{scope}[southcolor, very thick]
                \draw (0, 5) -- (0.05, 5) -- (0.05, 1);
            \end{scope}
    
            \begin{scope}[decoratedcolor, very thick]
                \draw[-sharp >, sharp angle = 45] (0.05, 1) -- (0.05, -1);
            \end{scope}
                
            \begin{scope}[decoratedcolor]
                \draw (0.25, 0.25) node {$\ast$};
                \draw (0.25, -0.75) node {$\ast$};
            \end{scope}
    
            \begin{scope}[every node/.style = {font=\scriptsize}]
                \draw[bigcolor]
                    (-0.255, 1.5) node {$\overline{1}$}
                    (-0.255, 2.5) node {$\overline{3}$}
                    (-0.255, 3.5) node {$\overline{4}$}
                    (-0.255, 4.5) node {$\overline{6}$}
                    
                    (1.75, 4.5) node {$\overline{4}$}
                    ;
    
                \draw[decoratedcolor]
                    (0.25, 0.7) node {$\overline{2}$}
                    (0.25, -0.3) node {$\overline{5}$}
                    ;

                \draw[black]                
                    (2.25, 4.5) node {$i$}
                    (0.5, -1.3) node {$j$}
                    ;

            \end{scope}

            \draw[
                  decorate,
                  decoration={brace, amplitude=4pt}
            ] (-1.75, 0) -- (-1.75, 2.95)
              node[midway, xshift=-0.65cm] {\scriptsize $w_i - 1$
              };
                
            \draw[
                  decorate,
                  decoration={brace, amplitude=3.5pt}
            ] (-1.75, -1) -- (-1.75, -0.05)
              node[midway, xshift=-0.4cm] {\scriptsize $r_{ij}'$};
                
            \draw[
                  decorate,
                  decoration={brace, amplitude=3.5pt}
            ] (-1.75, 3) -- (-1.75, 4)
              node[midway, xshift=-0.65cm] {\scriptsize $1 - r_{ij}''$};
                
            \draw[
                  decorate,
                  decoration={brace, amplitude=3.5pt}
            ] (-4, -1) -- (-4, 5)
              node[midway, xshift=-0.4cm] {\scriptsize $k$};
                
        \end{scope}

        \begin{scope}[shift={(9, 0)}]
            \begin{scope}[standardcolor, very thick]
                \draw[sharp <-, sharp angle = 45] (0.05, -1) -- (1, -1);
            \end{scope}
    
            \begin{scope}[bigcolor, very thick]
                \draw (-0.05, 1) -- (-0.05, 5) -- (0, 5);
                \draw (2, 3) -- (2, 4);
            \end{scope}

            \begin{scope}[gray]
                \clip (-1.4, 0) rectangle (2, 4);
                \draw[dashed] (2, 4) -- (-8, 0);
            \end{scope}
            
            \begin{scope}[southcolor, very thick]
                \draw (0, 5) -- (0.05, 5) -- (0.05, 1);
            \end{scope}
    
            \begin{scope}[decoratedcolor, very thick]
                \draw[-sharp >, sharp angle = 45] (0.05, 1) -- (0.05, -1);
            \end{scope}
                
            \begin{scope}[decoratedcolor]
                \draw (0.25, 0.25) node {$\ast$};
                \draw (0.25, -0.75) node {$\ast$};
            \end{scope}
    
            \begin{scope}[every node/.style = {font=\scriptsize}]
                \draw[bigcolor]
                    (-0.25, 1.5) node {$\overline{1}$}
                    (-0.25, 2.5) node {$\overline{3}$}
                    (-0.25, 3.5) node {$\overline{4}$}
                    (-0.25, 4.5) node {$\overline{6}$}
                    (1.75, 3.5) node {$\overline{2}$}
                    ;
    
                \draw[decoratedcolor]
                    (0.25, 0.7) node {$\overline{2}$}
                    (0.25, -0.3) node {$\overline{5}$}
                    ;

                \draw[black]
                    (-0.25, 0.5) node {$i_*$}
                    (2.25, 3.5) node {$i$}
                    (0.5, -1.3) node {$j$}
                    ;

            \end{scope}

            \draw[
                  decorate,
                  decoration={brace, amplitude=3.5pt}
            ] (-1.75, 1) -- (-1.75, 2)
              node[midway, xshift=-0.65cm] {\scriptsize $w_i - 1$
              };
                
            \draw[
                  decorate,
                  decoration={brace, amplitude=4pt}
            ] (-1.75, -1) -- (-1.75, 0.95)
              node[midway, xshift=-0.4cm] {\scriptsize $r_{ij}'$};

            \draw[
                  decorate,
                  decoration={brace, amplitude=3.5pt}
            ] (-4, -1) -- (-4, 5)
              node[midway, xshift=-0.4cm] {\scriptsize $k$};
        \end{scope}

    \end{tikzpicture}
    \caption{
        Examples of pairs $(i, j) \in \bigsteps \times \horizontalsteps$ satisfying \eqref{eq:step-i-attacks-big-steps-above-j} in the proof of \Cref{lem:complicated}.
        The dashed gray line indicates the attacking relation: step $i$ attacks a step $i'$ above the left-most endpoint of $j$.
        Note that $r_{ij}'' = 0$ on the left ($i_*$ is not defined) and $r_{ij}'' = 1$ on the right ($i_*$ is defined).
    }
    \label{fig:lem-complicated2}
\end{figure}

\begin{lemma}
    \label{lem:complicated}
    The expression $\#C^* + \# \tilde C_\text{b} - \# F_\text{b}^\swarrow$ counts the intersections of the half-lines $\ell_i$ with the steps in $\horizontalsteps \sqcup \decoratedsteps \sqcup \southsteps$ belonging to $\pi_{w_i}$, summed over all $i\in\bigsteps$.
\end{lemma}

\begin{proof} %

    The proof strategy is to interpret all numbers in the statement as counting pairs of steps $i \in \bigsteps$ and $j \in \horizontalsteps$, with $j < i$, satisfying certain conditions.
    For any such pair $(i, j)$, denote by $r_{ij}'$ the number of decorated steps $i' \in \decoratedsteps$ immediately preceding step $j$ with $\overline{w}(i') \geq w_i$.
    If the step with fall-label equal to $w_i$ is among these, denote it by $i_*$ (recall that there is exactly one such step in the entire path $\pi$).
    Note that $v_{i_*} = v_j + \frac{n}{m} + r_{ij}' - 1$ by definition of $i_*$.
    Let $r_{ij}'' = 1$ if $i_*$ is defined and $0$ otherwise.

    For a fixed step $i \in \bigsteps$, let us count the intersections of the half-line $\ell_i$ with the steps in $\horizontalsteps \sqcup \decoratedsteps \sqcup \southsteps$ belonging to $\pi_{w_i}$.
    This is equal to the number of steps $j \in \horizontalsteps$ with $j < i$ such that $\ell_i$ intersects step~$j$ or one of the steps in $\decoratedsteps \sqcup \southsteps$ immediately before step~$j$ belonging to $\pi_{w_i}$ (see \Cref{fig:lem-complicated}).
    For a given $j \in \horizontalsteps$, this happens if and only if
    \begin{equation}
        v_j \leq v_i < v_j + \textstyle\frac{n}{m} +  r_{ij}' + w_i - 1.
        \label{eq:half-line-intersects-before-j}
    \end{equation}
    Here we treat $w_i$ as a positive integer, even though technically $w_i \in \overline{\Z}_+$.

    Next, let us rewrite the term $\# F_\text{b}^\swarrow$.
    For $i \in \bigsteps$ and $j \in \horizontalsteps$ with $j < i$, there is (exactly) one pair $(i, i') \in F_\text{b}^\swarrow$, with $i'$ appearing above the left-most endpoint of $j$, if and only if
    \begin{equation}
        v_j + \textstyle\frac{n}{m} + r_{ij}' + w_i - 1 - r_{ij}'' \leq v_i < v_j + \textstyle\frac{n}{m} + k - 1;
        \label{eq:step-i-attacks-big-steps-above-j}
    \end{equation}
    this is exemplified in \Cref{fig:lem-complicated2}.

    \begin{figure}
        \centering
        \begin{tikzpicture}[scale=.6]
            \begin{scope}[shift={(0, 0)}]
                \begin{scope}[standardcolor, very thick]
                    \draw[sharp <-, sharp angle = 45] (0.05, -1) -- (1, -1);
                \end{scope}
        
                \begin{scope}[bigcolor, very thick]
                    \draw (-0.05, 1) -- (-0.05, 5) -- (0, 5);
                    \draw (3, 3) -- (3, 4);
                \end{scope}
    
                \begin{scope}[gray]
                    \clip (-1.4, 0) rectangle (3, 4);
                    \draw[dashed] (3, 2) -- (-7, -2);
                \end{scope}
                
                \begin{scope}[southcolor, very thick]
                    \draw (0, 5) -- (0.05, 5) -- (0.05, 1);
                \end{scope}
        
                \begin{scope}[decoratedcolor, very thick]
                    \draw[-sharp >, sharp angle = 45] (0.05, 1) -- (0.05, -1);
                    \draw (3, 2) -- (3, 1);
                \end{scope}
                    
                \begin{scope}[decoratedcolor]
                    \draw (0.25, 0.25) node {$\ast$};
                    \draw (0.25, -0.75) node {$\ast$};

                    \draw (3.25, 1.25) node {$\ast$};
                \end{scope}
        
                \begin{scope}[every node/.style = {font=\scriptsize}]
                    \draw[bigcolor]
                        (-0.25, 1.5) node {$\overline{1}$}
                        (-0.25, 2.5) node {$\overline{3}$}
                        (-0.25, 3.5) node {$\overline{4}$}
                        (-0.25, 4.5) node {$\overline{6}$}
                        (2.75, 3.5) node {$\overline{2}$}
                        ;
        
                    \draw[decoratedcolor]
                        (0.25, 0.7) node {$\overline{2}$}
                        (0.25, -0.3) node {$\overline{5}$}

                        (3.25, 1.7) node {$\overline{3}$}
                        ;
    
                    \draw[black]
                        (-0.25, 0.5) node {$i_*$}
                        (3.25, 3.5) node {$i$}
                        (0.5, -1.3) node {$j$}
                        (3.8, 1.5) node {$j'$}
                        ;
    
                \end{scope}
    
                \draw[
                      decorate,
                      decoration={brace, amplitude=3.5pt, mirror}
                ] (4.5, 2) -- (4.5, 3)
                  node[midway, xshift=0.65cm] {\scriptsize $w_i - 1$
                  };

            \end{scope}
    
        \end{tikzpicture}
        \caption{
            Example of a pair $(i, j) \in \bigsteps \times \horizontalsteps$ satisfying \eqref{eq:istar-jprime-in-c-star} in the proof of \Cref{lem:complicated}.
            The dashed gray line indicates that, in order to have $(i_*, j') \in C^*$, the top-most endpoint of $j'$ must diagonally project onto $i_*$.
        }
        \label{fig:lem-complicated3}
    \end{figure}

    Fix a pair $(i, j)$ such that $r_{ij}'' = 1$ (so $i_*$ is defined).
    Look at contributions to $C^*$ from pairs $(i_*, j')$ where $j'>i$ has at least one endpoint below~$i$.
    If $(i_*, j') \in C^*$, the decorated steps immediately before $j'$ have a label $< w_i$, whereas all decorated steps immediately after $j'$ (including $j'$) have a label $> w_i$, by construction of $\overline w$.
    Therefore, step~$j'$ is uniquely determined by $v_i = v_{j'}^+ + w_i - 1$, and by definition of $C^*$, we have $(i_*, j') \in C^*$ if and only if $v_{i_*} \leq v_{j'}^+ < v_{i_*} + 1$.
    This condition can be rewritten as
    \begin{equation}
        v_j + \textstyle\frac{n}{m} + r_{ij}' + w_i - 1 - r_{ij}'' \leq v_i < v_j + \textstyle\frac{n}{m} + r_{ij}' + w_i - 1
        \label{eq:istar-jprime-in-c-star}
    \end{equation}
    (note that this condition is vacuous if $r_{ij}'' = 0$).
    See \Cref{fig:lem-complicated3}.

    The term $\# \tilde C_\text{b}$ is already defined in terms of pairs $(i, j) \in \bigsteps \times \horizontalsteps$ with $j < i$; such a pair contributes to $\# \tilde C_\text{b}$ if and only if
    \begin{equation}
        v_j \leq v_i < v_j + \textstyle\frac{n}{m} + k - 1.
        \label{eq:tilde-c-b}
    \end{equation}

    We conclude by observing that a pair $(i, j)$ satisfies \eqref{eq:half-line-intersects-before-j} or \eqref{eq:step-i-attacks-big-steps-above-j} if and only if \eqref{eq:tilde-c-b} holds; it satisfies both \eqref{eq:half-line-intersects-before-j} and \eqref{eq:step-i-attacks-big-steps-above-j} if and only if \eqref{eq:istar-jprime-in-c-star} holds.
\end{proof}

\begin{lemma}
    \label{lem:C_star_plus_C_small_equals_C_minus_plus_F_small}
    We have $\# C_+ + \# \tilde{C}_{\text{s}} = \# C_- + \# F_{\text{s}}^\swarrow$.
\end{lemma}

\begin{proof}
    In $F_{\text{s}}^\swarrow$, we can group the steps with a big label by the horizontal step following them, obtaining the set
    \[ F_{\text{s}}' = \left\{
        (i, j) \in \verticalsteps \times \horizontalsteps \mid j < i \text{ and } v_j + \textstyle\frac{n}{m} + r_j - 1 \leq v_i < v_j + \textstyle\frac{n}{m} + k - 1
    \right\}, \]
    and clearly $\# F_\text{s}' = \# F_\text{s}^\swarrow$.
    Now, $C_+ \sqcup \tilde{C}_{\text{s}} = C_- \sqcup F_{\text{s}}'$ (the sets are disjoint) as they are both equal to
    \[  \left\{ (i, j) \in \verticalsteps \times \horizontalsteps \mid j < i \text{ and } v_j + \min\left(0, \, \textstyle\frac{n}{m} + r_j - 1 \right) \leq v_i < v_j + \textstyle\frac{n}{m} + k - 1 \right\}. \]
    The result follows.
\end{proof}

We can now proceed with the proof of \Cref{thm:bijection-dinv}.

\begin{proof}[Proof of \Cref{thm:bijection-dinv}]
    By definition, we have $\dinv(\pi) = \tdinv(\pi) + \cdinv(\pi)$ and $\dinv(\tilde\pi) = \tdinv(\tilde\pi) + \cdinv(\tilde\pi)$.
    By \Cref{rmk:falldinv}, we have $\tdinv(\tilde\pi) = \tdinv(\pi) + \falldinv(\pi, \overline{w})$, so it remains to prove the following:
    \[ \cdinv(\pi) =  \cdinv(\tilde\pi) + \falldinv(\pi, \overline{w}). \]
    Using the previously shown partitionings, we can rewrite the statement as
    \begin{equation}
        \# C_+ - \# C_- + \# C^* + \# B = - \# \tilde{C}_\text{s} - \# \tilde{C}_\text{b} + \# \tilde B_\text{s} + \# \tilde B_\text{b} + \# F_{\text{s}}^\swarrow + \# F_{\text{b}}^\swarrow + \# F^\nearrow. \label{eq:main}
    \end{equation}
    By construction, we have $B = \tilde{B}_{\text{s}}$. By \Cref{lem:C_star_plus_C_small_equals_C_minus_plus_F_small}, we have $\# C_+ - \# C_- = \# F_{\text{s}}^\swarrow - \# \tilde{C}_{\text{s}}$. Therefore, we can cancel out the corresponding summands from \eqref{eq:main} and reduce the statement to
    \[
        (\# C^\ast + \# \tilde{C}_{\text{b}} - \# F_{\text{b}}^\swarrow) - \# F^\nearrow = \# \tilde B_\text{b}.
    \]
    By \Cref{lem:F_nearrow}, the term $\# F^\nearrow$ counts the intersections of the half-lines $\ell_i$ with the steps in $\verticalsteps \sqcup \bigsteps$ belonging to~$\pi_{w_i}$; by \Cref{lem:complicated}, the term $\#C^* + \# \tilde C_\text{b} - \# F_\text{b}^\swarrow$ counts the intersections of the half-lines $\ell_i$ with the steps in $\horizontalsteps \sqcup \decoratedsteps \sqcup \southsteps$ belonging to $\pi_{w_i}$.

    Now, fix $i \in \bigsteps$.
    If we traverse $\ell_i$ in the top-right direction, every time $\ell_i$ intersects a step in $\horizontalsteps \sqcup \decoratedsteps \sqcup \southsteps$ it goes from weakly below the path $\pi_{w_i}$ to strictly above the path, and every time it intersects a step in $\verticalsteps \sqcup \bigsteps$ it does the opposite.
    Since $\ell_i$ ends weakly above the path, these numbers are the same if $\ell_i$ starts weakly above the path, and they are off by one if $\ell_i$ starts strictly below the path, in which case $i \in \tilde B_\text{b}$.
    Summing the contributions over all steps $i \in \bigsteps$, the result follows.
\end{proof}

\section{Proof of \texorpdfstring{\Cref{thm:main}}{the main theorem}}
\label{sec:proof-main}

It is now time to put together the pieces and conclude the proof of \Cref{thm:main}. First, we recall the statement of the \emph{rational shuffle theorem}.

\begin{theorem}[Rational shuffle theorem {\cite{Mellit2021Rational}}]
    \label{thm:rational}
    For $m, n \in \mathbb{N}$, we have 
    \[ e_{m,n} = \sum_{\pi \in \LRD(m,n)} q^{\dinv(\pi)} t^{\area(\pi)} x^\pi. \] 
\end{theorem}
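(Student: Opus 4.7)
The plan is to prove the rational shuffle theorem by following the strategy of Mellit's original proof, which matches recursions on both sides of the identity using the elliptic Hall algebra on the symmetric function side and path decompositions on the combinatorial side. The key preliminary move is to pass to a compositional refinement.

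First, I would refine both sides according to the touch points of the path with the main diagonal. Any $\pi \in \LRD(m,n)$ decomposes along its returns to $my=nx$ into primitive pieces whose sizes form a composition $\lambda$ of $d = \gcd(m,n)$. Writing $(a,b) = (m/d, n/d)$, this mirrors the plethystic definition
\[ e_{m,n} = F_{a,b}(e_d) = \sum_{\lambda \vdash d} c_\lambda(q,t) \prod_i Q_{\lambda_i a,\, \lambda_i b}(1). \]
The refined identity asserts that for each composition of $d$, the corresponding product of primitive $Q$-operators applied to $1$ equals the generating function of rectangular Dyck paths with that touch pattern. This splits the theorem into an identity indexed by primitive pairs $(a,b)$, to be proved by induction.

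Second, I would exploit the elliptic Hall relation
\[ Q_{m,n} = \tfrac{1}{M}\bigl(Q_{c,d}\,Q_{a,b} - Q_{a,b}\,Q_{c,d}\bigr) \]
(with Bézout-compatible $(a,b,c,d)$) to induct on $(m,n)$, the base cases $Q_{1,0} = \mathsf{id} - M\Delta_{e_1}$ and $Q_{0,1} = -\underline{e_1}$ being immediately matched by one-step path families. Combinatorially, the recursion should correspond to decomposing each rectangular Dyck path according to its behavior near an intermediate lattice line of slope between $b/a$ and $d/c$. The scalar factor $1/M = 1/((1-q)(1-t))$ arises as a geometric series summed over the vertical heights at which such a decomposition can occur, thereby reproducing the $q$- and $t$-factors from the algebraic side.

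The main obstacle is matching the commutator $Q_{c,d}Q_{a,b} - Q_{a,b}Q_{c,d}$ with the combinatorics: this requires an explicit sign-reversing involution on an augmented set of paths carrying auxiliary data for the intermediate line, whose fixed points reproduce the $\LRD(m,n)$ generating function with the correct $\dinv$ and $\area$. Mellit accomplishes this via a categorified \emph{double Dyck path algebra}, where symmetric function operators lift to morphisms between modules of path-generating series and a careful bijective argument tracks how the statistics transform under each operator. An alternative route would be to use the BHMPS shuffle theorem for any line \cite{BlasiakHaimanMorsePunSeelinger2023ShuffleAnyLine}, which provides a more direct combinatorial handle on $e_{m,n}$ via their $D_\alpha$ operators and bypasses some of the technical complications of the elliptic Hall framework, at the cost of invoking their full machinery.
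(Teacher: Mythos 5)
This statement is not proved in the paper at all: it is imported verbatim as Mellit's rational shuffle theorem \cite{Mellit2021Rational} (cf.\ \cite{BlasiakHaimanMorsePunSeelinger2023ShuffleAnyLine} for the non-coprime case), and the paper's only ``proof'' is the citation. So the honest comparison is between your outline and the external literature, not an argument inside this paper.

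As a standalone proof, your proposal has a genuine gap exactly where the theorem is hard. The first two steps (touch-point decomposition matching $F_{a,b}(e_d) = \sum_\lambda c_\lambda \prod_i Q_{\lambda_i a, \lambda_i b}(1)$, and induction via $Q_{m,n} = \tfrac{1}{M}(Q_{c,d}Q_{a,b} - Q_{a,b}Q_{c,d})$) are a correct restatement of the setup, but the third step is where you write ``the recursion \emph{should} correspond to decomposing each path near an intermediate lattice line'' and that the factor $1/M$ ``arises as a geometric series.'' No such direct sign-reversing involution matching the commutator with $\dinv$ and $\area$ is known; this is precisely the obstruction that forced Mellit to work instead in the double Dyck path algebra $\mathbb{A}_{q,t}$, where the combinatorial side is encoded by a characteristic function built from raising and lowering operators and the identity is verified at the level of operator relations, not by a path-by-path bijection. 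Your proposal acknowledges this and then delegates the entire content to ``Mellit accomplishes this via\dots,'' which reduces it to the same citation the paper already makes. If your goal was to reprove the theorem, the central combinatorial or algebraic matching still has to be supplied; if your goal was to identify the right references and proof architecture, you have done so, and that is all the paper does too.
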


Applying the skewing operator $s_{(m-1)^k}^\perp$ to the combinatorial interpretation of $e_{m,n+km}$ given by \Cref{thm:rational}, and using \Cref{prop:schur-expansion}, we obtain
\[ s_{(m-1)^k}^\perp e_{m,n+km} = \sum_{\alpha \text{ allowable}} (-1)^{\sgn(\alpha)} \sum_{\tilde\pi \in \LRD(m,n+km)^{\tilde\alpha}} q^{\dinv(\tilde\pi)} t^{\area(\tilde\pi)} x^{\tilde\pi}. \]
Combining this with \Cref{prop:shape-bijection}, we get
\[ s_{(m-1)^k}^\perp e_{m,n+km} = \sum_{\pi \in \LRD(m+k,n+k)_{\ast k}} \sum_{\overline{w} \in \FL(\pi)} (-1)^{\sgn(\overline{w})} q^{\dinv(\psi^{-1}(\pi, \overline{w}))} t^{\area(\psi^{-1}(\pi, \overline{w}))} x^\pi. \]
By \Cref{prop:dinv-involution} and \Cref{thm:bijection-dinv}, this equality becomes
\[ s_{(m-1)^k}^\perp e_{m,n+km} = \sum_{\pi \in \LRD(m+k,n+k)_{\ast k}} q^{\dinv(\pi)} t^{\area(\pi)} x^\pi, \]
which is exactly \Cref{thm:main}, as desired.

Let us recall the statement of the \emph{rectangular shuffle conjecture}.

\begin{conjecture}[{\cite{IraciPagariaPaoliniVandenWyngaerd2023}*{Conjecture~4.2}}]
    For $m, n \in \mathbb{N}$ and $d = \gcd(m,n)$, we have \[ \frac{[m]_q}{[d]_q} p_{m,n} = \sum_{\pi \in \LRP(m,n)} q^{\dinv(\pi)} t^{\area(\pi)} x^\pi. \]
\end{conjecture}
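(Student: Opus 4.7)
The conjecture asserts that $\frac{[m]_q}{[d]_q} p_{m,n}$ equals the generating function of rectangular paths in $\LRP(m,n)$, where $d = \gcd(m,n)$. The overall plan is to build on Mellit's rational shuffle theorem (\Cref{thm:rational}) and treat separately the coprime and non-coprime cases, as the factor $\frac{[m]_q}{[d]_q}$ and the identity $p_d = e_d$ at $d=1$ already hint at a two-stage argument.

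For the coprime case $d=1$, since $p_1 = e_1$ we have $p_{m,n} = e_{m,n}$, and the conjecture reduces to
\[ [m]_q \sum_{\pi \in \LRD(m,n)} q^{\dinv(\pi)} t^{\area(\pi)} x^\pi = \sum_{\pi \in \LRP(m,n)} q^{\dinv(\pi)} t^{\area(\pi)} x^\pi. \]
I would establish this via a cycle-lemma argument for coprime rectangular paths: the group of cyclic rotations of the N/E step sequence acts on unlabeled paths, and for coprime $(m,n)$ each orbit of a rectangular Dyck path $D$ contains exactly $m$ rotations ending in an east step, which are precisely the $m$ elements of $\LRP(m,n)$ collapsing to the orbit of $D$. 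The labels attached to the vertical steps travel along with them under rotation. The technical heart is to verify that $\area$ is invariant on each orbit (this follows from the area being defined via the shift to the base diagonal, which transforms covariantly), and that the multi-set of $\dinv$ values in an orbit is exactly $\{\dinv(D), \dinv(D)+1,\ldots,\dinv(D)+m-1\}$, thus producing the factor $[m]_q$. I have verified this pattern in small cases (e.g.\ $(m,n)=(3,1)$, where the three rotations yield $\dinv$ values $0,1,2$), with the $\dinv$ increments coming precisely from the $C_+$ contributions accumulating as east steps fall below the diagonal.

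For $d > 1$, use the plethystic expansion
\[ p_{m,n} = F_{a,b}(p_d) = \sum_{\lambda \vdash d} c_\lambda(q,t) \prod_{i=1}^{\ell(\lambda)} Q_{\lambda_i a, \lambda_i b}(1), \]
where each factor $Q_{\lambda_i a, \lambda_i b}(1)$ is the coprime $e$-function $e_{\lambda_i a, \lambda_i b}$ to which the coprime case (combined with Mellit's theorem) already applies. The plan is to construct a bijection between signed tuples of rectangular paths on the sub-rectangles $\lambda_i a \times \lambda_i b$ and elements of $\LRP(m,n)$, by gluing the sub-paths along a chain of intermediate parallel diagonals that split the $m \times n$ rectangle into blocks of shape $\lambda_i a \times \lambda_i b$, and to use a sign-reversing involution to cancel the spurious off-diagonal contributions coming from the $c_\lambda(q,t)$ expansion. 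This mirrors Sergel's derivation of the classical square paths theorem from the shuffle theorem, but lifted to a rectangular multi-component setting.

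The main obstacle is carrying out the non-coprime case. The coefficients $c_\lambda(q,t)$, arising from the expansion of $p_d$ in the basis $\{h_\lambda[\tfrac{1-qt}{qt}X]\}$, are intricate rational functions whose combinatorial meaning is not transparent, and matching them to a canonical gluing of sub-paths will likely require a careful sign-reversing involution whose fixed points biject with $\LRP(m,n)$. A further technical difficulty is that the base diagonal of a rectangular path sits at a fractional height (denominator dividing $m$), so cutting along intermediate parallel diagonals does not immediately yield integer-shape sub-paths; one would need either to rescale via an ENS-style representation, or to lift the argument to the shuffle algebra. I expect a full proof to require a compositional refinement in the spirit of Haglund-Morse-Zabrocki and Carlsson-Mellit, combined with the $D_\alpha$-operator machinery of Blasiak-Haiman-Morse-Pun-Seelinger. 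Given the depth of techniques used already in Mellit's proof of the $e_{m,n}$ case, the non-coprime regime is likely to demand a genuinely new idea, which is why the conjecture remains open.
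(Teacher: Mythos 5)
You should first be aware that this statement is not proved in the paper at all: it is quoted verbatim as an open conjecture (\cite{IraciPagariaPaoliniVandenWyngaerd2023}*{Conjecture~4.2}) and is used only as a \emph{hypothesis} for \Cref{thm:rectangular}. The only piece of it that is known is the coprime case $d=1$, which the paper attributes to \cite{IraciPagariaPaoliniVandenWyngaerd2023}*{Theorem~6.1}. So there is no proof in the paper to compare yours against, and your own text concedes the same point: your treatment of $d>1$ is a programme, not an argument, and you end by explaining why "the conjecture remains open." A proposal that ends this way is not a proof, so the verdict has to be that there is a genuine gap — indeed, the entire non-coprime case is missing, and no amount of polishing the coprime half closes it.

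Two more specific comments. For $d=1$, your reduction $p_{m,n}=e_{m,n}$ and the target identity $[m]_q\sum_{\pi\in\LRD(m,n)}q^{\dinv}t^{\area}x^\pi=\sum_{\pi\in\LRP(m,n)}q^{\dinv}t^{\area}x^\pi$ are correct, and the cycle-lemma setup is reasonable (for coprime $(m,n)$ the $m+n$ rotations are distinct, exactly one is a Dyck path, exactly $m$ end in an East step, and since the cut in those rotations always sits next to an East step the labeling constraints do transport). But the entire content of that case is your unproved assertion that the $\dinv$ values over an orbit form the consecutive run $\{\dinv(D),\dots,\dinv(D)+m-1\}$; this is precisely where $\tdinv$ and the correction terms $C_+$, $C_-$, $B$ of \Cref{def:cdinv_alt} interact nontrivially, and a check at $(m,n)=(3,1)$ does not substitute for a proof (note that Sergel's proof of the square-paths theorem, which you invoke as a model, does \emph{not} proceed by such an orbit count on $\dinv$). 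For $d>1$, the expansion $p_{m,n}=\sum_\lambda c_\lambda(q,t)\prod_i Q_{\lambda_i a,\lambda_i b}(1)$ is the definition of $F_{a,b}(p_d)$, but the individual operators $Q_{\lambda_i a,\lambda_i b}(1)$ for $\lambda_i>1$ are \emph{not} covered by the coprime case ($\gcd(\lambda_i a,\lambda_i b)=\lambda_i>1$), so your claim that "the coprime case already applies" to each factor is false as stated; the proposed gluing/sign-reversing involution is entirely hypothetical. In short: your honest diagnosis of the difficulty is consistent with the paper's own treatment of this statement as open, but what you have written establishes neither the coprime nor the non-coprime case.
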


Using the fact that $\gcd(m,n+km) = \gcd(m,n)$, the previous argument also proves \Cref{thm:rectangular}. In particular, due to \cite{IraciPagariaPaoliniVandenWyngaerd2023}*{Theorem~6.1}, the equality of \Cref{thm:rectangular} holds when $d=1$.

\section{Links with the existing literature}
\label{sec:links}

The objects and the symmetric functions introduced in this paper reduce to several other families of objects and symmetric functions in the literature, for suitable choices of $m$, $n$, and $k$.
In this section, we give a non-exhaustive overview of results that can be deduced from the previous sections, together with several new interesting conjectures.

\subsection{\texorpdfstring{$D_\alpha$}{D alpha} operators and \texorpdfstring{$e$}{e}-positivity}

Let $\alpha \vDash n$ be defined by $\alpha_i \coloneqq \left\lceil \frac{ni}{m} \right\rceil - \left\lceil \frac{n(i-1)}{m} \right\rceil$, and let $x^\alpha \coloneqq \prod_{i} x_i^{\alpha_i}$. Recall that, in the notation of \cite{BlasiakHaimanMorsePunSeelinger2023ShuffleAnyLine} (cf.\ \cites{BlasiakHaimanMorsePunSeelinger2023ProofExtendedDelta,GillespieGorskyGriffin2025}), we have the identity \[ e_{m,n} = D_\alpha(1) = \omega \left( H_{q,t}^m \left( \frac{x^\alpha}{\prod_i (1-qtx_i/x_{i+1})} \right)_{\textnormal{pol}} \right). \]

Then, by \cite{GillespieGorskyGriffin2025}*{Lemma~3.12}, we can deduce the following identity. 

\begin{proposition}
    For $m,n,k \in \mathbb{N}$ with $m > 0$, and $\alpha$ as above, we have
    \[ s_{(m-1)^k}^{\perp} e_{m,n+km} = \omega \left(H_{q,t}^m \left( \frac{x^\alpha h_k(x_1, \dots, x_m)}{\prod_i (1-qtx_i/x_{i+1})} \right)_{\textnormal{pol}} \right). \]
\end{proposition}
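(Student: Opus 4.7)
The plan is to derive the identity by expressing $e_{m,n+km}$ directly through the given $D_\alpha$-operator formula (applied to the pair $(m, n+km)$ itself, rather than to $(m,n)$) and then invoking \cite{GillespieGorskyGriffin2025}*{Lemma~3.12} to commute the $s_{(m-1)^k}^\perp$ skewing with the $\omega H_{q,t}^m(-)_{\mathrm{pol}}$ construction.

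First, I would unpack the weak composition $\alpha'\vDash n+km$ associated to the pair $(m,n+km)$, namely $\alpha'_i=\lceil (n+km)i/m\rceil-\lceil (n+km)(i-1)/m\rceil$. A one-line computation gives $\alpha'_i=\alpha_i+k$, since the extra $km/m=k$ in the slope is an integer and therefore contributes exactly $k$ to each successive difference of ceilings. Consequently $x^{\alpha'}=x^\alpha(x_1 x_2\cdots x_m)^k$, so the given identity reads
$$e_{m,n+km}=\omega\!\left(H_{q,t}^m\!\left(\frac{x^\alpha (x_1\cdots x_m)^k}{\prod_i (1-qt\,x_i/x_{i+1})}\right)_{\!\!\mathrm{pol}}\right).$$

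The only substantive step is then to apply $s_{(m-1)^k}^\perp$ to both sides. This is precisely the setup of \cite{GillespieGorskyGriffin2025}*{Lemma~3.12}: its role is to show that $s_{(m-1)^k}^\perp$ commutes through $\omega H_{q,t}^m(-)_{\mathrm{pol}}$, with its effect on the integrand being to replace the factor $(x_1\cdots x_m)^k=e_m(x_1,\dots,x_m)^k$ by $h_k(x_1,\dots,x_m)$, leaving the rational factor $\prod_i(1-qt\,x_i/x_{i+1})^{-1}$ unchanged. Substituting this produces the desired formula.

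I do not expect any genuine obstacle: the composition-shift identity $\alpha'=\alpha+(k,\dots,k)$ is a trivial check, and the conceptual content is entirely packaged inside the cited GGG lemma. The only care required is to verify that our integrand matches the hypotheses of \cite{GillespieGorskyGriffin2025}*{Lemma~3.12} — a monomial in $x_1,\dots,x_m$ divided by $\prod_i(1-qt\,x_i/x_{i+1})$ — which is immediate from the form above.
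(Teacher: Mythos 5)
Your proposal is correct and is essentially the paper's own argument, which the paper compresses into a single citation: one writes $e_{m,n+km}=D_{\alpha'}(1)$ with $\alpha'_i=\alpha_i+k$ (so $x^{\alpha'}=x^\alpha(x_1\cdots x_m)^k$) and then applies \cite{GillespieGorskyGriffin2025}*{Lemma~3.12} to turn the skewing by $s_{(m-1)^k}$ into the insertion of $h_k(x_1,\dots,x_m)$ in the integrand. No gaps.
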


Since the operator $ H_{q,t}^m \left( \cdot \right)_{\textnormal{pol}}$ is linear, we can split the sum into single summands. The monomials in $h_k(x_1 \cdots x_m)$ are naturally indexed by weak compositions $\beta \vDash k$ of length $\ell(\beta) \leq m$, where $\beta_i$ is the exponent of $x_i$.

\begin{definition}
    Let $(\pi, \decoratedsteps, w) \in \LRP(m+k,n+k)_{\ast k}$. We define its \emph{fall-composition} $\beta(\pi) \vDash k$ as $\beta(\pi) \coloneqq (r_i)_{i \in \horizontalsteps}$, that is, $\beta_i$ is the number of decorated falls immediately preceding the $i\th$ nondecorated horizontal step of $\pi$.
\end{definition}

We can naturally split the generating function of $\LRD(m+k,n+k)_{\ast k}$ according to $\beta(\pi)$. Ideally, we would like to match each summand to a monomial in $h_k(x_1, \dots, x_m)$. This is not possible in general because for a generic $\beta$ the symmetric function $D_{\alpha+\beta}(1)$ is not monomial-positive. However, if we set $q=1$, we get the following.

\begin{theorem}
    \label{thm:q=1}
    For $m,n,k \in \mathbb{N}$ with $m > 0$, $\alpha$ as above, and $\beta \vDash k$ of length $\ell(\beta) \leq m$, we have
    \[
        \left. D_{\alpha+\beta}(1) \right\rvert_{q=1} = \sum_{\substack{\pi \in \LRD(m+k,n+k)_{\ast k} \\ \beta(\pi) = \beta}} t^{\area(\pi)} x^\pi.
    \]
\end{theorem}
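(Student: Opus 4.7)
First, I would combine the preceding proposition with the main theorem. By linearity of $\omega \circ H_{q,t}^m(\cdot)_{\textnormal{pol}}$, expanding $h_k(x_1, \ldots, x_m) = \sum_{\beta} x^\beta$ in the proposition yields
\[
s_{(m-1)^k}^\perp e_{m,n+km} \;=\; \sum_{\beta \vDash k,\, \ell(\beta) \leq m} D_{\alpha+\beta}(1).
\]
By \Cref{thm:main}, the left-hand side also equals $\sum_{\pi \in \LRD(m+k,n+k)_{\ast k}} q^{\dinv(\pi)} t^{\area(\pi)} x^\pi$, which splits naturally by fall-composition as $\sum_{\beta} F_\beta(q,t;x)$, where $F_\beta(q,t;x) \coloneqq \sum_{\pi:\beta(\pi)=\beta} q^{\dinv(\pi)} t^{\area(\pi)} x^\pi$. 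Hence
\[
\sum_{\beta} D_{\alpha+\beta}(1) \;=\; \sum_{\beta} F_\beta(q,t;x),
\]
and the theorem reduces to the term-by-term identity $D_{\alpha+\beta}(1)|_{q=1} = F_\beta(1,t;x)$ at $q=1$.

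Second, I would establish this refined identity via the Blasiak-Haiman-Morse-Pun-Seelinger combinatorial formula for $D_\gamma(1)$, specialized at $q=1$. The operator $H_{q,t}^m(\cdot)_{\textnormal{pol}}$ is built from Demazure-Lusztig operators whose $q=1$ specialization simplifies drastically, eliminating the cross-term cancellations responsible for the non-monomial-positivity of $D_{\alpha+\beta}(1)$ at generic $q$. In the resulting formula, the $x^{\alpha+\beta}$ factor records column positions: $\alpha_i$ contributes the baseline $(n/m)$-staircase structure, while $\beta_i$ records the number of decorated-fall positions in column $i$, matching the fall-composition $\beta(\pi)$ on the combinatorial side. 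Translating through the ENS representation (where decorated falls become South steps preceding the corresponding East step), the $q=1$ BHMPS formula for $D_{\alpha+\beta}(1)|_{q=1}$ should be precisely $\sum_{\pi:\beta(\pi)=\beta} t^{\area(\pi)} x^\pi = F_\beta(1,t;x)$.

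The main obstacle is rigorously justifying this $q=1$ collapse. At generic $q$, splitting $\sum_{\beta} D_{\alpha+\beta}(1)$ by the monomials $x^\beta$ in $h_k$ does not correspond to splitting the combinatorial side by $\beta(\pi)$: the monomials inside each $D_{\alpha+\beta}(1)$ are entangled with other column positions via the Hecke action. Proving that this entanglement disappears at $q=1$—so that each $D_{\alpha+\beta}(1)|_{q=1}$ individually matches $F_\beta(1,t;x)$—is the technical heart of the argument. If a direct appeal to BHMPS proves cumbersome, an alternative route would be an induction on $k$ exploiting a recursion for $D_\gamma(1)|_{q=1}$ (obtained, for example, by removing the leftmost column of the associated combinatorial objects) matched against a parallel recursion on $F_\beta(1,t;x)$ obtained by trimming the leftmost column of a fall-decorated Dyck path.
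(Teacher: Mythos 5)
Your overall strategy coincides with the paper's: specialize at $q=1$ and match a combinatorial description of $D_{\alpha+\beta}(1)$ against the fall-decorated paths. However, as written the proposal has a gap precisely at the step you yourself flag as the ``technical heart.'' The identity $D_{\alpha+\beta}(1)|_{q=1} = F_\beta(1,t;x)$ is asserted (``should be precisely'') but never established: nothing in your argument pins down how the $q=1$ specialization of the Hecke-operator formula yields a monomial-positive expression indexed by lattice paths, and the fallback induction on $k$ is not developed either. The first reduction step, while correct, carries no weight: knowing $\sum_\beta D_{\alpha+\beta}(1) = \sum_\beta F_\beta(q,t;x)$ says nothing about individual summands, so the entire burden rests on the unproved refined identity.

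What closes the gap in the paper is that the $q=1$ collapse need not be re-derived: it is \cite{BlasiakHaimanMorsePunSeelinger2023ShuffleAnyLine}*{Equation~(148)}, which states $D_\gamma(1)|_{q=1} = \sum_\lambda t^{a(\lambda)} e_{b(\lambda)}$, summed over lattice paths $\lambda$ weakly above the path having $\gamma_i$ vertical steps on the line $x=i-1$, where $a(\lambda)$ is the enclosed area and $b(\lambda)_i$ counts the vertical steps of $\lambda$ in column $i$. Given this, the proof is a short bijection: contracting the columns of $\pi \in \LRD(m+k,n+k)_{\ast k}$ containing decorated falls produces a path above the shape prescribed by $\alpha+\beta$; the area is preserved, the factor $e_{b(\lambda)}$ accounts exactly for the column-increasing labelings, and the correspondence is bijective once the positions of the decorated falls (i.e., the condition $\beta(\pi)=\beta$) are fixed. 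To make your argument complete, you should either cite that formula or prove the $e$-positive expansion of $D_\gamma(1)|_{q=1}$ yourself, and then write out the column-contraction bijection explicitly instead of appealing to the ENS representation in general terms.
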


\Cref{thm:q=1} is essentially a reformulation of \cite{BlasiakHaimanMorsePunSeelinger2023ShuffleAnyLine}*{Equation~(148)}, which states \[ D_\gamma(1) = \sum_{\lambda} t^{a(\lambda)} e_{b(\lambda)}, \] where $\lambda$ is any path above the one prescribed by $\gamma$ (that is, the one with $\gamma_i$ vertical steps on the vertical line $x = i - 1$), $a(\lambda)$ is the area enclosed between the two paths, and $b(\lambda)$ is the composition such that $b(\lambda)_i$ is the number of vertical steps of $\lambda$ on the vertical line $x = i - 1$. In order to derive \Cref{thm:q=1}, it is sufficient to contract the columns containing decorated falls. The area stays the same, and the obtained path is one of the paths above the shape prescribed by $\gamma$. This correspondence is trivially bijective once one prescribes the positions of the decorated falls.

However, this is interesting because this formula refines the fall Delta theorem and our rational extension in a way that was not, to the authors' awareness, known before. It would be interesting to find a formula for such summands that refines $\Delta'_{e_n} e_{n+k} = \Theta_{e_k} \nabla e_n$ in some way.
Indeed, \Cref{thm:q=1} has the same combinatorial flavor as \cite{HicksRomero2018Polyominoes}*{Theorem~1.1} (cf.\ \cite{IraciRomero2022DeltaTheta}*{Theorem~8.6}), as they both split a positive generating function into non-positive pieces that become positive and combinatorial when specializing one variable to $1$.

\subsection{Theta operators}
\label{sec:theta-operators}

Another attempt at finding a decorated extension of the rational shuffle theorem is \cite{IraciPagariaPaoliniVandenWyngaerd2023}*{Conjecture~4.4}, which states the following.

\begin{conjecture}
    For any $m, n \in \mathbb{N}$, we have \[ \left. \Theta_{e_k} e_{m,n} \right\rvert_{q=1} = \sum_{\pi \in \LRD(m+k,n+k)^{\ast k}} t^{\area(\pi)} x^\pi, \]
    where $\LRD(m+k,n+k)^{\ast k}$ denotes the set of rise-decorated labeled rectangular Dyck paths of size $(m+k) \times (n+k)$.
\end{conjecture}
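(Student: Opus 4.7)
The plan is to adapt the strategy of \Cref{thm:main} to the rise-decorated setting, with the specialization $q=1$ entering at a specific algebraic step where the two settings genuinely diverge.

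First, one would establish a symmetric function identity playing the role that the rational shuffle theorem plays in the proof of \Cref{thm:main}. A natural candidate, obtained by swapping the roles of $m$ and $n$, is
\begin{equation*}
    \left.\Theta_{e_k}\, e_{m,n}\right\rvert_{q=1} = \left.s^\perp_{(n-1)^k}\, e_{n,m+kn}\right\rvert_{q=1}.
\end{equation*}
To prove it, I would expand both sides in the modified Macdonald eigenbasis: at $q=1$ these specialize to Hall--Littlewood polynomials, so the operator $\Theta_{e_k} = \mathbf{\Pi}\, e_k[X/M]\, \mathbf{\Pi}^{-1}$ becomes tractable despite the factor $M = (1-q)(1-t)$ vanishing in the denominator. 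This is precisely where $q=1$ enters essentially: away from $q=1$ the two sides are genuinely different symmetric functions. On the right, one would apply \Cref{prop:schur-expansion} together with the shuffle-algebra presentation of $e_{n,m+kn}$ via the $Q_{a,b}$ operators, and (if needed) pass through the $D_\alpha$ reformulation of \cite{BlasiakHaimanMorsePunSeelinger2023ShuffleAnyLine} to exploit the combinatorial transparency provided by \Cref{thm:q=1}.

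Second, one would develop the rise-decorated combinatorial machinery paralleling \Cref{sec:combinatorial-arguments,sec:dinv}. This means constructing a rise analog of the shape bijection $\psi$ of \Cref{def:shape-bijection}, starting from a space of labeled rectangular paths dual to $\LRP(m,n+km)$ (where labels may need to be placed on horizontal rather than vertical steps to match the dual structure, or equivalently, the problem is transposed before starting). In place of the ENS representation of \Cref{ssc:bijection}, one would use a representation in which decorated vertical steps are replaced by decorated West steps. A rise analog of the dinv correction of \Cref{def:cdinv,def:cdinv_alt} must then be defined; the analog of the sign-reversing involution of \Cref{ssc:involution} must be executed on the resulting rise-labelings; and finally the dinv of each fixed point must be matched with that of the corresponding element of $\LRD(m+k,n+k)^{\ast k}$, mirroring \Cref{sec:dinv}. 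Feeding this into the pipeline of \Cref{sec:proof-main} would complete the proof.

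The main obstacle is the symmetric function identity in the first step. The combinatorial scaffolding is a largely mechanical transposition of the fall case, but the identity above is not currently available in the literature and likely requires new algebraic input. A secondary delicate point is matching the rise cdinv with the fixed-point contribution of the involution: since the rise broken diagonal bends in different columns than its fall counterpart, the four auxiliary sets $D_\pm, D_\pm^\ast$ of \Cref{def:cdinv} have to be re-derived from scratch, and their collective behavior is precisely what one expects to fail away from $q=1$. If this direct adaptation proves intractable, an alternative route is to establish a compositional refinement of the rise Theta conjecture, in the spirit of \cite{DAdderioIraciVandenWyngaerd2021ThetaOperators}, and then specialize to $q=1$.
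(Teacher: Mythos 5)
This statement is not proved in the paper: it is an open conjecture (Conjecture~4.4 of \cite{IraciPagariaPaoliniVandenWyngaerd2023}, restated in \Cref{sec:theta-operators}), and the paper explicitly records that the problem of finding a $\dinv$ statistic for rise-decorated paths remains open. So there is no proof of the paper's to compare yours against, and your proposal is a research plan with self-acknowledged gaps rather than a proof. More importantly, its first step fails outright.

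The identity you propose as the algebraic engine, $\left.\Theta_{e_k} e_{m,n}\right\rvert_{q=1} = \left.s^\perp_{(n-1)^k} e_{n,m+kn}\right\rvert_{q=1}$, is false for $m \neq n$ for degree reasons. By the rational shuffle theorem, $e_{m,n}$ is homogeneous of degree $n$ (its monomials record the $n$ vertical steps of an $m \times n$ path), so $\Theta_{e_k} e_{m,n} \in \Lambda^{(n+k)}$. On the other hand, $e_{n,m+kn} \in \Lambda^{(m+kn)}$ and skewing by $s_{(n-1)^k}$ drops the degree by $k(n-1)$, leaving an element of $\Lambda^{(m+k)}$. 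The two sides live in different graded pieces, so they cannot agree unless $m = n$. The obstruction is exactly the one the paper emphasizes: the antidiagonal reflection matching rise-decorated $(m+k)\times(n+k)$ shapes with fall-decorated $(n+k)\times(m+k)$ shapes swaps vertical and horizontal steps and therefore does not transport labelings; the labeled generating functions are genuinely different symmetric functions (of different degrees), and only their Schr\"oder-type scalar products are conjecturally related, via $\< e_{m,n+km}, s_{(m-1)^k} h_d e_{n+k-d}\> = \<\Theta_{e_k} e_{n,m}, h_d e_{m+k-d}\>$ in \Cref{sec:theta-operators}. A secondary point: since the target identity is stated only at $q=1$, rebuilding the full $\cdinv$ and sign-reversing-involution machinery for rises is more than is needed (no $\dinv$ appears on the combinatorial side), but it also sidesteps the real difficulty, namely that the skewing-plus-pruning technique of \Cref{ssc:bijection} inherently produces fall decorations, and no labeled bijection onto rise-decorated rectangular paths is currently known when $m \neq n$.
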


The main difference from \Cref{thm:main} is that the decorations are on the rises rather than the falls. The problem of finding a $\dinv$ statistic for rise-decorated paths remains open.
While the distinction is almost irrelevant when $m=n$, when $m \neq n$ we get really different sets of objects. However, the distinction disappears again when looking at non-labeled paths, or even Schr\"oder paths, as in both cases the reflection with respect to the antidiagonal yields a bijection between the sets of objects.

It is known that, in order to extract Schr\"oder paths, one has to take the scalar product with $h_d e_{n+k-d}$, where $d$ is the number of diagonal steps. In terms of labelings, taking such a scalar product corresponds to picking $d$ peaks of the path labeled with a maximal label, and then labeling the rest of the path in increasing order with respect to the distance from the diagonal (taken vertically or horizontally depending on the set of objects involved). In particular, this suggests the following symmetric function identity, backed up by computer experiments.

\begin{conjecture}
    For $m, n, k, d \in \mathbb{N}$ with $m > 0$ and $d \leq \min\{m+k, n+k\}$, we have \[ \< e_{m,n+km}, s_{(m-1)^k} h_d e_{n+k-d} \> = \< \Theta_{e_k} e_{n,m}, h_d e_{m+k-d} \>. \]
\end{conjecture}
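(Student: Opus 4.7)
The plan is to match both sides as generating functions over a common Schröder-type combinatorial family, using the antidiagonal reflection to swap the roles of $m$ and $n$. By adjointness and \Cref{thm:main}, the left-hand side equals
\begin{equation*}
    \Bigl\langle \sum_{\pi \in \LRD(m+k,n+k)_{\ast k}} q^{\dinv(\pi)} t^{\area(\pi)} x^\pi, \; h_d e_{n+k-d} \Bigr\rangle.
\end{equation*}
Pairing this LLT-positive symmetric function with $h_d e_{n+k-d}$ extracts the ``Schröder'' subsum in which $d$ vertical steps share a single maximal label while the remaining $n+k-d$ vertical steps carry pairwise distinct labels; geometrically, these are fall-decorated rectangular Schröder paths of size $(m+k)\times(n+k)$ with $k$ decorated falls and $d$ diagonal steps at certain tops of vertical runs.

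For the right-hand side, I would apply the analogous move after establishing a $q,t$-combinatorial formula for $\langle \Theta_{e_k} e_{n,m}, h_d e_{m+k-d}\rangle$ as a generating function over rise-decorated rectangular Schröder paths of size $(n+k)\times(m+k)$ with $k$ decorated rises and $d$ diagonal steps. The bridge between the two sides would be the reflection across the antidiagonal already employed earlier in the paper to match fall- and rise-decorated rectangular Dyck paths: one checks that this reflection sends Schröder fall-decorated paths of size $(m+k)\times(n+k)$ bijectively to Schröder rise-decorated paths of size $(n+k)\times(m+k)$ while preserving area and transporting the fall-dinv to the desired statistic on the rise side.

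The main obstacle, openly acknowledged at the beginning of \Cref{sec:theta-operators}, is that no $\dinv$ statistic for rise-decorated rectangular paths is known in general, so a combinatorial interpretation of $\Theta_{e_k} e_{n,m}$ at the $q,t$-level is itself open. Two workarounds seem worth pursuing. First, one may try to bypass the rise-decorated combinatorics altogether and prove the identity algebraically, expanding
\begin{equation*}
    \Theta_{e_k} e_{n,m} = \mathbf{\Pi}\, e_k[X/M]\, \mathbf{\Pi}^{-1} e_{n,m}
\end{equation*}
via the $\Ht_\mu$-expansion of $e_{n,m}$ supplied by \cite{Mellit2021Rational} and the explicit eigenvalues of $\mathbf{\Pi}$, then matching the resulting $q,t$-expression against the LHS, which by \Cref{thm:main} and \Cref{prop:schur-expansion} is a signed combination of rectangular LLT generating functions. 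Second, one may induct on $k$, using the base case $k=0$ (which amounts to the $(m,n) \leftrightarrow (n,m)$ symmetry of the rectangular $q,t$-Schröder polynomial, a direct consequence of Mellit's theorem) and peeling off one decoration at a time via Theta-of-Theta identities from \cite{DAdderioIraciVandenWyngaerd2021ThetaOperators}.

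The hardest point in either route is to verify that a suitable compositional or Theta-refinement of the LHS is compatible with the corresponding refinement of the Theta-side; such a refinement of \Cref{thm:main} is not developed in the present paper, and producing it (or equivalently, identifying the correct rise-dinv intrinsically rather than through the reflection) is likely where the real work lies.
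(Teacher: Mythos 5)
This statement is an open conjecture in the paper: the authors support it only by computer experiments and explicitly remark that their own attempts to exploit it (to extract a rise-dinv) ``have not been successful.'' There is therefore no proof in the paper to compare against, and your proposal, by its own admission, is a research plan rather than a proof. The left-hand side is indeed under control: by adjointness and \Cref{thm:main} it is the pairing of the fall-decorated generating function with $h_d e_{n+k-d}$, i.e.\ a Schr\"oder-type subsum, and the paper's remark that the antidiagonal reflection identifies fall-decorated and rise-decorated \emph{unlabeled} (hence Schr\"oder) rectangular Dyck paths gives a bijection of underlying objects. But the entire content of the identity lies in the right-hand side: there is no known $q,t$-combinatorial formula for $\langle \Theta_{e_k} e_{n,m}, h_d e_{m+k-d}\rangle$ in terms of rise-decorated Schr\"oder paths, and no known statistic on the rise side that the reflection transports the fall-dinv onto. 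That is precisely the open problem, and your argument assumes it rather than proving it.

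Neither of your two workarounds closes this gap. The algebraic route requires matching the $\Ht_\mu$-expansion of $\Theta_{e_k} e_{n,m}$ (whose coefficients involve the $\Pi_\mu$ eigenvalues and the plethystic multiplication $e_k[X/M]$, which is not diagonal in the $\Ht_\mu$ basis) against the signed LLT combination coming from $s_{(m-1)^k}^\perp e_{m,n+km}$; no mechanism for this matching is proposed, and this is essentially the same obstruction that keeps the Delta square conjecture open. The induction-on-$k$ route also has a structural problem beyond being unexecuted: while $\Theta_f \Theta_g = \Theta_{fg}$, one has $e_k \neq e_1 e_{k-1}$, so ``peeling off one decoration'' forces you through signed Newton-type expansions of $e_k$, and on the combinatorial side there is no compositional refinement of \Cref{thm:main} in the paper to induct with. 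In short, you have correctly located where the difficulty sits, but the statement remains unproven both in your proposal and in the paper itself.
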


In principle, this identity could be exploited to find a $\dinv$ statistic on rise-decorated rectangular Dyck paths. While our first attempts have not been successful, this warrants further investigation.

\subsection{The Delta square conjecture}

In \cite{DAdderioIraciVandenWyngaerd2019DeltaSquareConjecture}, the authors conjecture a decorated version of the square paths theorem, in terms of rise-decorated square paths. We refer to \cites{DAdderioIraciVandenWyngaerd2019DeltaSquareConjecture,IraciVandenWyngaerd2021ValleySquare} for all missing definitions. The statement is the following.

\begin{conjecture}[{\cite{DAdderioIraciVandenWyngaerd2019DeltaSquareConjecture}*{Conjecture~3.12}}]
    For $n, k \in \mathbb{N}$, we have
    \[ \frac{[n]_t}{[n+k]_t} \Delta_{e_{n}} \omega(p_{n+k}) = \sum_{\pi \in \LRP(n+k,n+k)^{\ast k}} q^{\dinv(\pi)} t^{\area(\pi)} x^\pi, \]
    where $\LRP(n+k,n+k)^{\ast k}$ denotes the set of rise-decorated labeled square paths of size $n+k$.
\end{conjecture}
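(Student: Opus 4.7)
The plan is to derive this conjecture from \Cref{thm:rectangular} (the conditional fall-decorated rectangular shuffle theorem) by specializing to $m = n$ and transferring the resulting identity from fall-decorated to rise-decorated square paths via the antidiagonal reflection.

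On the symmetric function side, setting $m = n$ in \Cref{thm:rectangular} gives $d = \gcd(n,n) = n$, so $[m]_q/[d]_q = 1$, and (assuming \cite{IraciPagariaPaoliniVandenWyngaerd2023}*{Conjecture~4.2}) we get
\[
    s_{(n-1)^k}^{\perp}\, p_{n,\, n+kn} \;=\; \sum_{\pi \in \LRP(n+k, n+k)_{\ast k}} q^{\dinv(\pi)} t^{\area(\pi)} x^\pi.
\]
The next task is to prove the symmetric-function identity
\[
    s_{(n-1)^k}^{\perp}\, p_{n,\, n+kn} \;=\; \frac{[n]_t}{[n+k]_t}\,\Delta_{e_n}\,\omega(p_{n+k}).
\]
I would expand the left-hand side via $p_{n, n+kn} = F_{1, 1+k}(p_n)$ using the definition in \Cref{sec:symmetric-functions}. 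The slope is the integer $1 + k$, so the iterated operator $Q_{1, 1+k}$ is comparatively tractable; the plan is to reduce the expression to $\Delta$-type operators acting on $\omega(p_{n+k})$, with the $[n]_t/[n+k]_t$ prefactor emerging from normalizations in the elliptic Hall algebra. As a sanity check, the case $k = 0$ recovers the Loehr--Warrington identity $p_{n,n} = \nabla \omega(p_n)$, since $\Delta_{e_n}$ acts as $\nabla$ in degree $n$.

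For the combinatorial bridge, the paper notes (in the discussion in \Cref{sec:theta-operators}) that when $m = n$ the antidiagonal reflection induces a bijection between $\LRP(n+k, n+k)_{\ast k}$ and $\LRP(n+k, n+k)^{\ast k}$; the reflection preserves area in the square case and transports labels in the obvious way. The strategy is to \emph{define} the rise-dinv of a rise-decorated square path as the fall-dinv of its antidiagonal reflection, making the generating-function equality automatic. The residual task is to verify that this transported statistic coincides with the rise-dinv intended by the Delta square conjecture; one would check this on the base cases $k = 0$ (recovering Loehr--Warrington) and paths above the main diagonal (recovering the rise Delta theorem of \cite{DAdderioMellit2022CompositionalDelta}), and then try to propagate agreement to the general case.

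The main obstacle is twofold. First, the argument is conditional on \cite{IraciPagariaPaoliniVandenWyngaerd2023}*{Conjecture~4.2}, which is open except when $\gcd(m,n) = 1$; the case $m = n$ has $d = n > 1$, so one would need to establish that conjecture at the specific integer slope $(n, n+kn)$ as a prerequisite. More seriously, the combinatorial step only sidesteps rather than solves the central question explicitly flagged in \Cref{sec:theta-operators}: to give an intrinsic, manifestly well-behaved rise-dinv statistic on rectangular Dyck paths. Reconciling the reflected fall-dinv with the intended intrinsic rise-dinv is effectively the heart of the Delta square conjecture itself, and is the reason it has remained open since \cite{DAdderioIraciVandenWyngaerd2019DeltaSquareConjecture}; it is likely to require genuinely new combinatorial ideas beyond the machinery developed in this paper.
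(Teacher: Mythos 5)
This statement is not proved in the paper at all: it is quoted verbatim as \cite{DAdderioIraciVandenWyngaerd2019DeltaSquareConjecture}*{Conjecture~3.12}, and the introduction explicitly states that the Delta square conjecture ``is still open today.'' The paper's contribution in \Cref{conj:fall-square} is a \emph{fall} version that it conjectures to be the correct reformulation, precisely because the rise version resists the techniques developed here. So there is no paper proof to compare against, and your proposal should be assessed as a standalone attempt — which, to your credit, you partially flag yourself.

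Two concrete steps in your sketch fail. First, the antidiagonal reflection does \emph{not} give a bijection between $\LRP(n+k,n+k)_{\ast k}$ and $\LRP(n+k,n+k)^{\ast k}$. The remark you are recalling applies only to rectangular \emph{Dyck} paths; for general square paths the paper says the opposite in \Cref{sec:links}: ``even when $m = n$, the combinatorial symmetry of rises and falls does not hold for generic rectangular paths, because of the condition of ending with a horizontal step.'' Reflecting a path that ends with a horizontal step produces one ending with a horizontal step only if the original started with a vertical step, which square paths below the diagonal need not do. Second, your target identity $s_{(n-1)^k}^{\perp} p_{n,\,n+kn} = \frac{[n]_t}{[n+k]_t}\Delta_{e_n}\omega(p_{n+k})$ is off by a factor of $\frac{[n+k]_q}{[n]_q}$: the paper's own (conjectural) identification is $s_{(n-1)^k}^\perp p_{n,n(k+1)} = \Theta_{e_k}\nabla\omega(p_n)$, whereas $\frac{[n]_t}{[n+k]_t}\Delta_{e_n}\omega(p_{n+k}) = \frac{[n+k]_q}{[n]_q}\Theta_{e_k}\nabla\omega(p_n)$. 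These two failures are consistent with each other — the missing $q$-factor on the symmetric function side mirrors the failure of the reflection on the combinatorial side — and together they are exactly why the paper proposes decorating falls instead of rises. Your route therefore cannot close, even granting \cite{IraciPagariaPaoliniVandenWyngaerd2023}*{Conjecture~4.2}.
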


Notice the extra factor $[n]_t/[n+k]_t$ in the expression. In \cite{IraciVandenWyngaerd2021ValleySquare}, the authors partly address this issue. Via the identity \[ \frac{[n]_t}{[n+k]_t} \Delta_{e_{n}} \omega(p_{n+k}) = \frac{[n+k]_q}{[n]_q} \Theta_{e_k} \nabla \omega(p_n) \] and the $q,t$-reversed version \[ \frac{[n]_q}{[n+k]_q} \Delta_{e_{n}} \omega(p_{n+k}) = \frac{[n+k]_t}{[n]_t} \Theta_{e_k} \nabla \omega(p_n), \] the authors formulate a variant of the conjecture in terms of valley-decorated square paths.

A contractible valley of a labeled rectangular path is a vertical step that is preceded by either two horizontal steps, or one horizontal step such that the label of the vertical step immediately before it is strictly smaller than the label of the vertical step immediately after it. These decorations first appeared in the \emph{valley version} of the Delta conjecture \cite{HaglundRemmelWilson2018DeltaConjecture}. The authors give the following two conjectures.

\begin{conjecture}[{\cite{IraciVandenWyngaerd2021ValleySquare}*{Conjecture~5}}]
    For $n, k \in \mathbb{N}$, we have
    \[ \frac{[n]_t}{[n+k]_t} \Theta_{e_k} \nabla \omega(p_n) = \sum_{\pi \in \LRP(n+k, n+k)^{\bullet k}} q^{\dinv(\pi)} t^{\area(\pi)} x^\pi, \]
    where $\LRP(n+k, n+k)^{\bullet k}$ denotes the set of valley-decorated labeled square paths of size $n+k$.
\end{conjecture}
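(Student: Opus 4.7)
The plan is to adapt the Gillespie--Gorsky--Griffin strategy used in Sections~\ref{sec:combinatorial-arguments}--\ref{sec:proof-main} of the present paper, but with substantial modifications to accommodate valley decorations, which (unlike falls) depend on the labelings of adjacent vertical steps. The approach proceeds in four stages.

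\textbf{Stage 1 (symmetric function reformulation).} Rewrite $\frac{[n]_t}{[n+k]_t} \Theta_{e_k} \nabla \omega(p_n)$ as $g^\perp f$ for explicit $g, f \in \Lambda$, where $f$ admits a known combinatorial interpretation. Two natural candidates for $f$ are $\nabla \omega(p_{n+k})$ (via the square paths theorem) or $p_{n+k,n+k}$ (via the rectangular shuffle conjecture, available for the square $m=n$ case). The rational factor $\frac{[n]_t}{[n+k]_t}$ should be absorbed by combining the Theta-operator identities of \cite{DAdderioIraciVandenWyngaerd2021ThetaOperators} with plethystic manipulations analogous to those underlying the $q,t$-swapped identity $\frac{[n]_q}{[n+k]_q} \Delta_{e_n} \omega(p_{n+k}) = \frac{[n+k]_t}{[n]_t} \Theta_{e_k} \nabla \omega(p_n)$ mentioned in Section~\ref{sec:theta-operators}.

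\textbf{Stage 2 (combinatorial interpretation of the skewing).} Following Subsection~\ref{ssc:skewing}, expand $g$ in the homogeneous basis using a Jacobi--Trudi-type identity and interpret the resulting skewing via labelings in the extended alphabet $\mathbb{Z}_+ \cup \overline{\mathbb{Z}}_+$. This yields a signed sum over labeled square paths with ``big'' labels appearing in controlled patterns, mirroring the role of allowable compositions in \Cref{prop:schur-expansion}.

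\textbf{Stage 3 (bijection and involution).} Adapt the ENS representation and the pruning map $\psi$ of \Cref{def:shape-bijection} to the valley setting. Since a valley decoration requires either two preceding horizontal steps or a specific labeling inequality on the flanking vertical steps, the inserted big labels must encode not only the horizontal position but also the relative order of the neighboring vertical labels. Once such a bijection is set up, construct a sign-reversing involution analogous to \Cref{thm:involution} whose unique fixed point produces the ``canonical'' valley labeling. Finally, establish a dinv-preservation theorem mirroring \Cref{thm:bijection-dinv}, using the valley analogue of $\cdinv$ to be defined.

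\textbf{Main obstacle.} The serious difficulty lies in Stage~1: the factor $\frac{[n]_t}{[n+k]_t}$ is \emph{not} a skewing operator in any standard sense, and no natural plethystic identity is known that rewrites it as $g^\perp$ acting on a symmetric function with a positive path model. A secondary obstacle, in Stage~3, is that valley decorations are \emph{label-sensitive}: moving a valley decoration across a vertical step can create or destroy other valleys, breaking the clean locality enjoyed by fall decorations. This means the pruning bijection $\psi$ cannot be directly transplanted; one likely needs a labeling-aware variant, perhaps mediated by the reflection-antidiagonal bijection between valleys and rises that is well-behaved in the square case. Combined with the still-open compositional valley Delta conjecture, these obstructions suggest that a direct adaptation of the present paper's machinery will not be sufficient, and that progress on this conjecture will require a genuinely new symmetric-function identity in Stage~1.
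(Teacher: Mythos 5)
There is no proof here to check against the paper, because the paper does not prove this statement either: it is an open conjecture (Conjecture~5 of \cite{IraciVandenWyngaerd2021ValleySquare}), quoted in \Cref{sec:links} purely as context, and the introduction explicitly records that the valley version of this circle of ideas ``lacks a compositional version and it is still open.'' Your proposal is a research plan rather than an argument: Stage~1 never produces the explicit $g$ and $f$ with $g^\perp f = \frac{[n]_t}{[n+k]_t}\Theta_{e_k}\nabla\omega(p_n)$, Stage~3 never defines the valley analogue of $\cdinv$ or the modified pruning map, and you yourself conclude that the obstructions make a direct adaptation insufficient. A sketch that ends by explaining why it cannot be completed is not a proof, and you should not present it as one.

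That said, your diagnosis of the obstacles is sound and consistent with how the paper positions this conjecture. The factor $\frac{[n]_t}{[n+k]_t}$ genuinely resists a skewing interpretation, which is precisely why the paper's own contribution (\Cref{conj:fall-square} and \Cref{thm:main}) pivots to \emph{fall} decorations, where $s_{(m-1)^k}^\perp$ applied to $e_{m,n+km}$ or $p_{m,n+km}$ gives a clean operator-level statement with no rational prefactor. Your observation that valley decorations are label-sensitive and break the locality of the pruning bijection $\psi$ is also the correct reason why the Gillespie--Gorsky--Griffin machinery does not transfer. If you want to salvage something from this direction, the honest framing is: the method of this paper suggests replacing the valley-decorated conjecture by a fall-decorated one (as the paper does in \Cref{conj:fall-square}), rather than attempting to prove the valley version as stated.
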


\begin{conjecture}[{\cite{IraciVandenWyngaerd2021ValleySquare}*{Conjecture~8}}]
    \label{conj:valley-square}
    For $n, k \in \mathbb{N}$, we have
    \[ \Theta_{e_k} \nabla \omega(p_n) = \sum_{\pi \in \LRP'(n+k, n+k)^{\bullet k}} q^{\dinv(\pi)} t^{\area(\pi)} x^\pi, \]
    where $\LRP'(n+k, n+k)^{\bullet k}$ is the subset of $\LRP(n+k, n+k)^{\bullet k}$ where the bottom-most vertical step of the path among those lying on the base diagonal is not decorated.
\end{conjecture}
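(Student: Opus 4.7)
The plan is to adapt the Gillespie--Gorsky--Griffin framework developed in \Cref{sec:combinatorial-arguments} and \Cref{sec:dinv} to the valley-decorated square-path setting. The strategy has three ingredients: first, express $\Theta_{e_k}\nabla\omega(p_n)$ as a skewing operator applied to a symmetric function whose combinatorics is controlled by the square paths theorem of Loehr--Warrington/Sergel; second, develop an analog of the bijection $\psi$ of \Cref{def:shape-bijection} that maps small-big labeled square paths to valley-decorated square paths with the $\LRP'$ restriction; third, construct a sign-reversing involution playing the role of $\varphi$, whose fixed points correspond to a canonical valley-labeling.

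For the symmetric function step, a natural starting point is the identity $\tfrac{[n+k]_q}{[n]_q}\Theta_{e_k}\nabla\omega(p_n) = \tfrac{[n]_t}{[n+k]_t}\Delta_{e_n}\omega(p_{n+k})$ mentioned in \Cref{sec:theta-operators}. Combined with the square paths theorem expansion of $\nabla\omega(p_n)$, one would seek a presentation analogous to $s^\perp_{(m-1)^k} e_{m,n+km}$ from \Cref{thm:main}, perhaps of the form ``$s_\lambda^\perp$ applied to a scaled $\omega(p_{n+kn})$''. The restriction defining $\LRP'$ (no decoration on the bottom-most vertical step lying on the base diagonal) should emerge naturally from the end-with-an-East-step condition built into the square paths theorem, just as the ENS representation of \Cref{ssc:bijection} handles the analogous condition in the rectangular Dyck case. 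The corresponding bijection would, for each horizontal step, mark a position where a valley is to be inserted with an appropriate label, replacing the ``decorated South step'' substitution of \Cref{ssc:bijection} with a label-sensitive alternative.

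The main obstacle, and the likely reason the conjecture has remained open despite the resolution of the rise version, is the bijection itself. A decorated fall is a purely topological feature of the unlabeled path, so the map $\psi$ cleanly separates its topological action (prune the big-labeled vertical steps, insert decorated falls) from the labeling bookkeeping (the involution $\varphi$ acts only on the fall-labeling); this separation is precisely what makes \Cref{thm:main} and \Cref{thm:bijection-dinv} tractable. A contractible valley, by contrast, is defined by a strict inequality between the labels of the vertical steps flanking a horizontal step, so any analog of $\psi$ must interleave topology and labeling. A promising strategy is to set up the small-big alphabet so that the label-comparison defining a valley is \emph{forced} by the strict-increase labeling rule on consecutive vertical steps in the intermediate object. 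Verifying the dinv-preservation analog of \Cref{thm:bijection-dinv} would then require rebuilding the $\cdinv$ accounting of \Cref{sec:dinv} with a new collection of geometric pairings adapted to valleys rather than falls, which I expect to be the most technically demanding portion of the argument.
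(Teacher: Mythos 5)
This statement is not proved in the paper at all: it is an open conjecture, quoted verbatim from the literature (Conjecture~8 of the valley Delta square paper), and the present paper explicitly states in the introduction that the valley version ``lacks a compositional version and is still open.'' Indeed, the authors' own contribution here is to sidestep valleys entirely and propose the fall version (\Cref{conj:fall-square}) precisely because the machinery of \Cref{sec:combinatorial-arguments} requires decorations that are features of the \emph{unlabeled} path. So there is no paper proof to compare against, and your proposal must be judged on whether it constitutes a proof on its own. It does not.

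Concretely, every load-bearing step in your outline is left unexecuted. (1) You never establish a symmetric-function identity of the form $\Theta_{e_k}\nabla\omega(p_n) = s_\lambda^\perp(\cdot)$ with a combinatorially known right-hand side; you only say one ``would seek'' such a presentation. Without it, the skewing/involution strategy cannot even begin, and it is not clear such an identity exists --- the paper's final conjecture, $\Theta_{e_k}\nabla\omega(p_n) = s_{(n-1)^k}^\perp p_{n,n(k+1)}$, is itself conjectural and in any case feeds into the \emph{fall}-decorated combinatorics, not the valley-decorated combinatorics of the statement at hand. (2) The analog of the bijection $\psi$ is never defined. You correctly identify the obstruction --- a contractible valley is determined by a label inequality, so the clean separation between the topological pruning of big-labeled steps and the fall-labeling bookkeeping breaks down --- but identifying an obstruction is not the same as overcoming it; ``a promising strategy is to set up the small-big alphabet so that the comparison is forced'' is a hope, not a construction. (3) Consequently the sign-reversing involution, the analog of \Cref{thm:bijection-dinv}, and the emergence of the $\LRP'$ restriction are all asserted as goals rather than proved. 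What you have written is a reasonable research program, and your diagnosis of why the valley case resists the Gillespie--Gorsky--Griffin method is sound, but the statement remains unproven.
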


Notice that \Cref{conj:valley-square} does not have any multiplicative factor. Indeed, even when $m = n$, the combinatorial symmetry of rises and falls does not hold for generic rectangular paths, because of the condition of ending with a horizontal step: since they might start with a horizontal step, the symmetry with respect to the antidiagonal does not preserve the set.

This suggests that decorating rises might not be the correct approach, and we should look at falls instead. Indeed, computer experiments suggest the following.

\begin{conjecture}[Delta square conjecture, fall version]
    \label{conj:fall-square}
    For $n, k \in \mathbb{N}$ with $n > 0$, we have
    \[ \Theta_{e_k} \nabla \omega(p_n) = \sum_{\pi \in \LRP(n+k, n+k)_{\ast k}} q^{\dinv(\pi)} t^{\area(\pi)} x^\pi. \]    
\end{conjecture}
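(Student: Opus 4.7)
The first move is to convert \Cref{conj:fall-square} into a pure symmetric-function identity. Specializing \Cref{thm:rectangular} to $m = n$ gives $d = \gcd(n,n) = n$, so $[m]_q/[d]_q = 1$ and $n + km = n(k+1)$. Assuming the rectangular paths conjecture \cite{IraciPagariaPaoliniVandenWyngaerd2023}*{Conjecture~4.2} in the shape $(n, n(k+1))$, the combinatorial right-hand side of \Cref{conj:fall-square} equals $s^\perp_{(n-1)^k}\,p_{n, n(k+1)}$, so proving the conjecture reduces to the identity
\[
    s^\perp_{(n-1)^k}\,p_{n, n(k+1)} \;=\; \Theta_{e_k}\,\nabla\,\omega(p_n)
    \qquad (\star).
\]
When $k = 0$, identity $(\star)$ reads $p_{n,n} = \nabla\omega(p_n)$, which is the Loehr-Warrington conjecture proved by Sergel \cite{Sergel2017SquarePaths}. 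The task is to extend this to arbitrary $k$.

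The main plan is to mirror the $e$-side argument developed in this paper. Specializing \Cref{thm:main} to $m = n$, applying the reflection bijection between fall- and rise-decorated objects of square shape, invoking the rise Delta theorem of \cite{DAdderioMellit2022CompositionalDelta}, and using the Theta identity $\Delta'_{e_{n-1}} e_{n+k} = \Theta_{e_k}\nabla e_n$ from \cite{DAdderioIraciVandenWyngaerd2021ThetaOperators} yields the companion equality
\[
    s^\perp_{(n-1)^k}\,e_{n, n(k+1)} \;=\; \Theta_{e_k}\,\nabla\,e_n.
\]
Since $e_{n, n(k+1)} = F_{1, k+1}(e_n)$ and $p_{n, n(k+1)} = F_{1, k+1}(p_n)$ by definition, the aim is to transfer the $e$-identity to the $p$-identity by expanding $p_n$ in the twisted $h_\lambda$-basis used to define $F_{a,b}$, applying $F_{1,k+1}$ termwise (so $p_{n,n(k+1)}$ becomes a signed sum of products $\prod_i Q_{\lambda_i,\,\lambda_i(k+1)}(1)$), hitting the result with $s^\perp_{(n-1)^k}$, and pairing it against $\Theta_{e_k}\nabla\omega(p_n)$ inside the Schiffmann-Vasserot elliptic Hall algebra, where $\nabla$ is known to act as the shift $(a,b)\mapsto(a,b+a)$ on the generators $Q_{a,b}(1)$.

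A fallback route is induction on $k$. The base case is Loehr-Warrington-Sergel, and for the inductive step one expands $s_{(n-1)^k}$ via Jacobi-Trudi (as in \Cref{prop:schur-expansion}) to rewrite $s^\perp_{(n-1)^k}$ as a signed combination of sequences of $h^\perp$-operators modulo the ideal generated by $h_j^\perp$ with $j > n$, and matches this against a Pieri-type recursion for $\Theta_{e_k}$. The central obstacle in either route is that $F_{1,k+1}$ is linear but not multiplicative, so passing between the $e_n$-seed and the $p_n$-seed produces commutator corrections among the $Q_{a,b}(1)$ operators; the crux of the argument is to prove that these corrections are supported on $h_j$ with $j > n$ and hence are killed by $s^\perp_{(n-1)^k}$, exactly as in \Cref{prop:schur-expansion}. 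This last step appears to require a genuinely new ingredient beyond the techniques developed in the present paper.
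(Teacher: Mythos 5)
This statement is a conjecture in the paper, supported only by computer experiments; the paper offers no proof of it, and your proposal does not supply one either. Your reduction --- specialize \Cref{thm:rectangular} to $m=n$ to identify the combinatorial side with $s^\perp_{(n-1)^k}p_{n,n(k+1)}$, then try to establish $s^\perp_{(n-1)^k}p_{n,n(k+1)} = \Theta_{e_k}\nabla\omega(p_n)$ --- reproduces exactly the chain of reasoning the paper itself records: your identity $(\star)$ is verbatim the paper's closing conjecture, stated immediately after \Cref{conj:fall-square} and left open. Note also that the first step is doubly conditional: \Cref{thm:rectangular} rests on \cite{IraciPagariaPaoliniVandenWyngaerd2023}*{Conjecture~4.2} for the pair $(n,n(k+1))$, whose gcd is $n$, and that conjecture is only known in the coprime case $d=1$; so for $n>1$ you are already standing on an open statement before you even reach $(\star)$.

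The concrete missing ingredient is the one you flag yourself. When you expand $p_n$ in the twisted $h_\lambda$-basis and apply $F_{1,k+1}$ termwise, the resulting products $\prod_i Q_{\lambda_i,\lambda_i(k+1)}(1)$ differ from the $e_n$-seed by commutators in the elliptic Hall algebra, and you assert without argument that these corrections are ``supported on $h_j$ with $j>n$'' and hence annihilated by $s^\perp_{(n-1)^k}$. Nothing in \Cref{prop:schur-expansion} supports this: that proposition is a Jacobi--Trudi computation in $\Lambda$ describing $s_{(m-1)^k}$ modulo the ideal generated by $h_j$ for $j>m$, and it says nothing about where the $Q_{a,b}$-commutators land under the skewing operator. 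The fallback induction on $k$ has the same hole at the same place. What you have is a correct restatement of the reduction already present in the paper, plus an unproven claim at the crux; it is a plan of attack, not a proof.
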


Since we already have a formula for the right-hand side of \Cref{conj:fall-square}, we claim the following.

\begin{conjecture}
    For $n, k \in \mathbb{N}$, we have
    \[ \Theta_{e_k} \nabla \omega(p_n) = s_{(n-1)^k}^\perp p_{n,n(k+1)}. \]    
\end{conjecture}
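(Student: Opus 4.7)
The identity
\[ \Theta_{e_k} \nabla \omega(p_n) = s_{(n-1)^k}^\perp p_{n, n(k+1)} \]
is a purely symmetric function statement of degree $n+k$. Before attempting a direct proof, I note that if \Cref{conj:fall-square} and the rectangular paths conjecture of \cite{IraciPagariaPaoliniVandenWyngaerd2023}*{Conjecture~4.2} were both established, then the identity would follow immediately: by \Cref{thm:rectangular} specialized at $m = n$, where $d = \gcd(n,n) = n$ so that the prefactor $[n]_q/[n]_q$ trivializes, both sides would equal the generating function $\sum_{\pi \in \LRP(n+k,n+k)_{\ast k}} q^{\dinv(\pi)} t^{\area(\pi)} x^\pi$. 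Since both combinatorial statements remain open, a direct symmetric function proof is desirable.

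My plan is to unfold both sides through their plethystic definitions. First, apply \Cref{prop:schur-expansion} to write $s_{(n-1)^k}^\perp p_{n,n(k+1)} = \sum_\alpha (-1)^{\sgn(\alpha)} h_{\tilde\alpha}^\perp p_{n,n(k+1)}$, where the modular reduction by $h_j$ with $j > n$ should be harmless provided that $h_j^\perp p_{n,n(k+1)} = 0$ for $j > n$, which one expects to follow from the rectangular structure analogously to the $e_{m, n+km}$ case used in the proof of \Cref{thm:main}. Then unfold $p_{n,n(k+1)} = F_{1,k+1}(p_n) = \sum_{\lambda \vdash n} c_\lambda(q,t) \prod_i Q_{\lambda_i,\, \lambda_i(k+1)}(1)$ using the definitions in \Cref{sec:symmetric-functions}, obtaining a signed sum of $h_{\tilde\alpha}^\perp$ applied to products of shuffle-algebra generators. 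In parallel, expand the left-hand side by writing $p_n$ in the modified Macdonald basis (it is supported on hook partitions, with explicit coefficients), applying $\nabla$ via the eigenvalue $e_n[B_\mu]$ on $\Ht_\mu$, and finally applying $\Theta_{e_k}$ through the formula $\Theta_{e_k}(F) = \mathbf{\Pi} e_k[X/M] \mathbf{\Pi}^{-1}(F)$.

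The main obstacle is matching these two presentations: the right-hand side naturally belongs to the shuffle / elliptic Hall algebra picture, while the left-hand side lives most comfortably in the Macdonald basis, and a clean translation between them is subtle. As a first step, I would verify the base case $k=0$, which reduces to the identity $\nabla \omega(p_n) = p_{n,n}$; this follows from Sergel's square paths theorem combined with the square case of the rectangular paths conjecture, both of which amount to the same combinatorial generating function over labeled square paths. For the inductive step on $k$, a promising direction would be to invoke the $D_\alpha$ operator framework of \cite{BlasiakHaimanMorsePunSeelinger2023ShuffleAnyLine} discussed in \Cref{sec:links}, which provides a unified language for both the shuffle algebra elements $Q_{m,n}$ and the Theta operators $\Theta_{e_k}$. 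Given the depth of known identities in the elliptic Hall algebra, a complete proof is likely to require a new symmetric function identity interpolating between the two presentations.
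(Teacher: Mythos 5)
This statement appears in the paper only as a \emph{conjecture}, and the paper offers no proof of it; the entire content of the paper's ``argument'' is the single sentence preceding the conjecture (``Since we already have a formula for the right-hand side of \Cref{conj:fall-square}, we also conjecture the following''), i.e.\ the observation that the stated identity is implied by \Cref{conj:fall-square} together with \Cref{thm:rectangular} at $m=n$, which in turn rests on \cite{IraciPagariaPaoliniVandenWyngaerd2023}*{Conjecture~4.2}. Your first paragraph reconstructs exactly this chain of implications, including the correct specializations $d=\gcd(n,n)=n$, $[n]_q/[n]_q=1$, and $n+kn=n(k+1)$, so on the one point where the paper actually has a thought to convey, you match it.

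The remainder of your proposal is a speculative roadmap toward a direct symmetric-function proof, and you are candid that it does not close. That is the right posture for an open conjecture, but two caveats are worth flagging. First, your claimed base case $k=0$, namely $\nabla\omega(p_n)=p_{n,n}$, is not an unconditional fact the way you present it: you derive it by combining Sergel's square paths theorem with the square case of the rectangular paths conjecture, but the latter is precisely \cite{IraciPagariaPaoliniVandenWyngaerd2023}*{Conjecture~4.2} at $m=n$, which has $d=n>1$ and is therefore not covered by that paper's Theorem~6.1 (proved only for $d=1$). So your ``verified base case'' is itself conditional on the same open conjecture as the general statement, and an induction on $k$ starting from it would not produce an unconditional proof. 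Second, the vanishing $h_j^\perp p_{n,n(k+1)}=0$ for $j>n$ that you need to make the modular reduction in \Cref{prop:schur-expansion} harmless is, as you note, an expectation rather than a proved lemma; it too would follow from the combinatorial interpretation of $p_{n,n(k+1)}$, i.e.\ again from Conjecture~4.2. In short, both your conditional route and your base-case check bottom out at the same open conjecture the paper's own remark invokes, so nothing here advances beyond the paper's (non-)proof; the genuinely new symmetric-function identity you correctly predict would be needed is still missing.
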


\section*{Acknowledgements}

We thank Matteo Migliorini for an insight that led us to the ENS representation.
We also thank the anonymous referees for their useful suggestions.
Finally, we acknowledge the support of MUR grant PRIN 2022A7L229 ``Algebraic and topological combinatorics'' CUP-J53D23003660006.
The authors are members of the research group INdAM--GNSAGA.

\bibliographystyle{amsalpha}
\bibliography{references}

\end{document}